\DeclareMathOperator{\Ad}{Ad}
\DeclareMathOperator{\Lie}{Lie}
\DeclareMathOperator{\Aut}{Aut}
\DeclareMathOperator{\Inn}{Inn}
\DeclareMathOperator{\GL}{GL}
\DeclareMathOperator{\SL}{SL}
\DeclareMathOperator{\SO}{SO}
\DeclareMathOperator{\PGL}{PGL}
\DeclareMathOperator{\Mat}{Mat}
\DeclareMathOperator{\Char}{char}
\newcommand{\deq}{\overset{\textup{def}}{=}}
\newcommand{\ssep}{\,:\,}
\newcommand{\Kder}{[K^{\circ},K^{\circ}]}
\newtheorem{theorem}{Theorem}[section]
\newtheorem{lemma}[theorem]{Lemma}
\newtheorem{corollary}[theorem]{Corollary}
\newtheorem{proposition}[theorem]{Proposition}
\newtheorem{maintheorem}{Theorem}
\newtheorem{maincorollary}[maintheorem]{Corollary}
\theoremstyle{definition}
\newtheorem{remark}[theorem]{Remark}
\newtheorem{definition}[theorem]{Definition}
\newtheorem{example}[theorem]{Example}
\newtheorem*{question*}{Question}
\newtheorem*{remark*}{Remark}
\title[Relative c.\ r.\ and normalised subgroups]{Relative complete reducibility and normalised subgroups}
\author{Maike Gruchot, Alastair Litterick, and Gerhard R\"{o}hrle}
\address{Maike Gruchot: Fakult\"{a}t f\"{u}r Mathematik, Ruhr-Universit\"{a}t Bochum, Universit\"{a}tsstra{\ss}e 150, D-44780 Bochum, Germany}
\email{maike.gruchot@rub.de}
\curraddr 
{Lehrstuhl f\"{u}r Algebra und Zahlentheorie, RWTH Aachen University, 
	D-52062 Aachen, Germany}
\email{maike.gruchot@rwth-aachen.de}
\address{Alastair Litterick: Fakult\"{a}t f\"{u}r Mathematik, Ruhr-Universit\"{a}t Bochum, Universit\"{a}tsstra{\ss}e 150, D-44780 Bochum, Germany \and Fakult\"{a}t f\"{u}r Mathematik, Universit\"{a}t Bielefeld, Postfach 100131, D-33501 Bielefeld, Germany}
\email{ajlitterick@gmail.com}
\curraddr
{Department of Mathematical Sciences, University of Essex, Wivenhoe Park, Colchester, Essex CO4 3SQ, United Kingdom}
\email{ajlitterick@essex.ac.uk}
\address{Gerhard R\"{o}hrle: Fakult\"{a}t f\"{u}r Mathematik, Ruhr-Universit\"{a}t Bochum, Universit\"{a}tsstra{\ss}e 150, D-44780 Bochum, Germany}
\email{gerhard.roehrle@rub.de}
\subjclass[2010]{20G15, (14L24)}
\begin{document}

\begin{abstract}
We study a relative variant of Serre's notion of $G$-complete reducibility for a reductive algebraic group $G$. We let $K$ be a reductive subgroup of $G$, and consider subgroups of $G$ which normalise the identity component $K^{\circ}$. We show that such a subgroup is relatively $G$-completely reducible with respect to $K$ if and only if its image in the automorphism group of $K^{\circ}$ is completely reducible. This allows us to generalise a number of fundamental results from the absolute to the relative setting. We also derive analogous results for Lie subalgebras of the Lie algebra of $G$, as well as `rational' versions over non-algebraically closed fields.
\end{abstract}
\maketitle
\tableofcontents

\section{Introduction}

Let $G$ be a (possibly disconnected) reductive algebraic group over an algebraically closed field. In recent years much effort has been devoted to understanding Serre's notion of $G$-complete reducibility \cite{Bate2018,Bate2005,Bate2008,Bate2010,Bate2011,Liebeck1996,Litterick2018,Serre1998,Serre2003-2004,Stewart2013}. This powerful notion links the subgroup structure of $G$ with representation theory (which can be viewed as the special case $G = \GL_{n}$) and with concepts from geometric invariant theory.

In \cite{Bate2011} a relative version of this concept is introduced. If $K$ is a closed reductive subgroup of $G$, then a closed subgroup $H$ of $G$ is called \emph{relatively $G$-completely reducible with respect to $K$} if, whenever $H$ is contained in an R-parabolic subgroup $P_{\lambda}$ for a cocharacter $\lambda$ of $K$, then $H$ is contained in an R-Levi subgroup $L_{\mu}$ for some cocharacter $\mu$ of $K$ with $P_{\lambda} = P_{\mu}$. More detailed definitions are given in Section~\ref{sec:notation}.

Our main result is a direct relation between this relative variant and the usual `absolute' notion of complete reducibility (which is the case $K = G$) when considering subgroups which normalise the identity component $K^{\circ}$ of $K$. This allows us to deduce relative versions of many pivotal theorems directly from their absolute counterparts, which we do in Section~\ref{sec:corollaries}. Throughout, all groups are defined over a fixed field $k$ which, until Section~\ref{sec:rationality}, is taken to be algebraically closed.

\begin{maintheorem} \label{THM:MAIN}
Let $K \le G$ be reductive algebraic groups, write $N = N_{G}(K^{\circ})$, $C = C_{G}(K^{\circ})$, and let $\pi \colon N \to N/C$ be the quotient map. 
Let $H$ be a closed subgroup of $N$. Then $H$ is relatively $G$-completely reducible with respect to $K$ if and only if $\pi(H)$ is $\pi(N)$-completely reducible.
\end{maintheorem}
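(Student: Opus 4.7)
The plan is to translate the relative condition on $H$ into an absolute complete reducibility condition on $\pi(H)$ via a correspondence between R-parabolic and R-Levi subgroups of $G$ coming from cocharacters of $K$, and those of $\pi(N)$. First I would verify that $\pi(N)$ is a reductive algebraic group whose identity component is $\pi(K^{\circ}) \cong K^{\circ}/Z(K^{\circ})$. Cocharacters of $K$ factor through $K^{\circ}$ (as $\mathbb{G}_m$ is connected), and composition with $\pi$ sends each cocharacter $\lambda$ of $K^{\circ}$ to a cocharacter $\pi \circ \lambda$ of $\pi(N)^{\circ}$; conversely, every cocharacter of $\pi(N)^{\circ}$ lifts, after passing to a positive-integer multiple, to a cocharacter of $K^{\circ}$. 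Since $P_\lambda$ and $L_\lambda$ are unchanged by replacing $\lambda$ with $\lambda^n$ for $n > 0$, this yields the required correspondence of R-parabolic and R-Levi data between the two settings.

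The key technical lemma I would prove is: for each $h \in N$ and each cocharacter $\lambda$ of $K$, $h \in P_\lambda(G)$ if and only if $\pi(h) \in P_{\pi \circ \lambda}(\pi(N))$. The forward direction is formal from continuity of $\pi$. For the reverse, the fibre of $\pi$ through $h$ is the coset $hC$, and since $C = C_G(K^{\circ})$ is pointwise centralised by $\lambda$, the morphism $t \mapsto \lambda(t) h \lambda(t)^{-1}$ into $hC$ is rigidly constrained; one extracts the limit in $G$ from the limit in $\pi(N)$ using this rigidity.

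Granted the parabolic correspondence, the theorem reduces to the Levi analogue: if $H \subseteq P_\lambda(G)$ and $\pi(H) \subseteq L_{\pi \circ \lambda}(\pi(N))$, then there is a cocharacter $\mu$ of $K$ with $P_\mu = P_\lambda$ and $H \subseteq L_\mu(G)$. With this in hand, suppose $H$ is relatively $G$-completely reducible with respect to $K$ and $\pi(H)$ is contained in an R-parabolic $P'$ of $\pi(N)$; I lift $P'$ to $P_\lambda(G)$ via a cocharacter of $K$, apply the relative hypothesis to obtain a Levi $L_\mu \supseteq H$ with $P_\mu = P_\lambda$, and then $\pi(H) \subseteq L_{\pi \circ \mu}(\pi(N))$ is a Levi of $P'$. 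The converse is symmetric.

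The principal obstacle is the Levi adjustment: the hypothesis $\pi(H) \subseteq L_{\pi \circ \lambda}(\pi(N))$ records only that each $h \in H$ is centralised by $\lambda$ \emph{modulo $C$}, and this error must be absorbed by passing to a cocharacter $\mu$ of $K$ satisfying $P_\mu = P_\lambda$ and centralising all of $H$ simultaneously. This requires a structural analysis of the Levi decomposition of $P_\lambda(G) \cap N$ together with the fact that $R_u(P_\lambda(G))$-conjugation of cocharacters preserves the R-parabolic; given this, the remaining steps of the proof are formal consequences of the parabolic and Levi correspondences.
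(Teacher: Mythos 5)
Your proposed ``key technical lemma'' --- that for $h \in N$ and $\lambda \in Y(K)$ one has $h \in P_\lambda(G)$ if and only if $\pi(h) \in P_{\pi\circ\lambda}(\pi(N))$ --- is false in general, and the gap is exactly the subtlety the paper flags in Remark~\ref{rem:nonsplit} and the remark after Theorem~\ref{thm:othersubs}. Take $G$ connected reductive with maximal torus $T$ such that both $T$ and the Weyl group are nontrivial, and let $K = N = N_G(T)$. Then $K^\circ = C = T$, and $\pi(N) = N/T$ is finite, so $\pi\circ\lambda$ is trivial and $P_{\pi\circ\lambda}(\pi(N)) = \pi(N)$ for every $\lambda \in Y(K) = Y(T)$. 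But for any non-central $\lambda \in Y(T)$, the subgroup $P_\lambda(G) \cap N$ is a proper subgroup of $N$, so there exist $h \in N$ with $\pi(h) \in P_{\pi\circ\lambda}(\pi(N))$ yet $h \notin P_\lambda(G)$. The flaw in your justification is that the morphism $t \mapsto \lambda(t)h\lambda(t)^{-1}$ does \emph{not} take values in the fibre $hC$; its image lies in $K^\circ h$ and generally moves within $\pi(N)$, so there is no ``rigidity'' to exploit. The failure occurs precisely when the cocharacter $\lambda$ has a nontrivial component along the central torus $Z(K^\circ)^\circ$, i.e.\ when $\lambda \notin Y(M)$ in the paper's notation.

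The paper avoids this entirely. Lemma~\ref{lem:usefulsub} (via Brion's theorem on extensions with finite quotient) produces a reductive complement $M \le N$ with $N = MC$, $M \cap C$ finite, $M^\circ = [K^\circ,K^\circ]$, so that $\pi|_M$ is an isogeny onto $N/C$. The correspondence you want \emph{does} hold for $\lambda \in Y(M)$ (Proposition~\ref{prop:usefulsub}), and the proof replaces $H$ by $HC \cap M \le M$, replaces $G$ by $MZ(K^\circ)^\circ$, and then the whole relative problem collapses to an absolute one where $Y(G) = Y(K)$ and $C^\circ$ is a central torus --- which is exactly the situation handled by \cite[\S 6.2]{Bate2005}. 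Separately, even if your parabolic correspondence were valid, the paragraph you describe as ``the principal obstacle'' (absorbing the $C$-error in the Levi adjustment) is precisely the hard content and is only gestured at, not argued; that absorption is what Lemma~\ref{lem:usefulsub} and the delegation to \cite{Bate2005} actually deliver. So the proposal is not salvageable as written: you would need to first decompose the cocharacter into an $M$-part and a central-torus part, and that decomposition is the missing structural step.
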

Note that we do not assume that any of the groups $G$, $H$, $K$, $N$ or $C$ in Theorem~\ref{THM:MAIN} is connected. Since $K^{\circ}$ is reductive, $\pi(N)$ is a finite extension of the connected semisimple group $\Inn(K^{\circ}) \cong K^{\circ}/Z(K^{\circ})$, hence it is reductive and $\pi(N)$-complete reducibility makes sense, cf.\ \cite[\S 6]{Bate2005}.

The assumption that $H$ normalises $K^{\circ}$ is a natural one. It holds in the absolute case or if $H$ is contained in $K$, and is often necessary to obtain sensible statements for relative results. More importantly, many natural extensions of results fail without this assumption, as we now illustrate. Recall that a $G$-completely reducible subgroup is necessarily reductive \cite[Property 4]{Serre1998}, and the converse to this holds in characteristic zero \cite[Th\'{e}or\`{e}me~4.4]{Serre2003-2004}. However neither direction holds in full generality in the relative setting, as pointed out in \cite[Remarks 3.2]{Bate2011}. Nevertheless, the following consequence of Theorem~\ref{THM:MAIN} faithfully generalises this to the relative setting, as long as $H$ normalises $K^{\circ}$. We discuss this further in Section~\ref{sec:corollaries}.

\begin{maincorollary} \label{cor:NORMAL}
Let $K \le G$ be reductive algebraic groups and let $H \le N_{G}(K^{\circ})$. If $H$ is relatively $G$-completely reducible with respect to $K$ then the unipotent radical $R_{u}(H)$ centralises $K^{\circ}$. Conversely, if $k$ has characteristic zero and $R_{u}(H)$ centralises $K^{\circ}$ then $H$ is relatively $G$-completely reducible with respect to $K$.
\end{maincorollary}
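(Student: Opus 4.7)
The plan is to derive both directions by applying Theorem~\ref{THM:MAIN} and comparing unipotent radicals across the quotient $\pi\colon N \to N/C$, where $N = N_G(K^\circ)$ and $C = C_G(K^\circ)$. The key elementary observation is that $R_u(H) \subseteq C$ is precisely the assertion $R_u(H) \subseteq \ker(\pi|_H)$, so in both directions the task reduces to controlling the unipotent radical of $\pi(H)$.

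For the forward direction, assume $H$ is relatively $G$-completely reducible with respect to $K$. By Theorem~\ref{THM:MAIN}, $\pi(H)$ is $\pi(N)$-completely reducible, hence reductive by \cite[Property~4]{Serre1998}. Then $\pi(R_u(H))$ is a closed, connected, normal, unipotent subgroup of $\pi(H)$, so $\pi(R_u(H)) \subseteq R_u(\pi(H)^\circ) = 1$. This forces $R_u(H) \subseteq \ker(\pi|_H) \subseteq C$, as required.

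For the converse in characteristic zero, suppose $R_u(H) \subseteq C$. I would first establish that $\pi(H)$ is reductive, then invoke Serre's criterion \cite[Th\'{e}or\`{e}me~4.4]{Serre2003-2004} to conclude that $\pi(H)$ is $\pi(N)$-completely reducible, and apply Theorem~\ref{THM:MAIN}. Reductivity of $\pi(H)$ follows because $\pi(H)^\circ = H^\circ/(H^\circ \cap C)$ is a quotient of $H^\circ$ by a closed normal subgroup already containing $R_u(H^\circ) = R_u(H)$; hence $\pi(H)^\circ$ is a quotient of the connected reductive group $H^\circ/R_u(H^\circ)$, and such a quotient is reductive (the identity component of any closed normal subgroup of a connected reductive group is itself reductive, so its unipotent radical is trivial, and one then passes to a finite quotient). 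Once Theorem~\ref{THM:MAIN} is granted, the argument is essentially mechanical, the principal point requiring care being this behaviour of $R_u$ under $\pi$; the characteristic-zero hypothesis in the converse is not an obstacle to the strategy but simply reflects the known failure of ``reductive implies completely reducible'' in positive characteristic.
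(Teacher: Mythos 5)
Your proof is correct and follows the same route as the paper: apply Theorem~\ref{THM:MAIN} to pass to $\pi(N)$, use that complete reducibility forces reductivity (\cite[Property~4]{Serre1998}) for the forward direction, and use Serre's converse \cite[Th\'{e}or\`{e}me~4.4]{Serre2003-2004} together with the standard fact that quotients of reductive groups are reductive for the backward direction. The paper packages this as Theorem~\ref{thm:bigp}, whose characteristic-zero case is precisely Corollary~\ref{cor:NORMAL}.
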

As in the absolute case, the reverse direction of Corollary~\ref{cor:NORMAL} also holds if the characteristic $\Char(k)$ of $k$ is sufficiently large relative to $K$ and $H$ (independently of $G$), and this specialises to \cite[Th\'{e}or\`{e}me~4.4]{Serre2003-2004} in the case $K = G$. A more detailed statement is given in Theorem~\ref{thm:bigp}.

Just as $G$-complete reducibility can be expressed in terms of the closure of orbits of $G$ on its Cartesian products $G^{n}$ under conjugation \cite[Corollary 3.7]{Bate2005}, the relative notion can be characterised in terms of the closure of orbits of $K$ on $G^{n}$ (see Theorem~\ref{thm:gcr_gen_tup} below). Since $\pi(N)^{\circ} = \pi(K^{\circ})$ and an orbit $K \cdot \mathbf{h}$ is closed in  $G^{n}$ if and only if $K^{\circ} \cdot \mathbf{h}$ is closed in  $G^{n}$, Theorem~\ref{THM:MAIN} is equivalent to the following.

\begin{maintheorem} \label{THM:GEOMETRIC}
Let $K \le G$ be reductive algebraic groups, write $N = N_{G}(K^{\circ})$, $C = C_{G}(K^{\circ})$, and let $\pi \colon N \to N/C$ be the quotient map. Let $\mathbf{h} \in N^{n}$ $(n \ge 1)$, and write $\pi$ also for the induced map $N^{n} \to (N/C)^{n}$. Then $K \cdot \mathbf{h}$ is closed in $G^{n}$ if and only if $\pi(N) \cdot \pi(\mathbf{h})$ is closed in $(N/C)^{n}$.
\end{maintheorem}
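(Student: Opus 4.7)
The strategy is to deduce Theorem~\ref{THM:GEOMETRIC} from Theorem~\ref{THM:MAIN} by matching both sides of the claim against the orbit-theoretic characterisation of (relative) $G$-complete reducibility. The chief technical subtlety will be passing from arbitrary tuples to generating ones; everything else is essentially a restatement.

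First I reduce both sides to identity-component form, as flagged immediately before the statement: $K \cdot \mathbf{h}$ is closed in $G^{n}$ iff $K^{\circ} \cdot \mathbf{h}$ is, and $\pi(N) \cdot \pi(\mathbf{h})$ is closed in $(N/C)^{n}$ iff $\pi(K^{\circ}) \cdot \pi(\mathbf{h}) = \pi(N)^{\circ} \cdot \pi(\mathbf{h})$ is. Both instances use the general fact that an orbit of an algebraic group decomposes as a finite disjoint union of equidimensional orbits of its identity component, so closedness of either is equivalent to closedness of the other.

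Next I set $H \deq \overline{\langle h_{1}, \ldots, h_{n} \rangle}$, the Zariski closure of the abstract subgroup of $N$ generated by the entries of $\mathbf{h}$. Then $H \le N$ normalises $K^{\circ}$ and lies in the scope of Theorem~\ref{THM:MAIN}; moreover, since $\pi$ is a surjective homomorphism of algebraic groups, $\pi(H)$ is closed in $\pi(N)$ and coincides with $\overline{\langle \pi(h_{1}), \ldots, \pi(h_{n}) \rangle}$. The orbit-closure characterisation of relative complete reducibility (Theorem~\ref{thm:gcr_gen_tup}) equates closedness of $K \cdot \mathbf{h}$ in $G^{n}$ with the relative $G$-complete reducibility of $H$ with respect to $K$, and its absolute counterpart \cite[Corollary~3.7]{Bate2005} applied to the reductive group $\pi(N)$ equates closedness of $\pi(N) \cdot \pi(\mathbf{h})$ in $(N/C)^{n}$ with the $\pi(N)$-complete reducibility of $\pi(H)$. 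Theorem~\ref{THM:MAIN} equates the two reducibility statements, so Theorem~\ref{THM:GEOMETRIC} follows.

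The main technical point is that the orbit-closure characterisations above are typically stated for a tuple that generates $H$ as a Zariski-closed subgroup. For a general tuple $\mathbf{h}$ I would augment to $\tilde{\mathbf{h}} \in H^{n+m}$ whose entries do generate $H$ and show that closedness of $K \cdot \mathbf{h}$ in $G^{n}$ is equivalent to closedness of $K \cdot \tilde{\mathbf{h}}$ in $G^{n+m}$, with the analogous statement after applying $\pi$. The key observation is that whenever $\mathbf{h}$ lies in some $P_{\lambda}^{n}$, the closed subgroup $P_{\lambda}$ contains $\langle \mathbf{h} \rangle$ and hence $H$, so it contains the additional entries of $\tilde{\mathbf{h}}$; Hilbert--Mumford then transfers closedness between the two orbits. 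This reduction is where the actual work sits, but it is routine once formulated.
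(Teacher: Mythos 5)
Your core argument is correct and is exactly the route the paper takes in the remark following Theorem~\ref{thm:gcr_gen_tup}: let $H$ be the Zariski closure of the subgroup generated by the entries of $\mathbf{h}$, observe that $\mathbf{h}$ and $\pi(\mathbf{h})$ are then generic tuples for $H$ and $\pi(H)$ respectively (tuples that topologically generate are generic, \cite[Remark 5.6]{Bate2013}), and chain Theorem~\ref{thm:gcr_gen_tup}, its absolute case, and Theorem~\ref{THM:MAIN}.

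Two side remarks. First, the opening reduction to $K^{\circ}$- and $\pi(N)^{\circ}$-orbits is not actually used anywhere in your argument and can be dropped; Theorem~\ref{thm:gcr_gen_tup} is already stated for possibly disconnected $K$ and $G$. Second, and more substantively, the ``main technical point'' you flag at the end is a phantom: you have \emph{chosen} $H$ to be $\overline{\langle h_1,\ldots,h_n\rangle}$, so $\mathbf{h}$ topologically generates $H$ by construction, and your own observation that $\pi(H) = \overline{\langle \pi(\mathbf{h})\rangle}$ gives the same for $\pi(\mathbf{h})$. No augmentation of the tuple is needed in this direction. (The lengthening trick you describe \emph{is} what is needed for the converse implication, Theorem~\ref{THM:GEOMETRIC} $\Rightarrow$ Theorem~\ref{THM:MAIN}: there one starts from a subgroup $H$ with a generic tuple $\mathbf{h}$ and must arrange that $\pi(\mathbf{h})$ is also generic for $\pi(H)$, which can fail without lengthening. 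Perhaps that is the concern you had in mind, but it does not arise in the direction you are proving.)
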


In the absolute case, $G$-complete reducibility of a subgroup $H$ interacts with separability of $H$ in $G$ \cite[Definition~3.27]{Bate2005} and semisimplicity of the Lie algebra $\Lie(G)$ as an $H$-module \cite{Bate2010,Uchiyama2017}. In Section \ref{sec:separabilty} we define a relative version of separability, and show that the corresponding results still hold. The next two results are particularly interesting.

The following is a generalisation of \cite[Theorem 1.7]{Bate2010} both to the relative setting and to the case that $G$ may not be connected. Recall that a prime $p$ is called \emph{bad} for the reductive group $G$ if $p$ divides some coefficient when the highest root in the root system of some simple factor of $G$ is expressed as a sum of simple roots. The characteristic $\Char(k)$ of $k$ is called \emph{good} for $G$ if it is not a bad prime. If $\Char(k)$ is good and not a divisor of $r+1$ whenever $G$ has a simple factor $A_r$ then it is called \emph{very good} for $G$. 

\begin{maintheorem} \label{THM:separable}
Let $K \le G$ be reductive algebraic groups, write $N = N_{G}(K^{\circ})$, $C = C_{G}(K^{\circ})$, and let $\pi \colon N \to N/C$ be the quotient map. Suppose that $\Char(k)$ is zero or is very good for $K$ and does not divide $|\pi(N)/\pi(N^{\circ})|$. If $H \le N$ and $\Lie(K)$ is semisimple as an $H$-module then $H$ is relatively $G$-completely reducible with respect to $K$.
\end{maintheorem}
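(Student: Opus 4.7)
The plan is to apply Theorem~\ref{THM:MAIN}, reducing the assertion to showing that $\pi(H)$ is $\pi(N)$-completely reducible, and then invoke the Lie-algebra-semisimplicity criterion for complete reducibility, established in the connected case as \cite[Theorem~1.7]{Bate2010}. The hypothesis $\Char(k) \nmid |\pi(N)/\pi(N^{\circ})|$ is precisely what is required to extend that criterion to the possibly disconnected reductive group $\pi(N)$.

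Since $C$ centralises $K^{\circ}$, it acts trivially on $\Lie(K) = \Lie(K^{\circ})$ via $\Ad$, so the action of $H$ on $\Lie(K)$ factors through $\pi(H)$. Under the very good characteristic hypothesis on $K$, the $\Ad(N)$-module $\Lie(K^{\circ})$ decomposes as
\[
\Lie(K^{\circ}) = \Lie(Z(K^{\circ})) \oplus [\Lie(K^{\circ}), \Lie(K^{\circ})],
\]
and the differential of the isogeny $K^{\circ} \to K^{\circ}/Z(K^{\circ}) = \pi(N)^{\circ}$ identifies the derived summand $\pi(N)$-equivariantly with $\Lie(\pi(N)^{\circ}) = \Lie(\pi(N))$. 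Semisimplicity of $\Lie(K)$ as an $H$-module therefore passes to its direct summand $\Lie(\pi(N))$, which is thus semisimple as a $\pi(H)$-module.

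It remains to apply the absolute criterion to $\pi(H) \le \pi(N)$. The root system of $\pi(N)^{\circ}$ agrees with that of $K^{\circ}$, so $\Char(k)$ is very good for $\pi(N)^{\circ}$, and the connected case of \cite[Theorem~1.7]{Bate2010} yields that $\pi(H) \cap \pi(N)^{\circ}$ is $\pi(N)^{\circ}$-completely reducible. The hypothesis $\Char(k) \nmid |\pi(N)/\pi(N^{\circ})|$ then permits a Maschke-type averaging argument, together with standard transfer between $G^{\circ}$-complete reducibility and $G$-complete reducibility in the absolute setting, to upgrade this to $\pi(N)$-complete reducibility of $\pi(H)$. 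Theorem~\ref{THM:MAIN} finishes the proof.

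The principal obstacle is the passage from the connected to the possibly disconnected ambient group in the Lie-algebra criterion, where the index hypothesis is indispensable; the algebraic decomposition of $\Lie(K^{\circ})$ is routine once very good characteristic is assumed, though it does require care because $K$ itself may be disconnected, so that $N$ and $\pi(N)$ need not be connected either.
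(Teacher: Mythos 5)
You take a genuinely different route from the paper. The paper does not obtain Theorem~\ref{THM:separable} by passing through Theorem~\ref{THM:MAIN}: it instead proves a relative separability result (Theorem~\ref{thm:vg_sep}, showing the characteristic hypothesis forces $H$ to be separable for $K$) together with a relative analogue of \cite[Theorem~3.46]{Bate2005} (namely Theorem~\ref{thm:sep_rel_gcr}), and combines them in Corollary~\ref{cor:vg_sep}, from which Theorem~\ref{THM:separable} follows. In that route the coprime-order hypothesis is consumed by Richardson's lemma on fixed-point subschemes of linearly reductive groups \cite[Lemma~4.1]{Richardson1982}, not by any lifting of complete reducibility across a finite extension.

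The substantive issue with your version is the ``Maschke-type averaging argument'' in the final paragraph, which you neither prove nor cite. What you need there is: if the normal subgroup $\pi(H)\cap\pi(N)^{\circ}$ of $\pi(H)$ is $\pi(N)$-completely reducible and has index in $\pi(H)$ coprime to $p$, then $\pi(H)$ is $\pi(N)$-completely reducible. This is not \cite[Theorem~3.10]{Bate2005}, which gives only the Clifford direction. The statement is true, but establishing it requires real work: given $\pi(H)\le P_{\lambda}$ with $\pi(H)\cap\pi(N)^{\circ}\le L_{\lambda}$, one shows the algebraic $1$-cocycle $x\mapsto x\,c_{\lambda}(x)^{-1}$ with values in $R_{u}(P_{\lambda})$ actually lands in $C_{R_{u}(P_{\lambda})}(\pi(H)\cap\pi(N)^{\circ})$, descends to the finite linearly reductive quotient, and is a coboundary by cohomology vanishing along a filtration of the unipotent radical. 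That is comparable in difficulty to the separability argument the paper runs instead, so your route does not avoid the work but merely relocates it, and as written the key step is missing. A minor slip: $K^{\circ}\to K^{\circ}/Z(K^{\circ})$ is not an isogeny when $Z(K^{\circ})^{\circ}$ is nontrivial; the isogeny you want is its restriction to $[K^{\circ},K^{\circ}]$, whose kernel is \'etale precisely because $p$ is very good.
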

Theorem~\ref{THM:separable} in fact holds under a slightly weaker condition on $\Char(k)$ involving $H$ as well as $K$ and $N_{G}(K^{\circ})$, see Corollary~\ref{cor:vg_sep}.

Next, for subgroups $H$ and $K$ of an algebraic group $G$ we say that $(G,K)$ is a \emph{reductive pair for $H$} if $\Lie(K)$ is an $H$-module direct summand of $\Lie(G)$ (Definition \ref{def:reductivepair}). The interplay between this notion and separability (Lemma~\ref{lem:sep_red_sep}) gives a further condition for $G$-complete reducibility (Corollary~\ref{cor:sep_red_sep}). Due to the well-known fact that every subgroup of $\GL(V)$ is separable, we obtain a particularly nice criterion in this case, with no dependence on $\Char(k)$. This is orthogonal to other criteria derived recently in \cite{Bate2018}. The proof can be found at the end of Section~\ref{sec:separabilty}.
\begin{maintheorem} \label{thm:GLV}
Let $G = \GL(V)$, let $K \le G$ be reductive and let $H \le N_{G}(K^{\circ})$. Suppose that $(G,K)$ is a reductive pair for $H$. If $\Lie(K)$ is semisimple as an $H$-module then $H$ is relatively $G$-completely reducible with respect to $K$.
\end{maintheorem}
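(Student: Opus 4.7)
The plan is to apply Corollary~\ref{cor:sep_red_sep}, which is the main output of Section~\ref{sec:separabilty}. That corollary combines the reductive pair condition with a separability hypothesis for $H$ in $G$ and semisimplicity of $\Lie(K)$ as an $H$-module to conclude that $H$ is relatively $G$-completely reducible with respect to $K$; internally it passes through Lemma~\ref{lem:sep_red_sep}, which uses the $H$-module splitting $\Lie(G) = \Lie(K) \oplus W$ to transfer separability in $G$ to the appropriate relative statement for $K$. Every hypothesis of that corollary is already assumed in Theorem~\ref{thm:GLV} except for the separability of $H$ in $G$.

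The remaining verification is the observation that every closed subgroup of $\GL(V)$ is separable in the sense of \cite[Definition 3.27]{Bate2005}. This is standard: the scheme-theoretic centraliser of $H$ inside the associative algebra $\Mat(V) = \Lie(\GL(V))$ is the kernel of the linear family of commutators $x \mapsto hx - xh$ for $h \in H$, hence is automatically reduced, so $\dim \Lie(C_G(H)) = \dim C_G(H)$. With this separability in hand, Corollary~\ref{cor:sep_red_sep} applies and yields the conclusion of Theorem~\ref{thm:GLV} at once.

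There is no serious obstacle: the entire content of Section~\ref{sec:separabilty}, most notably the reductive pair/separability interplay of Lemma~\ref{lem:sep_red_sep}, has already been set up for use. The novelty of Theorem~\ref{thm:GLV} is conceptual rather than technical, namely the observation that the separability input, which in Theorem~\ref{THM:separable} is arranged by restricting $\Char(k)$, comes for free inside $\GL(V)$. This is exactly what removes the characteristic hypothesis in the statement and produces a criterion orthogonal to those in \cite{Bate2018}.
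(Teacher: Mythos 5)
Your argument is correct and is essentially the paper's proof: observe that every closed subgroup of $\GL(V)$ is separable in $\GL(V)$, then combine this with the reductive-pair hypothesis via Lemma~\ref{lem:sep_red_sep} to get that $H$ is separable for $K$, and finish with Theorem~\ref{thm:sep_rel_gcr}. Be careful with the citation, though: Corollary~\ref{cor:sep_red_sep} is not the result you describe. Its hypothesis is a characteristic restriction (which is exactly what Theorem~\ref{thm:GLV} is designed to avoid), not ``separability of $H$ in $G$,'' and its conclusion is only that $H$ is separable for $K$, not that $H$ is relatively $G$-cr with respect to $K$ (the latter is Corollary~\ref{cor:sep_red_sep_gcr}, which also carries the characteristic restriction). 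What you actually want, and what the paper uses, is the chain: $\GL(V)$-separability $\Rightarrow$ $H$ separable for $K$ via Lemma~\ref{lem:sep_red_sep} $\Rightarrow$ conclusion via Theorem~\ref{thm:sep_rel_gcr}. With that bookkeeping corrected, your proof is the paper's.
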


For general connected $G$ we obtain the following criterion on $\Char(k)$ for complete reducibility, which depends on $G$ but not on $K$, $N_{G}(K^{\circ})$ or $H$.
\begin{maintheorem} \label{thm:G-connected-red-pair}
Maintain the notation of Theorem~\ref{THM:MAIN} and suppose that $G$ is connected. Suppose that $(G,K)$ is a reductive pair for $H$ and that $\Char(k)$ is very good for $G$. If $\Lie(K)$ is semisimple as an $H$-module, then $H$ is relatively $G$-completely reducible with respect to $K$.
\end{maintheorem}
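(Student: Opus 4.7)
The plan is to mimic the proof of Theorem~\ref{thm:GLV}. There, the special feature exploited is that every closed subgroup of $G = \GL(V)$ is separable in $G$; combined with the reductive pair hypothesis and the $H$-semisimplicity of $\Lie(K)$, this supplies all the input needed for Corollary~\ref{cor:sep_red_sep}. To adapt this to our setting, the replacement needed is a separability statement for arbitrary closed subgroups of a connected reductive group in very good characteristic.

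First I would invoke the separability theorem of Bate-Martin-R\"ohrle-Tange: if $G$ is connected reductive and $\Char(k)$ is very good for $G$, then every closed subgroup of $G$ is separable in $G$. In particular $H$ is separable in $G$. Next I would apply Lemma~\ref{lem:sep_red_sep}: since $(G,K)$ is a reductive pair for $H$, the $H$-module splitting $\Lie(G) = \Lie(K) \oplus V$ allows one to transfer the separability of $H$ in $G$ to the relative separability with respect to $K$ required by the hypotheses of Corollary~\ref{cor:sep_red_sep}. Finally, combined with the assumption that $\Lie(K)$ is semisimple as an $H$-module, Corollary~\ref{cor:sep_red_sep} yields the conclusion that $H$ is relatively $G$-completely reducible with respect to $K$.

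The main obstacle I anticipate is bookkeeping rather than a genuine mathematical difficulty: one must verify that the precise formulations of Lemma~\ref{lem:sep_red_sep} and Corollary~\ref{cor:sep_red_sep} are valid for $H$ that only normalises $K^{\circ}$, rather than lying inside $K$, and that the intermediate notion of separability used in these statements is exactly the one produced by the reductive pair argument. A secondary concern is the precise form of the BMRT result; depending on the version cited, additional mild hypotheses may need to be verified or the very good condition on $\Char(k)$ combined with connectedness of $G$ may need to be checked to be in the correct form. Neither should present any real obstacle, and no hypothesis on $K$ or $H$ beyond those already stated should be required, since all characteristic-dependence has been absorbed into the statement "$\Char(k)$ is very good for $G$".
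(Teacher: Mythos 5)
Your approach is correct and is exactly the route the paper takes: invoke \cite[Theorem~1.2]{Bate2010} for separability of $H$ in the connected group $G$ in very good characteristic, transfer to separability for $K$ via Lemma~\ref{lem:sep_red_sep} using the reductive pair hypothesis, and then conclude via Theorem~\ref{thm:sep_rel_gcr} with the semisimplicity of $\Lie(K)$ as $H$-module; the paper packages the first two steps as Corollary~\ref{cor:sep_red_sep} and all three as Corollary~\ref{cor:sep_red_sep_gcr}, whose characteristic condition $|\psi(H)/\psi(H\cap G^{\circ})|$ becomes vacuous when $G$ is connected. One small slip in the write-up: in both places where you cite Corollary~\ref{cor:sep_red_sep}, you must mean Theorem~\ref{thm:sep_rel_gcr} --- Corollary~\ref{cor:sep_red_sep} only concludes that $H$ is separable for $K$, not that $H$ is relatively $G$-cr with respect to $K$, and does not take $H$-semisimplicity of $\Lie(K)$ as a hypothesis. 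With the references corrected the argument is exactly that of the paper.
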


The paper is organised as follows. After recalling relevant background in Section~\ref{sec:notation}, we prove Theorem~\ref{THM:MAIN} in Section~\ref{sec:proof}. We also briefly discuss the analogue of Theorem~\ref{THM:MAIN} for relative $G$-irreducibility (Corollary \ref{cor:irred}). In Section~\ref{sec:corollaries} we derive a series of consequences of Theorem~\ref{THM:MAIN}, including Corollary~\ref{cor:NORMAL}. In Section~\ref{sec:separabilty} we introduce our relative notions of separability and reductive pairs, and generalise further results from the absolute case, particularly from \cite{Bate2010}. In Section \ref{sec:liealg} we consider relative $G$-complete reducibility of Lie subalgebras of $\Lie(G)$, and derive variants of Theorems~\ref{THM:MAIN} and \ref{THM:GEOMETRIC}. Finally, in Section \ref{sec:rationality} we consider a rational version of relative $G$-complete reducibility, dropping the assumption that $k$ is algebraically closed. Again, variants of Theorems~\ref{THM:MAIN} and \ref{THM:GEOMETRIC} hold in this setting.

\section{Notation and preliminaries} \label{sec:notation}

Throughout, $k$ denotes a field and $\Char(k)$ denotes the characteristic of $k$. We take $k$ to be algebraically closed, until Section~\ref{sec:rationality} where we generalise our main results. All groups encountered are affine algebraic groups, meaning Zariski-closed subgroups of some general linear group over $k$ (or its algebraic closure $\overline{k}$ in Section~\ref{sec:rationality}). Homomorphisms between groups are morphisms of varieties, and subgroups are closed. For an algebraic group $G$, we denote by $G^{\circ}$ the connected component of the identity element. For a subgroup $K$ of $G$, the normaliser and centraliser of $K$ in $G$ are respectively denoted $N_{G}(K)$ and $C_{G}(K)$. The unipotent radical of $G$, denoted $R_{u}(G)$, is the (unique) maximal connected normal unipotent subgroup of $G$. We say that $G$ is reductive if $R_{u}(G) = 1$. We do not require that a reductive group is connected. An isogeny means a surjective map $G_{1} \to G_{2}$ with finite kernel, where $G_{1}$ and $G_{2}$ are reductive. In this case the kernel centralises $G_{1}^{\circ}$.

A cocharacter of $G$ is a homomorphism of algebraic groups from the multiplicative group $k^{\ast}$ to $G$. The set of all cocharacters of $G$ is denoted $Y(G)$. Conjugation induces an action of $G$ on $Y(G)$, with $(g \cdot \phi)(c) \deq g \phi(c) g^{-1}$ for all $g \in G$, $\phi \in Y(G)$, $c \in k^{\ast}$. We also use a dot to denote left conjugation of $G$ on itself and on the Cartesian products $G^{n}$. If $\phi \colon k^{\ast} \to G$ is a morphism, we say that the limit $\lim_{a \to 0} \phi(a)$ exists if there is a morphism $\widehat{\phi} \colon k \to G$ whose restriction to $k^{\ast}$ is $\phi$. In this case, we write $\lim_{a \to 0} \phi(a)$ for $\widehat{\phi}(0)$. If the limit exists then it is unique, as $k^{\ast}$ is Zariski-dense in $k$. For $\lambda \in Y(G)$, define the following subgroups of $G$:
\begin{align*}
P_{\lambda} &\deq \left\{ g \in G \ssep \lim_{a \to 0} (\lambda(a) \cdot g) \textup{ exists} \right\},\\
L_{\lambda} &\deq \left\{ g \in G \ssep \lim_{a \to 0} (\lambda(a) \cdot g) = g \right\} = C_{G}(\lambda(k^{\ast})).
\end{align*}
Following \cite{Bate2005}, such a subgroup $P_{\lambda}$ is called a Richardson parabolic subgroup, or R-parabolic subgroup of $G$, and $L_{\lambda}$ is called a Richardson Levi subgroup of $G$, or R-Levi subgroup. If $G$ is connected reductive then these definitions coincide with the usual definitions of parabolic subgroups and Levi subgroups \cite[\S 6]{Bate2005}. The unipotent radical of $P_{\lambda}$ is given by
\[ R_{u}(P_{\lambda}) = \left\{ g \in G \ssep \lim_{a \to 0} (\lambda(a) \cdot g) = 1 \right\}. \]

If $K$ is a reductive subgroup of $G$ then the inclusion $K \to G$ induces an injective map $Y(K) \to Y(G)$, and we identify $Y(K)$ with its image in $Y(G)$. The following is the central notion of the paper.
\begin{definition} \label{def:gcr}
Let $H$ and $K$ be closed subgroups of an algebraic group $G$. We say that \emph{$H$ is relatively $G$-completely reducible with respect to $K$} if, for every cocharacter $\lambda \in Y(K)$ such that $H \le P_{\lambda}$, there exists a cocharacter $\mu \in Y(K)$ such that $P_{\lambda} = P_{\mu}$ and $H \le L_{\mu}$.
\end{definition}

While the above definitions make sense without assumptions on $G$ or $K$, we always assume that both $K$ and $G$ are reductive. We refer to the case $K = G$ as the \emph{absolute case}, and say that $H$ is \emph{$G$-completely reducible} if the above holds in this case. For brevity we sometimes write `relatively $G$-cr with respect to $K$', or just `$G$-cr' in the absolute case.

Recall from \cite[Definition 5.4]{Bate2013} that a \emph{generic tuple} for a subgroup $H \le G$ is an $n$-tuple $\mathbf{h} \in G^{n}$ $(n \ge 1)$ such that the elements in $\mathbf{h}$ generate the same associative $k$-subalgebra of $\Mat_{m \times m}(k)$ as $H$, for some faithful representation $G \to \GL_{m}(k)$. For instance a tuple of elements which topologically generate $H$ is a generic tuple for $H$ \cite[Remark 5.6]{Bate2013}.

\begin{theorem}[{\cite[Theorem 1.1]{Bate2011}}] \label{thm:gcr_gen_tup}
Let $K \le G$ be reductive algebraic groups, let $H$ be a closed subgroup of $G$ and let $\mathbf{h} \in G^{n}$ be a generic tuple for $H$. Then $H$ is relatively $G$-completely reducible with respect to $K$ if and only if the orbit $K\cdot \mathbf{h}$ is closed in $G^{n}$.
\end{theorem}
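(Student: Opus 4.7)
The natural strategy is to apply the Hilbert--Mumford--Kempf--Rousseau (HMKR) criterion for closed orbits under reductive group actions. Applied to the simultaneous conjugation action of $K$ on $G^{n}$, it says that $K \cdot \mathbf{h}$ is closed if and only if, for every $\lambda \in Y(K)$ for which $\lim_{a \to 0} \lambda(a) \cdot \mathbf{h}$ exists, the limit lies in $K \cdot \mathbf{h}$. Since a $K$-orbit is closed precisely when the underlying $K^{\circ}$-orbit is, one may in practice work with $\lambda \in Y(K^{\circ})$, where HMKR is classical. Two basic translations link this criterion to the subgroup $H$: the limit exists precisely when $\mathbf{h} \in P_{\lambda}^{n}$ and automatically lies in $L_{\lambda}^{n}$; and genericity of $\mathbf{h}$ for $H$ means that any closed subgroup of $G$ containing the entries of $\mathbf{h}$ contains $H$, so $\mathbf{h} \in P_{\lambda}^{n}$ is equivalent to $H \le P_{\lambda}$, and analogously for $L_{\lambda}$.

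For the reverse direction, suppose $K \cdot \mathbf{h}$ is closed and $H \le P_{\lambda}$ for some $\lambda \in Y(K)$. Then $\mathbf{h} \in P_{\lambda}^{n}$, the limit $\mathbf{h}' \deq \lim_{a \to 0} \lambda(a) \cdot \mathbf{h}$ exists in $L_{\lambda}^{n}$, and closure of the orbit forces $\mathbf{h}' = k \cdot \mathbf{h}$ for some $k \in K$. Taking $\mu \deq k^{-1} \cdot \lambda \in Y(K)$ gives $\mathbf{h} = k^{-1} \cdot \mathbf{h}' \in L_{\mu}^{n}$, hence $H \le L_{\mu}$ by genericity. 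The delicate requirement $P_{\mu} = P_{\lambda}$ reduces to arranging $k \in P_{\lambda} \cap K$; this is achieved by modifying $k$ inside the coset $k \cdot C_{K}(\mathbf{h})$, exploiting the Levi decomposition $P_{\lambda} = R_{u}(P_{\lambda}) \rtimes L_{\lambda}$ together with the fact that $\mathbf{h}'$ already lies in $L_{\lambda}^{n}$.

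For the forward direction, suppose $H$ is relatively $G$-completely reducible with respect to $K$ and, for contradiction, that $K \cdot \mathbf{h}$ is not closed. HMKR supplies $\lambda \in Y(K)$ with $\mathbf{h}' \deq \lim_{a \to 0} \lambda(a) \cdot \mathbf{h}$ existing but outside $K \cdot \mathbf{h}$; genericity yields $H \le P_{\lambda}$; and relative $G$-complete reducibility produces $\mu \in Y(K)$ with $P_{\mu} = P_{\lambda}$ and $H \le L_{\mu}$. Since $L_{\mu}$ and $L_{\lambda}$ are conjugate Levi subgroups of the common R-parabolic, one has $L_{\mu} = u L_{\lambda} u^{-1}$ for some $u \in R_{u}(P_{\lambda})$, so $u^{-1} \cdot \mathbf{h} \in L_{\lambda}^{n}$, and the identity $\lambda(a) u \lambda(a)^{-1} \to 1$ as $a \to 0$ lets one compute $\mathbf{h}' = u^{-1} \cdot \mathbf{h}$ directly.

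The principal obstacle appears at precisely this juncture: the element $u$ a priori lies only in $R_{u}(P_{\lambda}^{G})$, whereas what is needed is $u \in K$ so that $\mathbf{h}' = u^{-1} \cdot \mathbf{h} \in K \cdot \mathbf{h}$, yielding the desired contradiction. Overcoming this requires the structural compatibility, for $\lambda \in Y(K)$ with $K$ reductive, between the R-parabolic theory of $G$ and that of $K$: one has $P_{\lambda}^{K} = P_{\lambda}^{G} \cap K$, so cocharacters in $Y(K)$ defining the same R-parabolic of $G$ also define the same R-parabolic of $K$, whose Levi subgroups are intertwined by elements of $R_{u}(P_{\lambda}^{K}) = R_{u}(P_{\lambda}^{G}) \cap K$. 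This allows $u$ to be chosen in $K$ from the outset, closing the contradiction and completing both directions.
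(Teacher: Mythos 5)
The paper does not prove this theorem: it is imported verbatim as \cite[Theorem~1.1]{Bate2011}, so there is no in-paper argument to compare against, and your attempt must stand on its own. Your overall plan (the Hilbert--Mumford--Kempf--Rousseau closedness criterion applied to the conjugation action of $K$ on $G^n$, plus the property of generic tuples that $\mathbf{h}\in P_\lambda^n \Leftrightarrow H\le P_\lambda$ and likewise for R-Levis) is the right one, and is in fact the approach taken in \cite{Bate2011}. However, both of the steps you yourself flag as delicate conceal genuine gaps. In the reverse direction, ``modifying $k$ inside the coset $k\cdot C_K(\mathbf{h})$, exploiting the Levi decomposition'' does not establish that $k$ can be chosen in $P_\lambda$. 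What is needed is the relative version of a real Kempf--Rousseau/Iwahori-type instability statement (cf.\ \cite{Bate2005,Bate2011}): if $\mathbf{h}$ and its $\lambda$-limit $\mathbf{h}'$ lie in the same $K$-orbit, then they already lie in the same $(R_u(P_\lambda)\cap K)$-orbit. That does not follow from the semidirect product decomposition of $P_\lambda$ together with $\mathbf{h}'\in L_\lambda^n$, and it is where the actual work lives.

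In the forward direction you correctly locate the obstacle — the element $u$ with $L_\mu=uL_\lambda u^{-1}$ must be taken in $K$ — but your resolution only gets partway. Invoking $R_u(P_\lambda\cap K)=R_u(P_\lambda)\cap K$ and the transitivity of $R_u(P_\lambda\cap K)$ on R-Levi subgroups of $P_\lambda\cap K$ yields $v\in R_u(P_\lambda)\cap K$ with $L_\lambda\cap K = v(L_\mu\cap K)v^{-1}$; but the limit computation $\mathbf{h}'=v^{-1}\cdot\mathbf{h}$ requires $L_\lambda=vL_\mu v^{-1}$ \emph{in $G$}, since $\mathbf{h}$ lies in $L_\mu^n$ and not in $(L_\mu\cap K)^n$. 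Passing from the $K$-level to the $G$-level statement needs an extra lemma: for $\lambda,\nu\in Y(K)$, if $P_\lambda=P_\nu$ in $G$ and $L_\lambda\cap K=L_\nu\cap K$, then $L_\lambda=L_\nu$. This is true — both R-Levis of the common R-parabolic contain $C_G(Z(L_\lambda\cap K)^\circ)$, hence a common maximal torus of $G$, and an R-parabolic has a unique R-Levi through a given maximal torus — but it is exactly the content hidden in ``this allows $u$ to be chosen in $K$ from the outset.'' A smaller slip worth noting: your gloss that genericity means any closed subgroup containing the entries of $\mathbf{h}$ contains $H$ is false in general (a single regular semisimple element of finite order is a generic tuple for a maximal torus of $\GL_n$ but lies in a finite subgroup). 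The property you actually use, $\mathbf{h}\in P_\lambda^n\Leftrightarrow H\le P_\lambda$ and $\mathbf{h}\in L_\lambda^n\Leftrightarrow H\le L_\lambda$, is a specific lemma about generic tuples and R-parabolic/R-Levi subgroups (see \cite{Bate2013}), not a formal consequence of your gloss.
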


\begin{remark}
Let $H$ be a subgroup of $G$ and let $\mathbf{h} \in G^{n}$ be a generic tuple for $H$. Then we can lengthen $\mathbf{h}$ so that $\pi(\mathbf{h})$ is also a generic tuple for $\pi(H)$. Together with Theorem~\ref{thm:gcr_gen_tup}, this shows that Theorem~\ref{THM:GEOMETRIC} implies Theorem~\ref{THM:MAIN}. Conversely, given $\mathbf{h} \in G^{n}$ we let $H$ be the closed subgroup topologically generated by the elements of $\mathbf{h}$. Then $\pi(H)$ is topologically generated by $\pi(\mathbf{h})$ and it follows at once that Theorem~\ref{THM:MAIN} implies Theorem~\ref{THM:GEOMETRIC}.
\end{remark}

\begin{remark} \label{rem:GcrMcr} Although our methods make intrinsic use of the fact that $K$ is reductive, Theorem~\ref{thm:gcr_gen_tup} shows that the assumption that $G$ is reductive is unimportant in the relative setting, since we are free to replace $G$ with any closed subgroup of $G$ containing $H$ and $K$, or with any group $G'$ containing $G$ as a closed subgroup, cf.\ \cite[Corollary 3.6]{Bate2011}. Although it is beyond the scope of this paper, there is the potential to develop the theory of relative complete reducibility with respect to arbitrary closed subgroups $K$. For instance, applying Definition~\ref{def:gcr} in the case when $Y(K)$ is trivial, i.e.\ when $K^{\circ}$ is unipotent, we see that every subgroup is relatively $G$-cr with respect to $K$. This corresponds to the geometric fact that all orbits of a unipotent group on a variety are closed.
\end{remark}

\section{Proof of Theorem \ref{THM:MAIN}} \label{sec:proof}

Recall our set-up that $G$ is a (not necessarily connected) reductive algebraic group, and $H$ and $K$ are closed subgroups of $G$ with $K$ reductive. Write $N = N_{G}(K^{\circ})$, $C = C_{G}(K^{\circ})$, let $\pi \colon N \to N/C$ be the quotient map, and assume that $H \le N$. We begin with a crucial lemma. This is straightforward if $N$ is reductive, cf.\ \cite[Lemma 6.14]{Bate2005}; but see Remark~\ref{rem:nonsplit} below for a subtle point which arises when $N$ has a non-trivial unipotent radical.

\begin{lemma} \label{lem:usefulsub} In the setting of Theorem~\ref{THM:MAIN}, there exists a reductive subgroup $M$ of $N$ such that 
\begin{enumerate}
	\item $M^{\circ} = \Kder$, $M \cap C$ is finite and $N = MC$; \label{usefulsub-i}
	\item the restriction of $\pi$ to $M$ gives an isogeny $M \to N/C$; \label{usefulsub-ii}
	\item for all $\lambda \in Y(K)$ we have $P_{\lambda} \cap N = (P_{\lambda} \cap M)C$ and $L_{\lambda} \cap N = (L_{\lambda} \cap M)C$. \label{usefulsub-iii}
\end{enumerate}
\end{lemma}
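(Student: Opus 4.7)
My plan is to prove (i) first and deduce (ii) and (iii) as straightforward consequences. For (i), the main task is to construct a reductive subgroup $M$ of $N$ with $M^\circ = \Kder$ and $N = MC$; the finiteness of $M \cap C$ is then automatic, since $\Kder \cap C \subseteq \Kder \cap Z(K^\circ) \subseteq Z(\Kder)$ is finite. Note that $\pi|_{\Kder}$ is an isogeny onto $\pi(\Kder) = \pi(K^\circ) = \pi(N)^\circ$, so the substantive content is to adjoin enough elements of $N$ to $\Kder$ to surject onto the finite component group $\pi(N)/\pi(N)^\circ$ while preserving reductivity. I would first show that $R_u(N) \subseteq C$: $R_u(N)$ is a normal unipotent subgroup of $N$, so its image in the reductive group $\pi(N)$ is a normal unipotent subgroup, hence trivial. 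Thus $\pi$ factors through the reductive quotient $\overline{N} := N/R_u(N)$, inside which one applies the argument of \cite[Lemma 6.14]{Bate2005} to produce a reductive $\overline{M} \le \overline{N}$ with $\overline{M}^\circ = \overline{\Kder}$, $\overline{M} \cap \overline{C}$ finite, and $\overline{M}\,\overline{C} = \overline{N}$.

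The main obstacle is to lift $\overline{M}$ back to a reductive subgroup $M$ of $N$: this is precisely the subtle point flagged in Remark~\ref{rem:nonsplit}, since a Levi decomposition of the preimage of $\overline{M}$ in $N$ need not exist in positive characteristic. I expect the lift to succeed because $\Kder$ lifts canonically ($\Kder \cap R_u(N) = 1$, being the intersection of a connected reductive subgroup with a connected unipotent one), reducing the problem to lifting the finite quotient $\overline{M}/\overline{\Kder}$ via a careful choice of coset representatives in $N$; this is a Schur--Zassenhaus-style argument exploiting the explicit structure of $\overline{M}$.

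For (ii), given (i), the restriction $\pi|_M$ has image $\pi(M) = \pi(MC) = \pi(N)$ (since $\pi(C) = 1$) and kernel $M \cap C$, finite by (i); as $M$ and $\pi(N)$ are both reductive, $\pi|_M$ is an isogeny by definition. For (iii), observe first that $C \subseteq L_\lambda$ whenever $\lambda \in Y(K)$: since $\lambda(k^\ast) \subseteq K^\circ$ and $C$ centralises $K^\circ$, we have $C \subseteq C_G(\lambda(k^\ast)) = L_\lambda \subseteq P_\lambda$. The inclusion $(P_\lambda \cap M)C \subseteq P_\lambda \cap N$ is then immediate. Conversely, any $g \in P_\lambda \cap N$ decomposes as $g = mc$ with $m \in M$, $c \in C$ by (i), and then $m = g c^{-1} \in P_\lambda$ (since $g \in P_\lambda$ and $c^{-1} \in C \subseteq P_\lambda$), whence $m \in P_\lambda \cap M$ and $g \in (P_\lambda \cap M)C$. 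The argument with $L_\lambda$ in place of $P_\lambda$ is identical.
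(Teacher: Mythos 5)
Your parts (ii) and (iii) are correct and match the paper. The issue is part (i), where you reduce to the right question but do not actually solve it.

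You correctly observe that $R_u(N) \subseteq C$, pass to $\overline{N} = N/R_u(N)$, apply \cite[Lemma 6.14]{Bate2005} there, and then face the lifting problem. But the sentence ``I expect the lift to succeed \ldots this is a Schur--Zassenhaus-style argument'' is exactly where the proof is missing. Schur--Zassenhaus produces a \emph{complement}, and it needs a coprimality hypothesis between $|M \cap R_u(N)|$ and $|R_u(N)|$ which you cannot arrange here: in positive characteristic $p$ both can be $p$-groups. The paper's own Remark~\ref{rem:nonsplit} is precisely a counterexample to the complement you are hoping for: with $K = \SL_3(k)$, $\Char(k) = 2$, and $C \cong (k,+)$, the preimage in $N$ of the involution in $\overline{N}$ has order $4$, and the smallest $M$ satisfying the lemma is $\SL_3(k) \rtimes \langle x \rangle$, a \emph{non-split} extension of $M \cap C = \langle x^2 \rangle$. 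So no ``careful choice of coset representatives'' will give you a copy of $\overline{M}$ inside $N$ intersecting $C$ trivially; the best you can hope for is a reductive $M$ with $M \cap C$ finite but possibly non-trivial, and that existence is the whole content of the lemma.

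What the paper does instead is avoid lifting altogether. It works with the quotient $\sigma \colon N \to N/\Kder$ rather than $N \to N/R_u(N)$, observes that $N/\Kder$ is a finite extension of $\sigma(\Kder C)$ (since $N/C$ is a finite extension of $\Inn(K^\circ)$), and then invokes Brion's theorem \cite[Theorem 1.1]{Brion2015}: any algebraic group that is a finite extension of a normal subgroup admits a \emph{finite} subgroup $F$ that, together with the normal subgroup, generates the whole group. Setting $M = \sigma^{-1}(F)$ immediately gives $M^\circ = \Kder$, $M \cap C$ finite (being a finite extension of $\Kder \cap C \le Z(\Kder)$), and $N = MC$. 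Brion's result is the substitute for the complement/splitting you were reaching for; it is exactly the tool your argument lacks, and without it (or something equivalent) the lifting step in your proposal does not go through.
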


\begin{proof}Note firstly that $N^{\circ} = \Kder C^{\circ}$ since $N/C$ is a finite extension of $\Inn(K^{\circ}) = K^{\circ}/Z(K^{\circ}) = \Kder/Z(\Kder)$. 
Consider the quotient map $\sigma \colon N \twoheadrightarrow N/\Kder$. By the above, $N/\Kder$ is a finite extension of $\sigma(\Kder C)$. 
By \cite[Theorem 1.1]{Brion2015}, $N/\Kder$ admits a finite subgroup $F$ such that $N/\Kder = F(\sigma(\Kder C))$. Let $M$ be the pre-image of $F$ under 
$\sigma$. Then $M^{\circ} = \Kder$ by construction, and $M \cap C$ is finite since it is a finite extension of $\Kder \cap C$, which is itself finite as it is contained in $Z(\Kder)$. Finally, $N = \sigma^{-1}(F \sigma(\Kder C)) = M \sigma^{-1}(\sigma(C)) \le MC$, hence $N = MC$. Thus \ref{usefulsub-i} is proved. Now \ref{usefulsub-ii} and \ref{usefulsub-iii} follow directly from the fact that $N = MC$, $M \cap C$ is finite and $C \le L_{\lambda} \le P_{\lambda}$ for all $\lambda \in Y(K)$.
\end{proof}

\begin{remark} \label{rem:nonsplit}
If $N$ is reductive then \cite[Lemma 6.14]{Bate2005}, lets us take $M^{\circ}$ to be the product of the simple factors of $N^{\circ}$ which commute with $C^{\circ}$ (which is also reductive in this case). The following example shows that in general $M$ need not be isomorphic to its image in the reductive group $N/R_{u}(C)$; in particular $M$ could be a non-split extension of $M \cap R_{u}(C)$.

Let $K = \SL_{3}(k)$ where $\Char(k) = 2$ and let $C$ be a copy of $\mathbb G_a$. Let $X = \left<x\right>$ be cyclic of order $4$, and consider the product $(X \times C)/ \left<x^{2}y\right>$ for any non-zero $y \in k$. Write $H$ for the resulting abelian two-fold non-split extension of $C$, so $H^{\circ} = C$.

Now let $H$ act on $K$, such that $C$ acts trivially and $H/C$ acts as the inverse-transpose automorphism, and write $G = N$ for the semidirect product $K \rtimes H$. By construction, $K = K^{\circ}$ is normal in $N$, and $C = H^{\circ} = C_{N}(K^{\circ})$ is connected and unipotent. In $N/C$, the subgroup $M$ guaranteed by \cite[Lemma 6.14]{Bate2005} is the image of the outer automorphism, i.e.\ there is an involution giving a complement to $K = (N/C)^{\circ}$. But in $N$ this element has order $4$ by construction. So the smallest $M$ satisfying the conclusion of Lemma~\ref{lem:usefulsub} is a $4$-fold extension $\SL_{3}(k) \rtimes \left<x\right>$, where $x$ acts as the inverse-transpose automorphism on $\SL_{3}(k)$; this is a non-split extension of $M \cap C = \left<x^2\right>$.
\end{remark}

\begin{proof}[Proof of Theorem \ref{THM:MAIN}] Let $G$, $K$, $H$, $N = N_{G}(K^{\circ})$, $C = C_{G}(K^{\circ})$ and $\pi \colon N \to N/C$ be as in Theorem~\ref{THM:MAIN}, and let $M$ be the reductive subgroup given by Lemma~\ref{lem:usefulsub}.

To begin, for $\lambda \in Y(K)$ we have $C \le L_{\lambda} \le P_{\lambda}$. Hence $HC$ is contained in $P_{\lambda}$ (respectively $L_{\lambda}$) if and only if $H$ is contained in $P_{\lambda}$ (respectively $L_{\lambda}$). Moreover, since $N = MC$ we have $HC = (HC \cap M)C$, and $HC$ is contained in $P_{\lambda}$ (respectively $L_{\lambda}$) if and only if $HC \cap M$ is contained in $P_{\lambda}$ (respectively $L_{\lambda}$). Since also $\pi(H) = \pi(HC) = \pi(HC \cap M)$ it suffices to prove the conclusion of Theorem~\ref{THM:MAIN} for subgroups of the form $HC \cap M$, in particular for subgroups of $M$.

So assume $H \le M$. Then the reductive subgroup $MZ(K^{\circ})^{\circ}$ contains both $K^{\circ}$ and $H$. Since also $Y(K) = Y(K^{\circ})$, as in Remark~\ref{rem:GcrMcr} it follows that $H$ is relatively $G$-cr with respect to $K$ if and only if $H$ is relatively $MZ(K^{\circ})^{\circ}$-cr with respect to $K^{\circ}$. Moreover, because $\pi(MZ(K^{\circ})^{\circ}) = \pi(M) = \pi(N)$, it suffices to prove the conclusion of Theorem~\ref{THM:MAIN} when $G = N = MZ(K^{\circ})^{\circ}$, a reductive group in which $C^{\circ} = C_{G}(K^{\circ})^{\circ}$ is a central torus of $G^{\circ}$. We can still assume $H \le M$.

Now, under these assumptions we have $Y(G) = Y(K)$, so $H$ is relatively $G$-cr with respect to $K$ if and only if $H$ is $G$-cr. In this situation the statement of Theorem~\ref{THM:MAIN} becomes: ``Let $G$ be reductive and let $K$ be a reductive subgroup of $G$ such that $K^{\circ}$ is normal in $G$. Let $\pi \colon G \to G/C_{G}(K^{\circ})$ where $C_{G}(K^{\circ})^{\circ}$ is a central torus of $G^{\circ}$. Then $H$ is $G$-cr if and only if $\pi(H)$ is $(G/C_{G}(K^{\circ}))$-cr.'' This is proved in \cite[\S 6.2]{Bate2005}. More specifically, the result for connected groups is part of \cite[Lemma 2.12(ii)(b)]{Bate2005}, and \cite[\S 6.2]{Bate2005} generalises this part to non-connected reductive groups.
\end{proof}

\begin{remark}
After the reduction to the case $G = N$, the above proof of Theorem~\ref{THM:MAIN} is similar to the argument of \cite[Theorem 3.4]{Bate2008}, which shows that the $G$-complete reducibility of a subgroup $H \le G$ is preserved under taking quotients by subgroups of $H$ which are normal in $G$. Indeed, adapting that argument provides another proof of Theorem~\ref{THM:MAIN}, where one first reduces to the case $C \le H$ (rather than $H \le M$).
\end{remark}

\begin{remark}
If instead of $H \le N$ we assume that both $H$ and $K$ are reductive and $K^{\circ}$ normalises $H$, then $H$ is always relatively $G$-cr with respect to $K$. This follows from \cite[Corollary~3.28]{Bate2011}, since if $K$ normalises $H$ then in particular a maximal torus of $K$ normalises $H$, and this implies that $H$ is relatively $G$-cr with respect to $K$.
\end{remark}

Before discussing a number of consequences of Theorem~\ref{THM:MAIN} in the next section, we take this opportunity to note a slightly more general result, and to observe that some subtleties can arise when dealing with R-parabolic and R-Levi subgroups of disconnected groups. The following slight generalisation of Theorem~\ref{THM:MAIN} shows that we can factor out any normal subgroup of $C_{G}(K^{\circ})$ without affecting relative complete reducibility.

\begin{theorem} \label{thm:othersubs}
Keep the notation of Theorem~\ref{THM:MAIN} and let $M$ be as in Lemma~\ref{lem:usefulsub}. Let $f~\colon~N~\to~G'$ be a homomorphism into a reductive group $G'$ such that $\ker(f) \le C$. 
If $H \le M \ker(f)$, then $H$ is relatively $G$-completely reducible with respect to $K$ if and only if $f(H)$ is relatively $G'$-completely reducible with respect to $f(K)$.
\end{theorem}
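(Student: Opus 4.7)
The plan is to apply Theorem~\ref{THM:MAIN} on both sides of the claimed equivalence and then to relate the two resulting complete-reducibility conditions via an isogeny induced by $f$.

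First I would verify the hypotheses of Theorem~\ref{THM:MAIN} on the $G'$-side: since $K^{\circ} \cap \ker(f) \le K^{\circ} \cap C = Z(K^{\circ})$, the image $f(K)^{\circ} = f(K^{\circ})$ is a central quotient of $K^{\circ}$, hence reductive, and so is $f(K)$; moreover $f(H)$ normalises $f(K)^{\circ}$ because $H$ normalises $K^{\circ}$. Writing $\tilde{N} = N_{G'}(f(K)^{\circ})$, $\tilde{C} = C_{G'}(f(K)^{\circ})$ and $\pi' \colon \tilde{N} \to \tilde{N}/\tilde{C}$, Theorem~\ref{THM:MAIN} applied on both sides reduces the claim to showing that $\pi(H)$ is $\pi(N)$-completely reducible if and only if $\pi'(f(H))$ is $\pi'(\tilde{N})$-completely reducible. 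Since $f(C) \le \tilde{C}$, the map $f$ descends to a morphism $\bar{f} \colon N/C \to \tilde{N}/\tilde{C}$, $nC \mapsto f(n)\tilde{C}$, which carries $\pi(H)$ to $\pi'(f(H))$ and has image $\pi'(f(N))$.

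The main technical step, which I expect to be the crux of the proof, is to show that $\bar{f}$ is an isogeny onto $\pi'(f(N))$. Using the decomposition $N = MC$ from Lemma~\ref{lem:usefulsub}, a direct calculation gives $\ker(\bar{f}) \cong (M \cap f^{-1}(\tilde{C}))/(M \cap C)$, so it suffices to prove $M \cap f^{-1}(\tilde{C})$ is finite. The hypothesis that $f|_M$ is an isogeny onto its image forces $f(M)^{\circ} = f(M^{\circ}) = [f(K)^{\circ}, f(K)^{\circ}]$ to be semisimple. Any connected subgroup of the reductive group $f(M)$ lies in $f(M)^{\circ}$, so $C_{f(M)}(f(M)^{\circ})^{\circ} \le Z(f(M)^{\circ})$, which is finite; hence $C_{f(M)}(f(M)^{\circ})$ itself is finite, and so is $f(M) \cap \tilde{C}$. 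Pulling back through the finite-kernel map $f|_M$ then yields the required finiteness of $M \cap f^{-1}(\tilde{C})$.

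Once $\bar{f}$ is established as an isogeny of reductive groups onto $\pi'(f(N))$, the standard fact that isogenies preserve complete reducibility (via the cocharacter correspondence of R-parabolic and R-Levi subgroups) gives $\pi(H)$ is $\pi(N)$-cr if and only if $\pi'(f(H))$ is $\pi'(f(N))$-cr. To replace $\pi'(f(N))$ by $\pi'(\tilde{N})$, I would observe that both share the same identity component $\pi'([f(K)^{\circ}, f(K)^{\circ}])$: for $\pi'(\tilde{N})$ this uses $\tilde{N}^{\circ} = [f(K)^{\circ}, f(K)^{\circ}]\tilde{C}^{\circ}$, the analogue for $(G', f(K))$ of the first line of the proof of Lemma~\ref{lem:usefulsub}; for $\pi'(f(N))$ it follows from $f(N^{\circ}) = [f(K)^{\circ}, f(K)^{\circ}] f(C^{\circ})$ together with $f(C^{\circ}) \le \tilde{C} = \ker(\pi')$. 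Since $Y(A) = Y(B)$ whenever $A \le B$ are reductive with $A^{\circ} = B^{\circ}$, the ambient-enlarging principle of Remark~\ref{rem:GcrMcr} yields that $A$-complete reducibility coincides with $B$-complete reducibility for subgroups of $A$, which finishes the argument.
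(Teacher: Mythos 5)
Your proof is correct, but it takes a genuinely different route from the paper's. The paper argues by repeating the reduction steps from the proof of Theorem~\ref{THM:MAIN}: first replacing $H$ by $HC\cap M$, then shrinking the ambient group to $G=N=MZ(K^{\circ})^{\circ}$ and enlarging $f$ so that $f(G)=G'$, at which point both sides become statements about absolute complete reducibility which are related directly by the quotient results of \cite[\S 6.2]{Bate2005}. Your argument instead invokes Theorem~\ref{THM:MAIN} as a black box on each side and transfers the resulting absolute statements across the induced map $\bar{f}\colon N/C\to\tilde{N}/\tilde{C}$; the crux is then showing that $\bar{f}$ is an isogeny onto $\pi'(f(N))$ (your kernel computation through $M=MC\cap M$, reducing to the finiteness of $M\cap f^{-1}(\tilde{C})$ via the semisimplicity of $f(M)^{\circ}=[f(K)^{\circ},f(K)^{\circ}]$, is correct), together with the observation that $\pi'(f(N))$ and $\pi'(\tilde{N})$ share the same identity component $\pi'([f(K)^{\circ},f(K)^{\circ}])$ so that complete reducibility with respect to the two coincides. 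The paper's route is shorter because it recycles the reduction machinery already set up for Theorem~\ref{THM:MAIN}; yours is more modular and makes the role of the isogeny $\bar{f}$ between the two quotient groups explicit, at the cost of carrying out the extra finiteness and ambient-enlargement verifications.
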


\begin{proof} As in the proof of Theorem~\ref{THM:MAIN} it suffices to prove the result for the subgroup $H \ker(f) \cap M$ of $M$,  because this is contained in an R-parabolic or R-Levi subgroup corresponding to $\lambda \in Y(K)$ if and only if $H$ is since $\ker(f) \le C$. Moreover, the hypotheses imply that $f(H \ker(f) \cap M) = f(H)$. Also, since $\ker(f) \le C$, and since the restriction $\pi \colon M \to N/C$ gives an isogeny onto its image, the restriction $f \colon M \to N/\ker(f)$ is also an isogeny onto its image. Then as before we can reduce to the case that $G = N = MZ(K^{\circ})^{\circ}$, since this contains both $K^{\circ}$ and $H$, and this does not change $f(H)$ or the set of cocharacters of $f(K)$, hence does not change which R-parabolic subgroups or R-Levi subgroups of $G'$ stemming from $f(K)$ contain $f(H)$. Thus $f(G)$ is reductive and contains $f(K)$ and $f(H)$, and we can thus also assume that $f(G) = G'$.

So now $H$ is relatively $G$-cr with respect to $K^{\circ}$ precisely when $H$ is $G$-cr, and also $f(K^{\circ}) = (G')^{\circ}$ so $f(H)$ is relatively $G'$-cr with respect to $f(K)$ precisely when $f(H)$ is $G'$-cr. As before, the desired result reduces to the result proved in \cite[\S 6.2]{Bate2005}.
\end{proof}

\begin{remark}
When $\lambda \in Y(K) \setminus Y(M)$ it is not necessarily the case that $\pi(P_{\lambda} \cap N) = P_{\pi \circ \lambda}$ and $\pi(L_{\lambda} \cap N) = L_{\pi \circ \lambda}$, as the following example shows. So although relative complete reducibility behaves well with respect to taking quotients by subgroups centralising $K^{\circ}$, some care is required in the proofs.

Let $G$ be a connected reductive group with a maximal torus $T$ such that $T$ and $N_{G}(T)/T$ are non-trivial, and let $K = N = N_{G}(T)$. Then $K^{\circ} = C = T$ and $\pi(N) = N/C$ is the Weyl group of $G$, hence finite. For the subgroup $M$ of Lemma~\ref{lem:usefulsub} we can take any finite subgroup of $K$ which maps onto $\pi(N)$. Then $Y(M) = Y(\Kder)$ contains only the trivial cocharacter. Thus the only R-parabolic subgroup (and the only R-Levi subgroup) of $\pi(N)$ is $\pi(N)$ itself, and its pre-image under $\pi$, namely $N$, is not contained in any proper R-parabolic subgroup of $G$. Thus for any $\lambda \in Y(K)$ whose image is non-central in $G$, the images $\pi(P_{\lambda} \cap N)$ and $\pi(L_{\lambda} \cap N)$ are proper subgroups of $\pi(N)$ and are therefore not R-parabolic subgroups or R-Levi subgroups.
\end{remark}

\begin{remark} \label{rem:irred}
Recall that a subgroup $H$ of $G$ is called \emph{relatively $G$-irreducible} with respect to $K$ if, whenever $\lambda \in Y(K)$ such that $H \le P_{\lambda}$, we have $P_{\lambda} = G$ \cite[Definition 3.14]{Bate2011}. In this case, if $K = G$ then $H$ is called \emph{$G$-irreducible} \cite{Serre2003-2004}. We observe that the analogue of Theorem~\ref{THM:MAIN} does not hold for relative $G$-irreducibility. For instance if $K$ contains any non-central torus $S$ of $G$, then $C$ is contained in $C_{G}(S)$, a proper R-Levi subgroup of $G$, so $C$ is not relatively $G$-irreducible with respect to $K$. On the other hand, if we suppose that $\pi(N) = N/C$ is finite then every subgroup of $\pi(N)$ is $\pi(N)$-irreducible since $Y(\pi(N))$ is trivial, in particular the trivial subgroup (which is the image of $C$) is $\pi(N)$-irreducible in this case.
\end{remark}

The essential problem in the above discussion is the existence of non-trivial tori in the kernel of $N \to N/C$, i.e.\ a non-trivial torus in $Z(K^{\circ})$. The following result generalises \cite[Lemma 6.15]{Bate2005}, which considers the case that $N$ is reductive. This allows us to extend Theorem~\ref{THM:MAIN} to relative $G$-irreducibility, assuming $K^{\circ}$ is semisimple (Corollary \ref{cor:irred} below).

\begin{proposition} \label{prop:usefulsub} Keep the notation of Theorem~\ref{THM:MAIN} and let $M \le N$ be a reductive subgroup guaranteed by Lemma~\ref{lem:usefulsub}, so that $N = MC$ and $M \cap C$ is finite. Let $f \colon N \to G'$ be a homomorphism into a reductive group $G'$ such that $\ker(f) \le C$. If $\lambda \in Y(M)$ then
\begin{align*}
f(P_{\lambda} \cap N) &= f(P_{\lambda} \cap M)f(C) = P_{f \circ \lambda} \cap f(N), &
f^{-1}(P_{f \circ \lambda} \cap f(N)) &= P_{\lambda} \cap N, \\
f(L_{\lambda} \cap N) &= f(L_{\lambda} \cap M)f(C) = L_{f \circ \lambda} \cap f(N), &
f^{-1}(L_{f \circ \lambda} \cap f(N)) &= L_{\lambda} \cap N.
\end{align*}
In particular, taking $f = \pi \colon N \to N/C$ we have
\begin{align*}
\pi(P_{\lambda} \cap N) &= \pi(P_{\lambda} \cap M) = P_{\pi \circ \lambda}, &
\pi^{-1}(P_{\pi \circ \lambda}) &= P_{\lambda} \cap N, \\
\pi(L_{\lambda} \cap N) &= \pi(L_{\lambda} \cap M) = L_{\pi \circ \lambda}, &
\pi^{-1}(L_{\pi \circ \lambda}) &= L_{\lambda} \cap N.
\end{align*}
\end{proposition}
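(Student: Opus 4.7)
The plan is to prove the four displayed equalities involving $P_\lambda$ (those for $L_\lambda$ follow by the same argument), then to deduce the $\pi$-statements as the specialisation $f = \pi$, where $\pi(C) = \{1\}$ and $\pi(M) = \pi(N)$ by Lemma~\ref{lem:usefulsub}\ref{usefulsub-ii}. The first equality, $f(P_\lambda \cap N) = f(P_\lambda \cap M)f(C)$, is immediate from Lemma~\ref{lem:usefulsub}\ref{usefulsub-iii} upon applying $f$ to $P_\lambda \cap N = (P_\lambda \cap M)C$.

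For the inclusion $f(P_\lambda \cap M)f(C) \subseteq P_{f \circ \lambda} \cap f(N)$, I would use two ingredients. Morphisms of algebraic groups send R-parabolic subgroups into R-parabolic subgroups, giving $f(P_\lambda \cap M) \subseteq P_{f \circ \lambda}$. Also, since $\lambda \in Y(M) \subseteq Y(K)$ takes values in $\Kder \le K^{\circ}$ and $C = C_G(K^{\circ})$, the group $C$ centralises $\lambda(k^{\ast})$, so $f(C) \subseteq L_{f \circ \lambda} \subseteq P_{f \circ \lambda}$.

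The reverse inclusion and the preimage statement $f^{-1}(P_{f \circ \lambda} \cap f(N)) = P_\lambda \cap N$ both reduce to the following lifting property: for $m \in M$, one has $m \in P_\lambda$ if and only if $f(m) \in P_{f \circ \lambda}$. The non-trivial direction asks whether, given the morphism $\phi_m \colon k^{\ast} \to M$ defined by $\phi_m(a) = \lambda(a) \cdot m$, an extension of the composite $f \circ \phi_m$ to a morphism $k \to f(M)$ forces $\phi_m$ to extend to $k \to M$. Since $f|_M$ is an isogeny and therefore finite, any $h \in k[M]$ satisfies a monic polynomial relation over $k[f(M)]$. The pullback $\phi_m^{\ast}(h) \in k[t, t^{-1}]$ is then integral over $k[t]$, and integral closedness of $k[t]$ in $k(t)$ forces $\phi_m^{\ast}(h) \in k[t]$, yielding the desired extension. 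The R-Levi analogue follows in the same way, or more directly by observing that if $f \circ \phi_m$ is constant while $f$ has finite fibres, then $\phi_m$ is itself constant, because $k^{\ast}$ is connected.

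Granted the lifting property, writing an arbitrary $x \in P_{f \circ \lambda} \cap f(N)$ as $x = f(m)f(c)$ with $m \in M$, $c \in C$ (using $N = MC$) and noting $f(c) \in L_{f \circ \lambda}$, one deduces $f(m) \in P_{f \circ \lambda}$, hence $m \in P_\lambda \cap M$ by lifting; this gives the reverse inclusion. The preimage statement is entirely analogous, using Lemma~\ref{lem:usefulsub}\ref{usefulsub-iii} to rewrite $(P_\lambda \cap M)C$ as $P_\lambda \cap N$. The main obstacle I anticipate is cleanly verifying the lifting property for possibly disconnected reductive $M$; while essentially standard, the disconnected setting warrants explicit care, particularly given the non-split extensions that can occur, as in Remark~\ref{rem:nonsplit}. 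Once the $P$-statements are established, the $L$-statements and the specialisation to $\pi$ follow with no further work.
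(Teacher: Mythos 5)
Your proof is correct, and it takes a genuinely different route from the paper at the crucial step. Both arguments reduce, via Lemma~\ref{lem:usefulsub}\ref{usefulsub-iii} and the observations that $N = MC$, $\ker(f) \le C \le L_{\lambda}$, to the identity $P_{f\circ\lambda} \cap f(M) = f(P_{\lambda} \cap M)$ (and its R-Levi analogue). The paper obtains this identity by citing the (non-connected version of) \cite[Lemma~2.11]{Bate2005}, which records precisely the correspondence of R-parabolic and R-Levi subgroups under an isogeny of reductive groups. You instead re-prove the needed containment $f^{-1}(P_{f\circ\lambda}) \cap M \subseteq P_{\lambda}$ from first principles: you observe that $f|_M$ is a finite morphism, so that any regular function on $M$ is integral over $k[f(M)]$, and hence the pullback along $\phi_m(a) = \lambda(a)\cdot m$ lands in the integral closure of $k[t]$ in $k(t)$, which is $k[t]$ itself; this is in effect the valuative criterion applied to the finite map $M \to f(M)$. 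The R-Levi direction is handled even more cleanly by noting that a connected group mapping into a finite fibre must map to a point. Your argument is thus more self-contained than the paper's: it does not require locating a non-connected version of the cited lemma, which the paper references only informally. The trade-off is length, but you correctly anticipated the one subtlety (disconnectedness of $M$) and your integrality argument is insensitive to it. The remaining bookkeeping — the easy inclusion via functoriality of R-parabolics, the reverse inclusion by writing $x = f(m)f(c)$ and lifting, the preimage computation using $\ker(f) \le C$, and the specialisation $f = \pi$ — matches the paper's proof step for step.
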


\begin{proof} Since $\ker(f) \le C$ and $\pi \colon M \to N/C$ is an isogeny onto its image, it follows that $f \colon M \to f(M)$ is an isogeny. Thus by the non-connected version of \cite[Lemma 2.11]{Bate2005} the R-parabolic subgroups and R-Levi subgroups of $f(M)$ are precisely the subgroups $P_{f \circ \lambda} \cap f(M)$ and $L_{f \circ \lambda} \cap f(M)$ for $\lambda \in Y(M)$, and these are respectively equal to $f(P_{\lambda} \cap M)$ and $f(L_{\lambda} \cap M)$. Since $C \le L_{\lambda} \le P_{\lambda}$ for all $\lambda \in Y(K)$, we also have $f(C) \le L_{f \circ \lambda} \le P_{f \circ \lambda}$, and since also $N = MC$ we have $f(N) = f(M)f(C)$ and it follows that
\[ f(P_{\lambda} \cap N) = f( (P_{\lambda} \cap M)C) = f(P_{\lambda} \cap M)f(C) = (P_{f \circ \lambda} \cap f(M))f(C) = P_{f \circ \lambda} \cap f(N), \]
and $f(L_{\lambda} \cap N) = f(L_{\lambda} \cap M)f(C) = L_{f \circ \lambda} \cap f(N)$ follows similarly. Finally, since $\ker(f) \le C$, we have
\[ f^{-1}(P_{f \circ \lambda} \cap f(N)) = f^{-1}f(P_{\lambda} \cap N) = (P_{\lambda} \cap N)\ker(f) = P_{\lambda} \cap N\]
and similarly for $L_{\lambda}$.
\end{proof}

The latter statements above show in particular that if $H$ is a subgroup of $G$, then $H \le P_{\lambda}$ or $L_{\lambda}$ for $\lambda \in Y(M)$ precisely when $\pi(H) \le P_{\pi \circ \lambda}$ or $L_{\pi \circ \lambda}$, respectively. In particular, if $Y(K) = Y(M)$ (i.e.\ if $K^{\circ}$ is semisimple), we obtain the following.

\begin{corollary} \label{cor:irred}
In the notation of Theorem~\ref{THM:MAIN}, if $K^{\circ}$ is semisimple then a subgroup $H$ of $N$ is relatively $G$-irreducible with respect to $K$ if and only if $\pi(H)$ is $\pi(N)$-irreducible.
\end{corollary}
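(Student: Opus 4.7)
The plan is to use Proposition~\ref{prop:usefulsub} as the workhorse, transferring R-parabolic containments between $N$ and $\pi(N)$. The semisimplicity hypothesis on $K^{\circ}$ enters at the outset: it gives $\Kder = K^{\circ}$, hence $M^{\circ} = K^{\circ}$ for $M$ as in Lemma~\ref{lem:usefulsub}, and therefore $Y(K) = Y(K^{\circ}) = Y(M)$. Consequently Proposition~\ref{prop:usefulsub} applied to $\pi$ yields, for every $\lambda \in Y(K)$,
\[ \pi(P_{\lambda} \cap N) = P_{\pi \circ \lambda} \quad \text{and} \quad \pi^{-1}(P_{\pi \circ \lambda}) = P_{\lambda} \cap N. \]

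For the forward implication, I would take $\mu \in Y(\pi(N))$ with $\pi(H) \le P_{\mu}$ and exploit that $\pi|_{M} \colon M \to \pi(N)$ is an isogeny: some positive multiple $n\mu$ lifts to $\pi \circ \lambda$ for a $\lambda \in Y(M) = Y(K)$, and since $P_{n\mu} = P_{\mu}$ one may assume $\mu = \pi \circ \lambda$ outright. The displayed identities then place $H$ in $P_{\lambda} \cap N \le P_{\lambda}$, and relative $G$-irreducibility of $H$ forces $P_{\lambda} = G$; applying $\pi$ recovers $P_{\mu} = \pi(N)$.

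For the reverse implication, take $\lambda \in Y(K)$ with $H \le P_{\lambda}$, push through $\pi$ to obtain $\pi(H) \le P_{\pi \circ \lambda}$, and invoke $\pi(N)$-irreducibility to get $P_{\pi \circ \lambda} = \pi(N)$. Pulling back via the displayed identities shows $N \le P_{\lambda}$, and in particular $K^{\circ} \le P_{\lambda}$. The crux of the argument, and the step where I expect to need semisimplicity of $K^{\circ}$ essentially, is upgrading this to $P_{\lambda} = G$: the point is that $P_{\lambda} \cap K^{\circ}$ is exactly the R-parabolic subgroup of $K^{\circ}$ cut out by $\lambda \in Y(K^{\circ})$, and in a connected semisimple group every non-trivial cocharacter produces a proper R-parabolic. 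Thus $K^{\circ} \le P_{\lambda}$ forces $\lambda$ to be trivial, whereupon $P_{\lambda} = G$ as required. The breakdown of this final step in the presence of a non-trivial central torus in $K^{\circ}$ is precisely the failure mode flagged in Remark~\ref{rem:irred}.
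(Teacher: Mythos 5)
Your proof is correct and takes essentially the same route as the paper: both rely on Proposition~\ref{prop:usefulsub} together with $Y(K)=Y(M)$ to transfer parabolic containments across $\pi$, with the isogeny $\pi|_M$ providing the cocharacter lifting. You have also helpfully made explicit the final step (that $K^{\circ}\le P_{\lambda}$ with $\lambda\in Y(K^{\circ})$ and $K^{\circ}$ semisimple forces $\lambda$ trivial, hence $P_{\lambda}=G$), which the paper leaves implicit in its one-line deduction from the proposition.
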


We close this section with an extended example illustrating 
Theorem~\ref{THM:MAIN}.

\begin{example}
	Write $K = Cl(W)$ to indicate that $K$ is a special orthogonal or symplectic group with natural module $W$, in characteristic $p \ge 0$. Take an orthogonal direct sum $V = W_{1} \perp W_{2}$ where $W_{i} \cong W$ for $i = 1,2$, and let $G = Cl(V)$ (so $G$ has type $C_n$ or $D_n$ for some $n > 0$). We have a chain of subgroups
	\[ K \le Cl(W_{1}) \times Cl(W_{2}) \le Cl(V) = G, \]
	where the first embedding is just the diagonal one. Write $K_0$ for the left-hand group, and $K_i = Cl(W_{i})$ for $i = 1,2$.
	\begin{enumerate}
		\item If $p \neq 2$ then $K_{0}$ is $G$-cr, contained in a Levi subgroup of type $A_{n-1}$ corresponding to a direct-sum decomposition of $V$ into two totally isotropic $K_{0}$-submodules. Thus $C_{G}(K_{0})^{\circ}$ is a $1$-dimensional torus, consisting of elements acting as a scalar on $W_{1}$ and as the inverse scalar on $W_{2}$. \label{pnot2}
		\item If $p = 2$ then $K_{0}$ stabilises a unique nonzero totally isotropic subspace of $V$ \cite[Example 3.45]{Bate2005}, which is a diagonal submodule $W_{0} \subset W_{1} + W_{2}$. This shows that $K_{0}$ is contained in a parabolic subgroup of $G$ whose Levi factor has type $A_{n-1}$, but not in any Levi subgroup of $G$. In particular, $K_{0}$ is non-$G$-cr and $C_{G}(K_{0})^{\circ}$ is unipotent. In fact one can show that $C_{G}(K_{0})^{\circ}$ is a $1$-dimensional unipotent group; writing $V = W_{0} + W_{1}$ and identifying elements of $W_0$ and $W_1$ via a $K_{0}$-module isomorphism, this unipotent group consists of the maps $(w_{0},w_{1}) \mapsto (w_{0} + \lambda w_{1},w_{1})$ for $\lambda \in k$. The image of $K_{0}$ under projection to the Levi factor is a subgroup $K_{0}'$ stabilising a totally isotropic complement to $W_{0}$. Then $K_{0}'$ is $G$-cr, and its centraliser is simple of type $A_1$; $K_{0}'C_{G}(K_{0}')$ acts on $V$ as a tensor product $W_{0} \otimes V_2$, where $V_2$ is the natural $\SL_{2}(k)$-module. \label{p2-non-cr}
		\item If we repeat the above construction but with a $K$-module $W_{1} \perp W_{2}$ where $W_{2}$ is nontrivial and not isomorphic to $W_{1}$, then $K_{0}$ is $G$-irreducible and has trivial connected centraliser (independently of $p$). \label{irred-case}
	\end{enumerate}
	These three cases illustrate firstly that even a uniform (characteristic-independent) construction can lead to variation in $G$-complete reducibility and the structure of normalisers and centralisers. Part \ref{p2-non-cr} gives an example of a non-trivial unipotent subgroup of $G$, namely $C_{G}(K_{0})^{\circ}$, which is relatively $G$-cr with respect to $K_{0}$. The tables of \cite{Litterick2018} give further examples of unipotent groups (and more general groups with nontrivial unipotent radicals) arising as centralisers of reductive groups.
	
	In all three cases, Theorem~\ref{THM:MAIN} tells us that if $H \le N_{G}(K_{i})$ then $H$ is relatively $G$-cr with respect to $K_{i}$ if and only if the image of $H$ in $K_{i}/Z(K_{i})$ is completely reducible. The complete reducibility of this image can be  characterised purely in terms of the natural module $W_{i}$ if $i = 1$ or $2$. In part \ref{p2-non-cr}, $N_{G}(K_{0}) = K_{0}U_{1}$ is also non-$G$-cr, hence contained in the unique maximal parabolic subgroup of $G$ which contains $K_{0}$; in particular it stabilises $W_{0}$. Then a subgroup $H \le N_{G}(K_{0})$ acts on $W_{0}$, and to determine whether the image of $H$ in $N_{G}(K_{0})/C_{G}(K_{0})$ is completely reducible (hence whether $H$ is relatively $G$-cr with respect to $K_0$) we only need to consider the action of $H$ on $W_{0}$.
\end{example}

\section{Consequences of Theorem~\ref{THM:MAIN}} \label{sec:corollaries}

Armed with Theorem~\ref{THM:MAIN}, we deduce many core results for relative complete reducibility directly from their counterparts in the absolute setting. To begin, keeping the notation of Theorem~\ref{THM:MAIN}, it is clear that $\pi(N) = N/C$ and the trivial subgroup of $\pi(N)$ are $\pi(N)$-cr. Hence the following is immediate from Theorem~\ref{THM:MAIN}.
\begin{corollary} \label{cor:NORMAL-II}
Let $K \le G$ be reductive algebraic groups. Then $N_{G}(K^{\circ})$ and $C_{G}(K^{\circ})$ 
are relatively $G$-completely reducible with respect to $K$.
\end{corollary}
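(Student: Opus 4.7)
The plan is a direct application of Theorem~\ref{THM:MAIN} to the two subgroups $H_1 = N_{G}(K^{\circ})$ and $H_2 = C_{G}(K^{\circ})$. Both are contained in $N = N_{G}(K^{\circ})$, so both satisfy the hypothesis of Theorem~\ref{THM:MAIN} that $H$ normalises $K^{\circ}$. Under the quotient map $\pi \colon N \to N/C$, the first maps to $\pi(N) = N/C$ itself, while the second maps to the trivial subgroup of $\pi(N)$.

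By Theorem~\ref{THM:MAIN}, it then suffices to verify that both $\pi(N)$ and the trivial subgroup are $\pi(N)$-completely reducible. For $\pi(N)$ itself, this is a general fact about any reductive group viewed as a subgroup of itself: if $\pi(N) \le P_\lambda$ for some $\lambda \in Y(\pi(N))$, then the containment forces $P_\lambda = \pi(N)$, and the trivial cocharacter $\mu$ satisfies $P_\mu = L_\mu = \pi(N)$, which meets the requirement of Definition~\ref{def:gcr}. For the trivial subgroup, we may simply take $\mu = \lambda$ for any $\lambda$ with the trivial group contained in $P_\lambda$ (which is every $\lambda$), since the identity lies in every $L_\lambda$.

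There is no genuine obstacle here: all the substantive work has been done in Theorem~\ref{THM:MAIN}, and the corollary reduces to these two essentially trivial observations on the $\pi(N)$-side. The only thing to be careful about is confirming that Definition~\ref{def:gcr} really is satisfied in these degenerate cases, which amounts to unwinding the definitions.
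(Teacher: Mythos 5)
Your proposal is correct and is essentially the paper's own argument: the paper likewise observes that $N_{G}(K^{\circ})$ and $C_{G}(K^{\circ})$ map under $\pi$ to $\pi(N)$ and the trivial subgroup respectively, both of which are trivially $\pi(N)$-cr, and then invokes Theorem~\ref{THM:MAIN}. Your careful unwinding of why the whole group and the trivial subgroup are $\pi(N)$-cr is a correct expansion of what the paper leaves implicit.
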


Since $K$ is $K$-cr, i.e.\ relatively $G$-cr with respect to itself, Corollary~\ref{cor:NORMAL-II} can be viewed as a generalisation of \cite[Corollary 3.16 and Corollary 3.17]{Bate2005}, which assert that both $N_{G}(K^{\circ})$ and $C_{G}(K^{\circ})$ are $G$-cr provided $K$ is. Note that $C_{G}(K^{\circ})$ is contained in $L_{\lambda}$ for all $\lambda \in Y(K)$ and is therefore clearly relatively $G$-cr with respect to $K$. However the conclusion for $N_{G}(K^{\circ})$ is less obvious. Note that $N_{G}(K^{\circ})$ and $C_{G}(K^{\circ})$ need not be reductive in general. When they are reductive, the assertion of Corollary~\ref{cor:NORMAL-II} follows from \cite[Corollary 3.28]{Bate2011}.

\subsection{Relative complete reducibility, reductivity and semisimple modules} As mentioned in the introduction, it is well-known that in the absolute case a $G$-cr subgroup is reductive. 
Furthermore, a combination of results of Jantzen \cite{Jantzen1997}, McNinch \cite{McNinch1998} and Liebeck and Seitz \cite{Liebeck1996} tells us that if $\Char(k)$ is sufficiently large and is coprime to $|H/H^{\circ}|$ for a closed subgroup $H$ of $G$, then $H$ is $G$-cr if and only if $H$ is reductive. 
More specifically, for a simple algebraic group $X$ define $a(X)$ to be $1$ plus the rank of $X$ (the dimension of a maximal torus of $X$). 
For a general reductive group $X$, let $a(X)$ be the maximum value of $a(Y)$ over all simple factors $Y$ of $X$. 
Then \cite[Th\'{e}or\`{e}me 4.4]{Serre2003-2004} states that if $H$ is a subgroup of $G$, and if $\Char(k)$ is zero or a prime $p \ge a(G)$ which is coprime to $|H/H^{\circ}|$, then $H$ is $G$-cr if and only if it is reductive. 
The relative version of this result, of which Corollary~\ref{cor:NORMAL} is a special case, is as follows.
\begin{theorem} \label{thm:bigp} In the notation of Theorem~\ref{THM:MAIN}, the following hold.
\begin{enumerate}
	\item If $H$ is relatively $G$-completely reducible with respect to $K$ then $R_{u}(H) \le C$.
	\item If $\Char(k)$ is zero or a prime $p \ge a(K)$ not dividing $|\pi(H)/\pi(H)^{\circ}|$, then $H$ is relatively $G$-completely reducible with respect to $K$ if and only if $R_{u}(H) \le C$.
\end{enumerate}
\end{theorem}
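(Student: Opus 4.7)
The strategy is to use Theorem~\ref{THM:MAIN} to reduce both parts to absolute statements for $\pi(H) \le \pi(N)$, where part (i) follows from Serre's Property 4 and part (ii) follows from \cite[Th\'{e}or\`{e}me~4.4]{Serre2003-2004}. Once this reduction is in place, the only substantive work is to track how the unipotent radical and the constant $a(\cdot)$ behave under the quotient map $\pi$.

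For (i), assume $H$ is relatively $G$-cr with respect to $K$. Theorem~\ref{THM:MAIN} gives that $\pi(H)$ is $\pi(N)$-cr, so by \cite[Property 4]{Serre1998} it is reductive. The image $\pi(R_{u}(H))$ is connected, unipotent and normal in $\pi(H)$, hence lies in $R_{u}(\pi(H)) = 1$. Therefore $R_{u}(H) \le \ker(\pi|_{H}) \le C$, proving (i).

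For (ii), the forward direction is (i), so assume $R_{u}(H) \le C$. The first step is to verify that this forces $\pi(H)$ to be reductive. Note $\pi(H)^{\circ} = \pi(H^{\circ})$ is a quotient of the connected reductive group $H^{\circ}/R_{u}(H^{\circ})$, since $R_{u}(H^{\circ}) = R_{u}(H) \le C \le \ker\pi$; hence $\pi(H)^{\circ}$ is connected reductive. Since $R_{u}(\pi(H))$ is connected and normal in $\pi(H)$, it lies in $\pi(H)^{\circ}$ and hence in $R_{u}(\pi(H)^{\circ}) = 1$, so $\pi(H)$ itself is reductive. One then applies \cite[Th\'{e}or\`{e}me~4.4]{Serre2003-2004} to the pair $\pi(H) \le \pi(N)$ to conclude that $\pi(H)$ is $\pi(N)$-cr, and finishes by invoking Theorem~\ref{THM:MAIN}.

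The key technical point, and the place most requiring care, is to ensure that the characteristic hypothesis transfers correctly: \cite[Th\'{e}or\`{e}me~4.4]{Serre2003-2004} demands $p \ge a(\pi(N))$ and $p \nmid |\pi(H)/\pi(H)^{\circ}|$. The second condition is assumed directly. For the first, one observes that $\pi(N)^{\circ} = \pi(K^{\circ}) \cong K^{\circ}/Z(K^{\circ})$, whose simple factors are the adjoint quotients of the simple factors of $K^{\circ}$ and therefore have identical ranks; thus $a(\pi(N)) = a(K)$, matching the hypothesis $p \ge a(K)$. Beyond this bookkeeping, the proof is essentially a two-step translation through Theorem~\ref{THM:MAIN}.
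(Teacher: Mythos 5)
Your proof is correct and follows essentially the same route as the paper: reduce via Theorem~\ref{THM:MAIN} to the absolute statements for $\pi(H) \le \pi(N)$, invoking Serre's Property~4 for (i) and Th\'{e}or\`{e}me~4.4 for (ii). In fact your write-up is slightly more careful than the paper's, which states without elaboration that $R_u(H) \le C$ is equivalent to reductivity of $\pi(H)$ and passes silently over the hypothesis $p \ge a(\pi(N))$; your verification that $\pi(H)^{\circ} = \pi(H^{\circ})$ is a quotient of $H^{\circ}/R_u(H^{\circ})$, and your observation that $a(\pi(N)) = a(K)$ because $\pi(N)^{\circ} \cong K^{\circ}/Z(K^{\circ})$ has the same simple-factor ranks as $K^{\circ}$, are both correct and close exactly the gaps the paper leaves to the reader.
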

\begin{proof} For (i), Theorem~\ref{THM:MAIN} together with the absolute result \cite[Property 4]{Serre1998} implies that if $H$ is relatively $G$-cr with respect to $K$, then $\pi(H)$ is reductive, hence $R_{u}(H) \le C$. For (ii), if $R_{u}(H) \le C$ then $\pi(H)$ is reductive. Moreover $\Char(k)$ is either zero or coprime to $|\pi(H)/\pi(H)^{\circ}|$. Then the absolute result \cite[Th\'{e}or\`{e}me 4.4]{Serre2003-2004} tells us that $\pi(H)$ is $\pi(N)$-cr, and from Theorem~\ref{THM:MAIN} we see that $H$ is relatively $G$-cr with respect to $K$.
\end{proof}

Theorem~\ref{thm:bigp} gives an intrinsic group-theoretic characterisation of relative $G$-complete reducibility in characteristic zero, generalising the result from the absolute setting. Next, complete reducibility in the absolute case is closely linked to the semisimplicity of $G$-modules. More precisely, let $T$ be a maximal torus of $G$, let $\Phi^{+}$ be a choice of positive roots of $G$ with respect to $T$, and for a $G$-module $V$ define $n(V) = \max\left\{ \sum_{\alpha \in \Phi^{+}} \left<\lambda,\alpha^{\vee}\right>\right\}$, the maximum over $T$-weights $\lambda$ of $V$. Then a result of Serre \cite[Th\'{e}or\`{e}me 5.4]{Serre2003-2004} states that if $H$ is $G$-cr and $\Char(k)$ is zero or greater than $n(V)$, then $V$ is semisimple as an $H$-module. Moreover if $V$ is \emph{non-degenerate} (i.e.\ the identity component of the kernel is a torus), the converse also holds. Since $n(\Lie(G)) = 2 h_{G} - 2$, where $h_{G}$ is the Coxeter number of $G$, this gives a concrete criterion for $G$-complete reducibility in terms of the action of $H$ on $\Lie(G)$ \cite[Corollary 5.5]{Serre2003-2004}. The relative versions of these results are as follows.

\begin{theorem} \label{thm:module} In the notation of Theorem~\ref{THM:MAIN}, the following hold.
\begin{enumerate}
	\item Let $V$ be a $\pi(N)$-module, and suppose that $\Char(k)$ is zero or greater than $n(V)$. If $H$ is relatively $G$-completely reducible with respect to $K$, then $V$ is semisimple as a $\pi(H)$-module. Conversely, if $V$ is non-degenerate and semisimple as a $\pi(H)$-module, then $H$ is relatively $G$-completely reducible with respect to $K$.
	\item If $\Char(k)$ is zero or greater than $2h_{K} - 2$, where $h_{K}$ is the Coxeter number of $K$, then $H$ is relatively $G$-completely reducible with respect to $K$ if and only if $\Lie(K)$ is semisimple as an $H$-module.
\end{enumerate}
\end{theorem}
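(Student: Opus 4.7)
The strategy is to use Theorem~\ref{THM:MAIN} to pull both parts back to the absolute setting, where they follow from Serre's results \cite[Th\'{e}or\`{e}me~5.4 and Corollary~5.5]{Serre2003-2004} applied to the reductive group $\pi(N)$.

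For (i), first note that $C$ acts trivially on any $\pi(N)$-module $V$, so $V$ is semisimple as an $H$-module if and only if it is semisimple as a $\pi(H)$-module. By Theorem~\ref{THM:MAIN}, $H$ is relatively $G$-completely reducible with respect to $K$ if and only if $\pi(H)$ is $\pi(N)$-completely reducible. Since the invariant $n(V)$ is computed using a maximal torus of $\pi(N)^{\circ}$, the characteristic hypothesis transfers unchanged, and both directions follow by applying \cite[Th\'{e}or\`{e}me~5.4]{Serre2003-2004} to the triple $(\pi(N), \pi(H), V)$.

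For (ii), the plan is to apply (i) with $V = \Lie(K) = \Lie(K^{\circ})$ equipped with the adjoint action. Since $C$ centralises $K^{\circ}$, it acts trivially on $\Lie(K)$, so this naturally inherits the structure of a $\pi(N)$-module, and semisimplicity as an $H$-module coincides with semisimplicity as a $\pi(H)$-module. A maximal torus of $\pi(N)^{\circ} = \pi(K^{\circ})$ acts on $\Lie(K)$ with weights equal to the roots of $K^{\circ}$ together with $0$ (with multiplicity $\rank K^{\circ}$); the maximum of $\sum_{\alpha \in \Phi^+} \langle \beta, \alpha^{\vee} \rangle$ over these weights is attained at the highest root and equals $2h_K - 2$, so $n(\Lie(K)) \le 2h_K - 2$. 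The forward direction of (ii) then follows immediately from (i). For the converse I would verify that $\Lie(K)$ is non-degenerate as a $\pi(N)$-module: under the stated characteristic hypothesis, the adjoint action of the semisimple adjoint group $\pi(K^{\circ}) \cong K^{\circ}/Z(K^{\circ})$ on $\Lie(K)$ has finite kernel, so the identity component of the kernel of the $\pi(N)$-action on $\Lie(K)$ is trivial and hence a (zero-dimensional) torus.

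The only step I expect to be not purely formal is this final non-degeneracy check: one must confirm that $\Char(k) = 0$ or greater than $2h_K - 2$ suffices for faithfulness of the adjoint representation of $\pi(K^{\circ})$ on $\Lie(K)$ modulo a finite kernel. This bound lies comfortably inside the very-good-characteristic range for $K^{\circ}$, so the verification should be routine rather than a serious obstacle, and it is precisely the place where the bound $2h_K - 2$ is genuinely needed for the converse direction.
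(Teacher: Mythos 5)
Your proof takes essentially the same route as the paper's: both parts are reduced to the absolute case via Theorem~\ref{THM:MAIN} and Serre's Th\'{e}or\`{e}me~5.4, using $n(\Lie(K)) = 2h_K - 2$ and the isomorphism $\pi(N)^{\circ} \cong K^{\circ}/Z(K^{\circ})$ to identify the relevant Coxeter number.

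One point in your final paragraph is mistaken, though it does not invalidate the argument. You assert that the characteristic hypothesis $\Char(k) = 0$ or $\Char(k) > 2h_K - 2$ is "precisely the place where the bound is genuinely needed" to guarantee non-degeneracy of $\Lie(K)$ as a $\pi(N)$-module. In fact non-degeneracy is automatic and requires no hypothesis on $\Char(k)$ at all: since $\pi(N)^{\circ} \cong K^{\circ}/Z(K^{\circ})$ is connected semisimple, the (group-theoretic) kernel of its adjoint action on $\Lie(K^{\circ})$ is a normal subgroup, and any positive-dimensional normal subgroup of a semisimple group would contain a simple factor acting nontrivially on its own Lie algebra, a contradiction; hence the kernel is finite, its identity component is trivial, and the module is non-degenerate. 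The role of the bound $\Char(k) > 2h_K - 2 \geq n(\Lie(K))$ is solely to satisfy the hypothesis of Serre's theorem in part (i), not to ensure non-degeneracy. The paper simply records "since $\Lie(K)$ is non-degenerate as an $(N/C)$-module" without further comment, implicitly using this fact.
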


\begin{proof} Part (i) follows directly from Theorem~\ref{THM:MAIN} and the result in the absolute case applied to $N/C$. For (ii), note that since $N$ normalises $K^{\circ}$ it acts on the adjoint module $\Lie(K)$, hence so does $H$. Moreover this action factors through $\pi(N)$ since $C$ centralises $K^{\circ}$. Now the isomorphism $\pi(N)^{\circ} \cong K^{\circ}/Z(K^{\circ})$ implies that $h_{K} = h_{\pi(N)}$. Therefore, since $\Lie(K)$ is non-degenerate as an $(N/C)$-module, part (ii) follows from part (i) and the fact that $n(\Lie(K)) = 2h_{K} - 2$.
\end{proof}

Putting Theorem~\ref{thm:bigp}(ii) and Theorem~\ref{thm:module}(ii) together, and using the fact that $\pi(H)/\pi(H^{\circ})$ is a quotient of $H/H^{\circ}$, gives the following.

\begin{corollary}
Let $K \le G$ be reductive algebraic groups, let $H \le N_{G}(K^{\circ})$ and suppose that $\Char(k)$ is zero or a prime $p \ge 2h_{K} - 2$ which does not divide $|H/H^{\circ}|$. Then the following are equivalent:
\begin{enumerate}
	\item $H$ is relatively $G$-completely reducible with respect to $K$;
	\item $\Lie(K)$ is semisimple as an $H$-module;
	\item $R_{u}(H)$ centralises $K^{\circ}$.
\end{enumerate}
\end{corollary}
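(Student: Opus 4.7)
The plan is to read the corollary as a direct amalgamation of Theorem~\ref{thm:bigp}(ii) and Theorem~\ref{thm:module}(ii): the former supplies the equivalence (i)$\Leftrightarrow$(iii), and the latter supplies (i)$\Leftrightarrow$(ii). The only genuine task is to verify that the hypothesis of the corollary implies the hypotheses of both theorems; after that the conclusion is immediate.

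First I would check the characteristic bounds. Theorem~\ref{thm:module}(ii) demands $\Char(k) = 0$ or greater than $2h_K - 2$, which is exactly the assumption. Theorem~\ref{thm:bigp}(ii) in turn requires $p \ge a(K)$, where $a(K) = 1 + \max\{\rank(Y)\}$ as $Y$ ranges over the simple factors of $K^{\circ}$. Using the standard bound $\rank(Y) \le h_Y - 1$ for each simple factor gives $a(K) \le h_K \le 2h_K - 2$ whenever $K^{\circ}$ has at least one simple factor; the degenerate case in which $K^{\circ}$ is a torus is trivial, since then every subgroup of $N$ is relatively $G$-cr with respect to $K$, the module $\Lie(K)$ is a direct sum of trivial $H$-modules, and $R_u(H)$ automatically centralises $K^{\circ}$. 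Hence in every case the hypothesis of the corollary forces $p \ge a(K)$.

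Second I would deal with the divisibility condition of Theorem~\ref{thm:bigp}(ii), namely that $p$ does not divide $|\pi(H)/\pi(H)^{\circ}|$. Here I would use the observation that $\pi(H^{\circ})$ is a connected subgroup of $\pi(H)$ of maximal dimension, and therefore coincides with $\pi(H)^{\circ}$. Consequently $\pi(H)/\pi(H)^{\circ}$ is a quotient of $\pi(H)/\pi(H^{\circ})$, which itself is a quotient of $H/H^{\circ}$; thus $|\pi(H)/\pi(H)^{\circ}|$ divides $|H/H^{\circ}|$, and the assumption $p \nmid |H/H^{\circ}|$ already suffices.

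With both hypothesis checks in place, Theorem~\ref{thm:module}(ii) delivers (i)$\Leftrightarrow$(ii) and Theorem~\ref{thm:bigp}(ii) delivers (i)$\Leftrightarrow$(iii), completing the proof. There is no substantive obstacle: the work of the corollary is purely organisational, and the only point requiring a moment's thought is the observation that passing from $H$ to $\pi(H)$ can only shrink the component group, so the numerical condition on $H/H^{\circ}$ safely controls the one on $\pi(H)/\pi(H)^{\circ}$.
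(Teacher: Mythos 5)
Your proof is correct and matches the paper's approach exactly: the corollary is obtained by combining Theorem~\ref{thm:bigp}(ii) (for (i)$\Leftrightarrow$(iii)) with Theorem~\ref{thm:module}(ii) (for (i)$\Leftrightarrow$(ii)), and the divisibility condition transfers because $\pi(H)/\pi(H)^{\circ}=\pi(H)/\pi(H^{\circ})$ is a quotient of $H/H^{\circ}$, just as you observe. One small inaccuracy in your side remark on the torus case: when $K^{\circ}$ is a torus, $\Lie(K)$ need not be a direct sum of \emph{trivial} $H$-modules, since the finite group $\pi(H)$ can act non-trivially on $\Lie(K^{\circ})$ (e.g.\ via Weyl-group elements); however, it is semisimple by Maschke's theorem because $p\nmid|\pi(H)|$ follows from $p\nmid|H/H^{\circ}|$, so your conclusion for that degenerate case stands.
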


A result of Jantzen~\cite[Proposition 3.2]{Jantzen1997} states that if $G$ is connected and $V$ is a $G$-module, and if $\dim V \le p$ when $\Char(k) = p > 0$, then $V$ is semisimple. In \cite{Bate2011a} this is generalised to show that $V$ is also semisimple as an $H$-module for every $G$-cr subgroup $H$ of $G$. The relative variant of this result is as follows; this follows directly from the absolute result \cite[Theorem 1.3]{Bate2011a} and Theorem~\ref{THM:MAIN}.
\begin{corollary}
Keep the notation of Theorem~\ref{THM:MAIN}. Suppose that $\pi(N)$ is connected and let $V$ be a $\pi(N)$-module. If $\Char(k) = p > 0$, assume that $\dim V \le p$.

If $H \le N$ is relatively $G$-completely reducible with respect to $K$ then $V$ is semisimple as an $H$-module.
\end{corollary}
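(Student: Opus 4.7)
The plan is to reduce immediately to the absolute case via Theorem~\ref{THM:MAIN} and then invoke the absolute semisimplicity result \cite[Theorem 1.3]{Bate2011a}. First I would translate the hypothesis: since $H \le N$ is relatively $G$-completely reducible with respect to $K$, Theorem~\ref{THM:MAIN} tells us that the image $\pi(H)$ is $\pi(N)$-completely reducible.

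Next I would check that the absolute result applies to $\pi(H) \le \pi(N)$ acting on $V$. The group $\pi(N)$ is connected by assumption, and it is reductive as noted in the paragraph following the statement of Theorem~\ref{THM:MAIN} (it is a finite—here trivial—extension of $\Inn(K^{\circ})$). The module $V$ is a $\pi(N)$-module, and the dimension hypothesis $\dim V \le p$ (or $\Char(k) = 0$) is precisely the condition of \cite[Theorem 1.3]{Bate2011a}. Applying that theorem to the $\pi(N)$-completely reducible subgroup $\pi(H)$ yields that $V$ is semisimple as a $\pi(H)$-module.

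Finally, I would pull this back to $H$. The $H$-module structure on $V$ is obtained by restricting the $\pi(N)$-action along the homomorphism $\pi|_{H} \colon H \to \pi(H)$, so the $H$-action on $V$ factors through $\pi(H)$. Consequently a subspace of $V$ is $H$-stable if and only if it is $\pi(H)$-stable, and an $H$-equivariant projector is the same as a $\pi(H)$-equivariant projector. Hence the semisimplicity of $V$ as a $\pi(H)$-module transfers verbatim to semisimplicity as an $H$-module, completing the argument.

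There is no genuine obstacle here: Theorem~\ref{THM:MAIN} does all the real work, and \cite[Theorem 1.3]{Bate2011a} is off-the-shelf. The only mild points to verify are the connectedness/reductivity of $\pi(N)$ (immediate from the hypothesis and the discussion after Theorem~\ref{THM:MAIN}) and the trivial observation that restricting along a surjection preserves semisimplicity. No characteristic-dependent complication appears, because the small-dimension condition on $V$ is carried over unchanged from the absolute statement.
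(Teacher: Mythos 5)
Your proposal is correct and follows exactly the route the paper intends: apply Theorem~\ref{THM:MAIN} to pass to $\pi(H) \le \pi(N)$, invoke \cite[Theorem 1.3]{Bate2011a}, and pull back along $\pi|_H$. The paper states this corollary ``follows directly'' from those two results, and your write-up simply spells out the (correct) routine details.
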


The following result also follows directly from its absolute counterpart \cite[Theorem 1.4]{Bate2011a} and Theorem~\ref{THM:MAIN}. Recall that a module $V$ for an algebraic group is called \emph{non-degenerate} if the identity component of the kernel of the action is a torus.
\begin{corollary} \label{cor:tensor-cr-g-cr}
With the notation of Theorem~\ref{THM:MAIN}, suppose that $\pi(N)$ is connected and let $V$ be a non-degenerate $\pi(N)$-module. If $V \otimes V^{\ast}$ is semisimple as a $\pi(H)$-module then $H$ is relatively $G$-completely reducible with respect to $K$.
\end{corollary}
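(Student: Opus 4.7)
The plan is to reduce the statement to its absolute analogue via Theorem~\ref{THM:MAIN}, exactly parallel to the proofs of the preceding corollaries in Section~\ref{sec:corollaries}. First I would record that $\pi(N)$ is reductive (as noted just after Theorem~\ref{THM:MAIN}, since it is a finite extension of the connected semisimple group $\Inn(K^{\circ})$), and connected by hypothesis. Since $V$ is a $\pi(N)$-module, the subgroup $H \le N$ acts on $V$ through $\pi$, i.e.\ its action factors through $\pi(H)$, and likewise on $V \otimes V^{\ast}$. The semisimplicity assumption on $V \otimes V^{\ast}$ as a $\pi(H)$-module is therefore exactly what is available for use in the ambient group $\pi(N)$.

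Next I would invoke the absolute result \cite[Theorem 1.4]{Bate2011a} with ambient connected reductive group $\pi(N)$, subgroup $\pi(H)$, and non-degenerate module $V$. Each of these hypotheses matches precisely what the corollary assumes, so the conclusion is that $\pi(H)$ is $\pi(N)$-completely reducible. Finally, Theorem~\ref{THM:MAIN} applied to $H \le N$ translates this directly into the statement that $H$ is relatively $G$-completely reducible with respect to $K$.

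There is essentially no obstacle here: the entire content has been packaged into Theorem~\ref{THM:MAIN}, which handles the passage between the relative and absolute settings, together with the already-proven absolute result. The only check worth mentioning is the trivial verification that the $\pi(N)$-module structure on $V$ and $V \otimes V^{\ast}$ transports correctly through $\pi$ to give $\pi(H)$-module structures compatible with the action of $H$ via $\pi$, which is immediate from the definitions.
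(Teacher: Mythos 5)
Your proposal is correct and takes essentially the same approach as the paper: the paper also deduces the corollary directly by applying the absolute result \cite[Theorem 1.4]{Bate2011a} to $\pi(H) \le \pi(N)$ with the non-degenerate module $V$, and then translating back via Theorem~\ref{THM:MAIN}.
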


\begin{remark}
If $V$ is a non-degenerate $\pi(N)$-module then $V$ is a non-degenerate $K$-module, since the identity components of the kernels of $K \to \pi(K) = K/C_{K}(K^{\circ})$ and $\pi(N) \to \GL(V)$ are both tori. Therefore in the particular case $H \le K$, noting that $H$ is relatively $G$-cr with respect to $K$ precisely when $H$ is $K$-cr, Corollary~\ref{cor:tensor-cr-g-cr} specialises to the complete reducibility statement of \cite[Theorem 1.4]{Bate2011a}.
\end{remark}

\subsection{Normal subgroups, normalisers, Clifford theory} Recall that in the absolute setting, a normal subgroup of a $G$-cr subgroup is again $G$-cr, and furthermore a subgroup $H \le G$ is $G$-cr if and only if $N_{G}(H)$ is $G$-cr \cite[Theorem 3.10 and Corollary 3.16]{Bate2005}; this generalises Clifford's Theorem from representation theory. Moreover if $H$ is $G$-cr and $H'$ is a subgroup of $G$ which satisfies $HC_{G}(H)^{\circ} \le H' \le N_{G}(H)$ then $H'$ is $G$-cr \cite[Theorem 3.14]{Bate2005}. The relative versions of these results are as follows.

\begin{theorem} \label{thm:clifford}
With the notation of Theorem~\ref{THM:MAIN}, the following hold.
\begin{enumerate}
	\item Let $H'$ be a normal subgroup of $H$. If $H$ is relatively $G$-completely reducible with respect to $K$ then so is $H'$. \label{cliff-i}
	\item Let $H'$ be a subgroup of $N$ satisfying $(HC)N_{N}(HC)^{\circ} \le H'C \le N_{N}(HC)$. If $H$ is relatively $G$-completely reducible with respect to $K$, then so is $H'$. \label{cliff-ii}

	\item $H$ is relatively $G$-completely reducible with respect to $K$ if and only if $N_{N}(HC)$ is relatively $G$-completely reducible with respect to $K$. \label{cliff-iii}
	\item If $H$ is relatively $G$-completely reducible with respect to $K$, then so is $C_{N}(HC)$. \label{cliff-iv}
\end{enumerate}
\end{theorem}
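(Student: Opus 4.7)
The plan is to deduce all four parts from Theorem~\ref{THM:MAIN} by translating to the quotient $\pi(N) = N/C$, where each conclusion follows from a known absolute result on $\pi(N)$-complete reducibility. The principal task is to verify that $\pi$ interacts well with the normalizer and centralizer constructions appearing in the hypotheses; after that each reduction is formal.

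I would begin with some preliminary observations. Since $C \trianglelefteq N$, the product $HC$ is a closed subgroup of $N$, and $\pi(HC) = \pi(H)$. Because $C \le HC \le N_N(HC)$, the subgroup $N_N(HC)$ is $\pi$-saturated, and a direct verification using the surjectivity of $\pi$ gives $\pi(N_N(HC)) = N_{\pi(N)}(\pi(H))$. Similarly $C_N(HC)$ is a normal subgroup of $N_N(HC)$. Finally, for any closed subgroup $Y \le N$, the image $\pi(Y^{\circ})$ is connected and of finite index in $\pi(Y)$, hence $\pi(Y^{\circ}) = \pi(Y)^{\circ}$.

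For \ref{cliff-i}, since $\pi(H') \trianglelefteq \pi(H)$, the absolute fact that a normal subgroup of a $G$-cr subgroup is $G$-cr \cite[Theorem~3.10]{Bate2005}, applied in $\pi(N)$, yields the conclusion via Theorem~\ref{THM:MAIN}. For \ref{cliff-ii}, applying $\pi$ transforms the hypothesised chain into
\[\pi(H)\cdot N_{\pi(N)}(\pi(H))^{\circ} \le \pi(H') \le N_{\pi(N)}(\pi(H)),\]
which is stronger than the hypothesis of the absolute result \cite[Theorem~3.14]{Bate2005}, because $C_{\pi(N)}(\pi(H))^{\circ} \le N_{\pi(N)}(\pi(H))^{\circ}$; hence $\pi(H')$ is $\pi(N)$-cr and Theorem~\ref{THM:MAIN} finishes the argument. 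For \ref{cliff-iii}, one direction follows from \cite[Corollary~3.16]{Bate2005} applied in $\pi(N)$, and the other from part \ref{cliff-i} applied to the normal inclusion $\pi(H) \trianglelefteq N_{\pi(N)}(\pi(H))$. Finally, \ref{cliff-iv} follows by combining \ref{cliff-iii} and \ref{cliff-i}, since $C_N(HC)$ is normal in $N_N(HC)$.

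The main obstacle is justifying the identity $\pi(N_N(HC)) = N_{\pi(N)}(\pi(H))$ together with the corresponding equality on connected components; this rests on the elementary observation that $C \le HC$, which guarantees that $C$ already lies in $N_N(HC)$ and hence that this normalizer is $\pi$-saturated. Once these compatibilities are in place, each part reduces directly to the corresponding absolute statement via Theorem~\ref{THM:MAIN}.
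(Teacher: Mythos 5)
Your proof is correct and follows the same overall strategy as the paper: translate to $\pi(N)$ via Theorem~\ref{THM:MAIN}, verify that $\pi$ carries $N_{N}(HC)$ onto $N_{\pi(N)}(\pi(H))$ and commutes with taking identity components, and invoke the corresponding absolute results from \cite{Bate2005}. Your treatment of part~\ref{cliff-ii} is in fact a bit cleaner than the paper's: the paper first uses Theorem~\ref{thm:bigp} to obtain reductivity of $\pi(H)$ and then establishes the equality $N_{\pi(N)}(\pi(H))^{\circ} = \pi(H)^{\circ}C_{\pi(N)}(\pi(H))^{\circ}$, whereas you avoid the reductivity step entirely by simply noting the containment $C_{\pi(N)}(\pi(H))^{\circ} \le N_{\pi(N)}(\pi(H))^{\circ}$, which already places $\pi(H')$ in the range needed for \cite[Theorem~3.14]{Bate2005}.
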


\begin{proof} Part \ref{cliff-i} follows directly from Theorem~\ref{THM:MAIN} and the absolute result \cite[Theorem 3.10]{Bate2005}, since $\pi(H')$ is normal in $\pi(H)$.

For part \ref{cliff-ii}, if $H$ is relatively $G$-cr with respect to $K$ then by Theorem~\ref{thm:bigp}, $\pi(H)$ is reductive. Therefore $N_{\pi(N)}(\pi(H))^{\circ} = \pi(H)^{\circ} C_{\pi(N)}(\pi(H))^{\circ}$. The group $\pi(N_{N}(HC)^{\circ})$ is a connected subgroup of $N_{\pi(N)}(\pi(H))$ containing $\pi(H)^{\circ}$. Moreover, if $\pi(x) \cdot \pi(h) = \pi(h)$ for some $h \in H$ then $x \cdot h = hc'$ for some $c' \in C$. Since $C$ is normal in $N$, it follows that $x \cdot (hc) \in HC$ for all $h \in H$ and all $c \in C$. Hence $N_{N}(HC)^{\circ} \ge \pi^{-1}(C_{\pi(N)}(\pi(H))^{\circ})$, and so $\pi(HCN_{N}(HC)^{\circ}) = \pi(H)C_{\pi(N)}(\pi(H))^{\circ}$. Since we also have $\pi(N_{N}(H)) \le N_{\pi(N)}(\pi(H))$, the hypotheses imply that $\pi(H)C_{\pi(N)}(\pi(H))^{\circ} \le \pi(H') \le N_{\pi(N)}(\pi(H))$. Applying the result from the absolute case \cite[Theorem 3.14]{Bate2005}, we conclude that $\pi(H')$ is $\pi(N)$-cr, and so $H'$ is relatively $G$-cr with respect to $K$.

For part \ref{cliff-iii}, if $H$ is relatively $G$-cr with respect to $K$ then by \ref{cliff-ii} so is $N_{N}(HC)$. Conversely if $N_{N}(HC)$ is relatively $G$-cr with respect to $K$ then by \ref{cliff-i} so is its normal subgroup $HC$. This is contained in precisely the same parabolic and Levi subgroups corresponding to elements of $Y(K)$ as $H$, and so $H$ is also relatively $G$-cr with respect to $K$. Part \ref{cliff-iv} now follows from parts \ref{cliff-i} and \ref{cliff-iii}, since $C_{N}(HC)$ is normal in $N_{N}(HC)$.
\end{proof}

\begin{remark}
	Theorem~\ref{thm:clifford} fails without the hypothesis that $H \le N$, even if we impose other natural conditions, for instance requiring that $H$ is connected and reductive \cite[Examples 5.6 and 5.7]{Bate2011}.
\end{remark}

The following example demonstrates the failure of Theorem~\ref{thm:bigp}(i) and Theorem~\ref{thm:clifford}(i) 
if $H$ does not normalize $K^\circ$, even when $H$ is connected.

\begin{example}
	Let $G=\GL_n(k)$, $K=\SO_n(k)$ and let $e_1,\ldots,e_n$ be the standard basis of $k^n$.
	Suppose that $\text{char}(k)\geq 3$ or $n\geq 3$.
	Let $H$ be the stabilizer in $G$ of the totally isotropic subspace $\langle e_1\rangle$. Since $H$ is relatively $G$-irreducible with respect to $K$, it is 
	relatively $G$-completely reducible with respect to $K$. But its normal subgroup $R_u(H)$ is not relatively $G$-completely reducible with respect to $K$, since $R_u(H)\le R_u(\text{Stab}_G(f))$, where $f$ is the flag $f= \langle e_1 \rangle\le \langle e_1\rangle^ \perp$, and $\text{Stab}_G(f)$ is a parabolic subgroup of $G$ which is given by a cocharacter of $K$.
	
	In addition, $R_u(H)$ is not contained in $N_G(K^\circ)$.
	Thus $H$ does not normalize $K^\circ$ and $R_u(H)$ does not centralize $K^\circ$.
	For let $h=\left( \begin{array}{rrrr}
	1 & 1 & \cdots & 1 \\
	0 & \ddots & 0 & \vdots \\
	\vdots & 0 & \ddots & 1 \\
	0 & \cdots & 0 & 1 \\
	\end{array}\right) \in R_u(H)$. 
	Since $\text{char}(k)\geq 3$ or $n\geq 3$, we see that $h t h^{-1}\not\in K$, for $1 \ne t \in \text{D}_n\cap K$, where $\text{D}_n$ is the subgroup of diagonal matrices in $G$.
\end{example}

The following directly generalises \cite[Theorem 1.3]{Bate2008}.
\begin{corollary}
With the notation of Theorem~\ref{THM:MAIN}, 
suppose that $\pi(N)$ is connected and that $p > 3$ or $p$ is good for $K$. Let $A$, $B$ be commuting connected subgroups of $N$ which are relatively $G$-completely reducible with respect to $K$. Then $AB$ is relatively $G$-completely reducible with respect to $K$.
\end{corollary}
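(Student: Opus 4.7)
The plan is to reduce to the absolute case inside $\pi(N)$ via Theorem~\ref{THM:MAIN} and then invoke the absolute result \cite[Theorem 1.3]{Bate2008} applied to the images $\pi(A)$ and $\pi(B)$ in $\pi(N)$.

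First, I would make the routine verifications that keep us in the hypotheses of Theorem~\ref{THM:MAIN}. Since $A$ and $B$ are subgroups of $N$ and commute with each other, $AB=BA$ is a subgroup of $N$; since both are connected, their product $AB$ is the image of the (irreducible) variety $A\times B$ under the multiplication morphism and hence is connected. Applying $\pi$, we get that $\pi(AB)=\pi(A)\pi(B)$, that $\pi(A)$ and $\pi(B)$ are commuting connected subgroups of $\pi(N)$, and (by hypothesis on $A$, $B$ together with Theorem~\ref{THM:MAIN}) that $\pi(A)$ and $\pi(B)$ are $\pi(N)$-completely reducible.

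Next, I would verify the characteristic hypothesis for the absolute result. The reductive group in which we want to apply \cite[Theorem 1.3]{Bate2008} is $\pi(N)$, which is connected by assumption. Its identity component $\pi(N)^{\circ}=\pi(K^{\circ})\cong K^{\circ}/Z(K^{\circ})$ has the same root system as $K^{\circ}$, so a prime is good for $\pi(N)$ if and only if it is good for $K$. Hence the hypothesis that $p>3$ or $p$ is good for $K$ translates directly into the hypothesis required for the absolute theorem in $\pi(N)$.

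With these pieces in place, \cite[Theorem 1.3]{Bate2008} gives that $\pi(A)\pi(B)=\pi(AB)$ is $\pi(N)$-completely reducible, and a final application of Theorem~\ref{THM:MAIN} (using $AB\le N$) then yields that $AB$ is relatively $G$-completely reducible with respect to $K$. I do not expect a serious obstacle: the only subtlety is the comparison of good primes for $K$ and for $\pi(N)$, which is immediate from the isogeny between $\pi(N)^{\circ}$ and $K^{\circ}/Z(K^{\circ})$.
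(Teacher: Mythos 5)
Your proposal is correct and takes essentially the same route as the paper, which likewise applies Theorem~\ref{THM:MAIN} to pass to $\pi(A)$, $\pi(B)$ in $\pi(N)$, invokes \cite[Theorem 1.3]{Bate2008} on the commuting connected product $\pi(A)\pi(B)=\pi(AB)$, and then translates back via Theorem~\ref{THM:MAIN}. The paper's proof is a one-line version of yours; the extra checks you make (connectedness of $AB$, good primes for $\pi(N)$ versus $K$) are exactly the implicit routine verifications behind that one line.
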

\begin{proof} This follows directly from the absolute result \cite[Theorem 1.3]{Bate2008} and Theorem~\ref{THM:MAIN}, since $\pi(AB)$ is the commuting product of the connected groups $\pi(A)$ and $\pi(B)$.
\end{proof}

\subsection{A geometric viewpoint} We end this section with a geometric consequence of Theorem~\ref{THM:GEOMETRIC}.

\begin{theorem}
\label{THM:geom}
In the notation of Theorem~\ref{THM:GEOMETRIC}, with $\mathbf{h} \in N^{n}$ and $\pi$ also denoting the quotient map $N^{n} \to (N/C)^{n}$, the following are equivalent.
\begin{enumerate}
	\item $K \cdot \mathbf{h}$ is closed in $G^{n}$. \label{geom-i}
	\item $\pi(K) \cdot \pi(\mathbf{h})$ is closed in $(N/C)^{n}$. \label{geom-ii}
	\item $\pi^{-1}(\pi(K \cdot \mathbf{h}))$ is closed in $G^{n}$. \label{geom-iii}
	\item Every orbit of $K$ on $\pi^{-1}(\pi(K \cdot \mathbf{h}))$ is closed in $G^{n}$. \label{geom-iv}
\end{enumerate}
\end{theorem}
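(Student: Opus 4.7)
The plan is to establish (i) $\Leftrightarrow$ (ii) $\Leftrightarrow$ (iii) directly, and then deduce the equivalence with (iv) by a pointwise application of the first equivalence. Theorem~\ref{THM:GEOMETRIC} will provide the bridge between the ambient $K$-action on $G^n$ and the $\pi(N)$-action on $(N/C)^n$, and the remaining work is purely topological.

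For (i) $\Leftrightarrow$ (ii), I would observe that $\pi(K)$ and $\pi(N)$ share the same identity component $\pi(K^\circ)$: the equality $\pi(N)^\circ = \pi(K^\circ)$ is noted in the paragraph preceding Theorem~\ref{THM:GEOMETRIC}, and $\pi(K)^\circ = \pi(K^\circ)$ because $\pi(K)/\pi(K^\circ)$ is a quotient of the finite group $K/K^\circ$. Since any orbit of an algebraic group is a finite disjoint union of translates of an orbit of its identity component, the three orbits $\pi(K) \cdot \pi(\mathbf{h})$, $\pi(K^\circ) \cdot \pi(\mathbf{h})$ and $\pi(N) \cdot \pi(\mathbf{h})$ are closed simultaneously. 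Combining this with Theorem~\ref{THM:GEOMETRIC} then yields (i) $\Leftrightarrow$ (ii).

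For (ii) $\Leftrightarrow$ (iii), the key point is that the coordinate-wise map $\pi \colon N^n \to (N/C)^n$ is submersive: a subset $S \subseteq (N/C)^n$ is closed if and only if $\pi^{-1}(S)$ is closed. Continuity gives one direction. For the other, $\pi$ is open, because for any open $U \subseteq N^n$ the preimage $\pi^{-1}(\pi(U)) = U \cdot C^n$ (under coordinate-wise right multiplication) is a union of translates of $U$ and hence open; together with surjectivity, openness gives submersivity. Applying this to $S = \pi(K) \cdot \pi(\mathbf{h}) = \pi(K \cdot \mathbf{h})$ produces (ii) $\Leftrightarrow$ (iii).

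Finally, (iv) $\Rightarrow$ (i) is immediate since $\mathbf{h}$ itself lies in $\pi^{-1}(\pi(K \cdot \mathbf{h}))$. Conversely, assuming (i), for any $\mathbf{h}' \in \pi^{-1}(\pi(K \cdot \mathbf{h}))$ we have $\pi(\mathbf{h}') \in \pi(K) \cdot \pi(\mathbf{h})$, so $\pi(K) \cdot \pi(\mathbf{h}') = \pi(K) \cdot \pi(\mathbf{h})$; using (i) $\Leftrightarrow$ (ii) first for $\mathbf{h}$ and then for $\mathbf{h}'$ transfers closedness of $K \cdot \mathbf{h}$ to closedness of $K \cdot \mathbf{h}'$, giving (iv). The only nontrivial technical point is the submersivity of $\pi$ used above, but this is a standard consequence of $\pi$ being a quotient by an algebraic group action, so no serious obstacle is anticipated.
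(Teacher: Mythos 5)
Your argument is correct, and the treatment of (i)~$\Leftrightarrow$~(ii) and (ii)~$\Leftrightarrow$~(iii) is essentially the same as the paper's (for (ii)~$\Leftrightarrow$~(iii) you should also note explicitly that $N^{n}$ is closed in $G^{n}$, so ``closed in $N^{n}$'' and ``closed in $G^{n}$'' agree for subsets of $N^{n}$; the rest of your quotient-map argument is fine). Where you genuinely diverge is the equivalence with (iv). You bootstrap through the already-proved (i)~$\Leftrightarrow$~(ii): given $\mathbf{h}' \in \pi^{-1}(\pi(K\cdot\mathbf{h}))$, the orbits $\pi(K)\cdot\pi(\mathbf{h}')$ and $\pi(K)\cdot\pi(\mathbf{h})$ coincide, and applying the equivalence once for $\mathbf{h}$ and once for $\mathbf{h}'$ transfers closedness. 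This is clean and shorter than the paper's proof. The paper instead decomposes $\pi^{-1}(\pi(K\cdot\mathbf{h}))$ explicitly as a union of $K^{\circ}$-orbits, each a $C^{n}$-translate of one of the closed orbits $K^{\circ}\cdot(x_i\cdot\mathbf{h})$, and then invokes the fact that translation is a variety automorphism. The pay-off of the paper's more hands-on route is recorded in Remark~\ref{rem:geometric}: the implications (ii)~$\Leftrightarrow$~(iii) and (i)~$\Leftrightarrow$~(iv) are seen there to be independent of Theorems~\ref{THM:MAIN} and \ref{THM:GEOMETRIC}, and moreover (iii)~$\Rightarrow$~(iv) can be established elementarily. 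Your argument, by contrast, routes (i)~$\Leftrightarrow$~(iv) through Theorem~\ref{THM:GEOMETRIC} (hence through Theorem~\ref{THM:MAIN}), so it gives up that independence; in this paper, where Theorem~\ref{THM:GEOMETRIC} is already available, that is a harmless trade-off, but it is worth being aware that the paper's approach isolates which implications are ``soft''.
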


\begin{proof} Theorem~\ref{THM:GEOMETRIC} tells us that \ref{geom-i} and \ref{geom-ii} are equivalent. Now $N^{n}$ is closed in $G^{n}$ and, since $\pi(K) \cdot \pi(\mathbf{h}) = \pi(K \cdot \mathbf{h})$ and the topology on $(N/C)^{n}$ is the quotient topology, it follows that \ref{geom-ii} and \ref{geom-iii} are equivalent. 

Let $K = \bigcup_{i=1}^t K^\circ x_i$ be the decomposition of $K$ into right cosets of $K^\circ$. Expressing $\mathbf{h}$ as $\mathbf{h} = (h_1,\ldots,h_n)$, we have
\begin{align*}
\pi^{-1}(\pi(K \cdot \mathbf{h})) &= \bigcup_{i=1}^t \pi^{-1}(\pi \left(K^\circ \cdot (x_i \cdot \mathbf{h}) \right)\\
&= \bigcup_{i=1}^t \left\{ \left(x \cdot (x_i \cdot h_1)c_1,\ldots, x \cdot (x_i \cdot h_n)c_n\right) \ssep x \in K^\circ,\ c_{i} \in C \right\},\\
&= \bigcup_{i=1}^t \left\{ \left(x \cdot ((x_i \cdot h_1)c_1),\ldots, x \cdot ((x_i \cdot h_n)c_n)\right) \ssep x \in K^\circ,\ c_{i} \in C \right\},
\end{align*}
where the last equality follows since
$K^\circ$ centralises $C$.

Now if $K \cdot \mathbf{h}$ is closed in $G^{n}$, then so is $K^\circ \cdot (x_i \cdot \mathbf{h})$ for each $i$.
Thus $\pi^{-1}(\pi(K\cdot \mathbf{h}))$ is a union of $K^\circ$-orbits, each of which is translated to one of the closed $K^\circ$-orbits 
$K^\circ \cdot (x_i \cdot \mathbf{h})$ by
an element of $C^{n}$.
As translation is a variety automorphism $G^{n} \to G^{n}$, it follows that every $K^\circ$-orbit in $\pi^{-1}(\pi(K \cdot \mathbf{h}))$ is closed in $G^{n}$.
Consequently, every $K$-orbit in $\pi^{-1}(\pi(K \cdot \mathbf{h}))$ is closed in $G^{n}$ as well.
So \ref{geom-iv} follows from \ref{geom-i} and the reverse implication is clear.
\end{proof}

\begin{remark}
\label{rem:geometric}
The proofs of the equivalences \ref{geom-ii} $\Leftrightarrow$ \ref{geom-iii} and \ref{geom-i} $\Leftrightarrow$ \ref{geom-iv} in Theorem~\ref{THM:geom} are easily seen to be independent of Theorems~\ref{THM:MAIN} and \ref{THM:GEOMETRIC}. From the argument above, we see that $\pi^{-1}(\pi(K \cdot \mathbf{h}))$ consists of $K^\circ $-orbits which are $C^{n}$-translates of a $K^\circ $-orbit 
$K^\circ \cdot (x_i \cdot \mathbf{h})$ for some $i$. 
The observation that one of the latter is closed if and only if all of them are closed, together with the fact that closed orbits always exist, we conclude that all $K^\circ$-orbits are closed in $\pi^{-1}(\pi(K \cdot \mathbf{h}))$ in the subspace topology and thus every $K$-orbit in $\pi^{-1}(\pi(K \cdot \mathbf{h}))$ is closed in $\pi^{-1}(\pi(K \cdot \mathbf{h}))$ in the subspace topology. Hence we also see that \ref{geom-iii} implies \ref{geom-iv}, even without appealing to Theorem~\ref{THM:GEOMETRIC} (or Theorem~\ref{THM:MAIN}). However the final implication is more subtle since it is not clear \emph{a priori} that in this setting an arbitrary union of $C^{n}$-translates of a closed $K$-orbit is again Zariski-closed in $G^{n}$.
\end{remark}

Corollary \ref{cor:irred} also has a geometric counterpart, as follows. If $\mathbf{h} \in G^{n}$ is a generic tuple for a subgroup $H \le G$ then $H$ is relatively $G$-irreducible with respect to $K$ precisely when $\mathbf{h}$ is a \emph{$K$-stable point} of $G^{n}$, i.e.\ $K \cdot \mathbf{h}$ is closed in $G^{n}$ and $C_{K}(\mathbf{h})/C_{K}(G)$ is finite \cite[Definition 3.12, Proposition 3.16]{Bate2011}.

\begin{corollary} \label{cor:irredgeom}
In the notation of Theorem~\ref{THM:GEOMETRIC}, suppose that $K^{\circ}$ is semisimple and that $\mathbf{h} \in N^{n}$. Then the following are equivalent.
\begin{enumerate}
	\item $\mathbf{h}$ is a $K$-stable point of $G^{n}$. \label{cor-irr-i}
	\item $\pi(\mathbf{h})$ is a $\pi(K)$-stable point of $(N/C)^{n}$. \label{cor-irr-ii}
	\item $\pi^{-1}(\pi(K \cdot \mathbf{h}))$ is closed in $G^{n}$ and contains a $K$-stable point. \label{cor-irr-iii}
	\item Every point of $\pi^{-1}(\pi(K \cdot \mathbf{h}))$ is $K$-stable in $G^{n}$. \label{cor-irr-iv}
\end{enumerate}
\end{corollary}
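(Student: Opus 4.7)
The plan is to mirror the proof of Theorem~\ref{THM:geom}, with orbit-closure strengthened to $K$-stability, substituting Corollary~\ref{cor:irred} for Theorem~\ref{THM:GEOMETRIC} wherever needed. The essential bridge between the geometric notion of $K$-stability and the group-theoretic notion of relative $G$-irreducibility is the generic-tuple characterisation of \cite[Proposition~3.16]{Bate2011}.

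For \ref{cor-irr-i} $\Leftrightarrow$ \ref{cor-irr-ii}, let $H$ be the subgroup of $G$ topologically generated by the entries of $\mathbf{h}$. Then $\mathbf{h}$ is a generic tuple for $H$ and $\pi(\mathbf{h})$ is a generic tuple for $\pi(H)$, by \cite[Remark~5.6]{Bate2013}. By \cite[Proposition~3.16]{Bate2011}, condition \ref{cor-irr-i} says exactly that $H$ is relatively $G$-irreducible with respect to $K$, and \ref{cor-irr-ii} says that $\pi(H)$ is relatively $(N/C)$-irreducible with respect to $\pi(K)$. Since $K^{\circ}$ is semisimple, $\pi(K)^{\circ} = \pi(K^{\circ}) = \pi(N)^{\circ}$, hence $Y(\pi(K)) = Y(\pi(N))$, so the latter condition is equivalent to $\pi(H)$ being $\pi(N)$-irreducible. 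Corollary~\ref{cor:irred} closes the loop.

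The implication \ref{cor-irr-i} $\Rightarrow$ \ref{cor-irr-iii} is immediate: $\pi^{-1}(\pi(K \cdot \mathbf{h}))$ is closed by Theorem~\ref{THM:geom}, and $\mathbf{h}$ itself is a $K$-stable point. For \ref{cor-irr-iii} $\Rightarrow$ \ref{cor-irr-i}, given a $K$-stable point $\mathbf{h}' \in \pi^{-1}(\pi(K \cdot \mathbf{h}))$, after translating $\mathbf{h}$ by an element of $K$ we may assume $\pi(\mathbf{h}') = \pi(\mathbf{h})$; then $\mathbf{h}' = (c_{1}h_{1}, \ldots, c_{n}h_{n})$ with $c_{i} \in C$. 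As each $c_{i}$ centralises $K^{\circ}$, componentwise left multiplication by $(c_{1}, \ldots, c_{n})$ is a $K^{\circ}$-equivariant variety automorphism of $G^{n}$; it sends $K^{\circ} \cdot \mathbf{h}$ onto $K^{\circ} \cdot \mathbf{h}'$ and satisfies $C_{K^{\circ}}(\mathbf{h}) = C_{K^{\circ}}(\mathbf{h}')$. Both constituents of $K$-stability therefore transfer from $\mathbf{h}'$ to $\mathbf{h}$.

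Finally, \ref{cor-irr-iv} $\Rightarrow$ \ref{cor-irr-i} is trivial since $\mathbf{h} \in \pi^{-1}(\pi(K \cdot \mathbf{h}))$. For \ref{cor-irr-i} $\Rightarrow$ \ref{cor-irr-iv}, any $\mathbf{h}' \in \pi^{-1}(\pi(K \cdot \mathbf{h}))$ has $\pi(\mathbf{h}')$ in the $\pi(K)$-orbit of the $\pi(K)$-stable point $\pi(\mathbf{h})$, hence $\pi(\mathbf{h}')$ is itself $\pi(K)$-stable, and applying \ref{cor-irr-ii} $\Rightarrow$ \ref{cor-irr-i} to $\mathbf{h}'$ gives that $\mathbf{h}'$ is $K$-stable. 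The main obstacle is the fibrewise argument in \ref{cor-irr-iii} $\Rightarrow$ \ref{cor-irr-i}, which depends crucially on the $K^{\circ}$-equivariance of left translation by elements of $C^{n}$; all other steps reduce quickly to Theorem~\ref{THM:geom} or Corollary~\ref{cor:irred}.
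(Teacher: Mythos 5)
Your proof is correct and reaches the conclusion, but it organises the non-trivial implications differently from the paper. Both proofs obtain \ref{cor-irr-i} $\Leftrightarrow$ \ref{cor-irr-ii} from Corollary~\ref{cor:irred} (combined with the generic-tuple characterisation of $K$-stability) and both invoke Theorem~\ref{THM:geom} for the closure conditions. The divergence lies in how the stabiliser (dimension) part of $K$-stability is transferred. The paper unifies \ref{cor-irr-iii} and \ref{cor-irr-iv} via a single observation: every $K^{\circ}$-orbit in $\pi^{-1}(\pi(K \cdot \mathbf{h}))$ is a $C^{n}$-translate of some $K^{\circ}x_{i} \cdot \mathbf{h}$, and $K/K^{\circ}$ permutes the orbits $K^{\circ}x_{i} \cdot \mathbf{h}$ transitively, so all $K$-orbits in the preimage have equal dimension; hence one point is $K$-stable if and only if all are. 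You instead handle \ref{cor-irr-iii} $\Rightarrow$ \ref{cor-irr-i} pointwise, by reducing (after a $K$-conjugation) to $\pi(\mathbf{h}') = \pi(\mathbf{h})$ and observing that left translation by $(c_{1},\ldots,c_{n}) \in C^{n}$ is a $K^{\circ}$-equivariant automorphism fixing stabilisers, and then you deduce \ref{cor-irr-i} $\Rightarrow$ \ref{cor-irr-iv} by a second application of the equivalence \ref{cor-irr-i} $\Leftrightarrow$ \ref{cor-irr-ii} to an arbitrary point of the fibre, since stability is a $\pi(K)$-orbit property. Your version is slightly more modular (it never needs to compare dimensions of distinct $K$-orbits directly and re-uses the main equivalence), while the paper's is a little more geometric and self-contained, establishing the ``all orbits have equal dimension'' fact once and for all. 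The substance—$C^{n}$-translation of $K^{\circ}$-orbits combined with the $K$-conjugation action—is the same in both.
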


\begin{proof} Corollary~\ref{cor:irred} gives the equivalence \ref{cor-irr-i} $\Leftrightarrow$ \ref{cor-irr-ii}. Theorem~\ref{THM:geom} tells us that the closure conditions in \ref{cor-irr-i}, \ref{cor-irr-iii} and \ref{cor-irr-iv} are equivalent. It therefore suffices to show that all $K$-orbits in $\pi^{-1}(\pi(K \cdot \mathbf{h}))$ have the same dimension, since then their stabilisers (in $K$) all have the same dimension, so $\pi^{-1}(\pi(K \cdot \mathbf{h}))$ contains a $K$-stable point if and only if all of its points are $K$-stable. As in the proof of Theorem~\ref{THM:geom}, the set $\pi^{-1}(\pi(K \cdot \mathbf{h}))$ is a union of $K^{\circ}$-orbits, each of which is a $C^{n}$-translate of a $K^{\circ}$-orbit $K^{\circ}x_{i} \cdot \mathbf{h}$, where $\{x_1,\ldots,x_t\}$ is a set of coset representatives for $K^{\circ}$ in $K$. But $K/K^{\circ}$ acts transitively by conjugation on these orbits since $K^{\circ}x_{i} \cdot \mathbf{h} = x_{i}K^{\circ} \cdot \mathbf{h} = x_{i} \cdot (K^{\circ} \cdot \mathbf{h})$, hence these orbits indeed have the same dimension.
\end{proof}

\section{Relative complete reducibility and separability} \label{sec:separabilty}

Recall from \cite[Definition 3.27]{Bate2005} that a closed subgroup $H$ of $G$ is called \emph{separable in $G$} if the Lie algebra centraliser $C_{\Lie(G)}(H)$ equals the Lie algebra of $C_{G}(H)$. As discussed in \cite[\S 3.5]{Bate2005}, this is equivalent to the smoothness of the scheme-theoretic centraliser of $H$ in $G$. Moreover if $H$ is topologically generated by $\{h_1,\ldots,h_n\}$ then $H$ is separable in $G$ precisely when the orbit map $G \to G \cdot (h_1,\ldots,h_n)$ is a separable morphism of varieties.

In \cite[\S 3.5]{Bate2005} and \cite{Bate2010} it is shown that separability interacts closely with complete reducibility. We now derive relative analogues of these results.

\begin{definition} \label{def:relKsep}
For subgroups $H$ and $K$ of a reductive group $G$, we say that $H$ is \emph{separable for $K$} if $\Lie(C_{K}(H))$ and $C_{\Lie(K)}(H)$ coincide as subspaces of $\Lie(G)$.
\end{definition}
This definition is equivalent to requiring that the orbit map $K \to K \cdot \mathbf{h}$ is a separable morphism for some (equivalently any) generic tuple $\mathbf{h}$ of $H$. This equivalence is proved in \cite[Lemma 5.1]{Bate2011a} under the assumption that $H \le K$ and $K$ is connected, but the same proof applies word-for-word in this more general setting. Note also that when $K = G$ this gives the usual definition of separability of a subgroup in $G$, cf.~\cite[Definition 3.27]{Bate2005}.

Recall from \cite[Theorem 3.5]{Bate2011} that a subgroup of $G$ is relatively $G$-cr with respect to $K$ if and only if for all $\lambda \in Y(K)$ such that $H \le P_{\lambda}$, we have $\dim C_{K}(H) = \dim C_{K}(c_{\lambda}(H))$, where $c_{\lambda}$ is the map $P_{\lambda} \to L_{\lambda}$, $x \mapsto \lim_{a \to 0}(\lambda(a) \cdot x)$. This observation allows us to prove the following analogue of \cite[Theorem 3.46]{Bate2005}.

\begin{theorem} \label{thm:sep_rel_gcr}
In the notation of Theorem~\ref{THM:MAIN}, suppose that $H$ is separable for $K$. If $\Lie(K)$ is semisimple as an $H$-module, then $H$ is relatively $G$-completely reducible with respect to $K$.
\end{theorem}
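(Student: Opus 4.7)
The plan is to verify the cocharacter-centraliser criterion \cite[Theorem 3.5]{Bate2011} recalled just before the theorem statement: it suffices to show that, for every $\lambda \in Y(K)$ with $H \le P_{\lambda}$, one has $\dim C_{K}(H) = \dim C_{K}(c_{\lambda}(H))$.

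I would establish this equality by chaining four (in)equalities sandwiched between $\dim C_{K}(H)$ and itself. First, since $c_{\lambda}(\mathbf{h}) = \lim_{a \to 0} \lambda(a) \cdot \mathbf{h}$ lies in the closure of $\lambda(k^{\ast}) \cdot \mathbf{h}$ for any generic tuple $\mathbf{h}$ of $H$, upper semicontinuity of stabiliser dimension for the $K$-action on $G^{n}$ yields $\dim C_{K}(H) \le \dim C_{K}(c_{\lambda}(H))$. Second, the scheme-theoretic centraliser sits inside the Lie-algebra centraliser, giving $\dim C_{K}(c_{\lambda}(H)) \le \dim C_{\Lie(K)}(c_{\lambda}(H))$. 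Third, the $H$-semisimplicity of $V \deq \Lie(K)$ will give $\dim V^{H} = \dim V^{c_{\lambda}(H)}$; this is the heart of the argument and is discussed below. Fourth, the separability hypothesis (Definition~\ref{def:relKsep}) is exactly $\dim C_{\Lie(K)}(H) = \dim C_{K}(H)$. Chaining the four relations forces all four quantities to coincide, whence the criterion \cite[Theorem 3.5]{Bate2011} completes the proof.

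The third step is where I expect the only real subtlety. I would prove it using the $\lambda$-weight decomposition $V = \bigoplus_{i \in \mathbb{Z}} V_{i}$ under $\Ad \circ \lambda$: because $H \le P_{\lambda}$, the $H$-action on $V$ preserves the descending filtration $F_{i} \deq \bigoplus_{j \ge i} V_{j}$, and the induced $H$-action on each graded piece $F_{i}/F_{i+1} \cong V_{i}$ coincides with the action of $c_{\lambda}(H)$. Thus the associated graded of $V$ as an $H$-module is $V$ itself carrying the $H$-action pulled back through $c_{\lambda} \colon H \to c_{\lambda}(H)$. The assumed $H$-semisimplicity of $V$ now splits the filtration, giving an $H$-equivariant vector-space automorphism of $V$ that intertwines the two $H$-module structures, and hence $\dim V^{H} = \dim V^{c_{\lambda}(H)}$ as required. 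The remainder of the argument is routine assembly and runs parallel to the absolute analogue \cite[Theorem~3.46]{Bate2005}, with $K$ playing the role of $G$ throughout.
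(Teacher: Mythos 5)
Your proposal is correct and follows the same overall architecture as the paper's proof: both are built around the criterion of \cite[Theorem 3.5]{Bate2011} and the two centraliser-dimension comparisons (separability on the $H$-side, the general inclusion of the scheme-theoretic into the Lie-algebra centraliser on the $c_{\lambda}(H)$-side), with the only substantive content being the equality $\dim C_{\Lie(K)}(H) = \dim C_{\Lie(K)}(c_{\lambda}(H))$.

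The one place you diverge is in establishing that equality. The paper argues by contradiction and invokes the translation of ``$\Lie(K)$ is $H$-semisimple'' into ``$\Ad(H)$ is $\GL(\Lie(K))$-completely reducible,'' concluding that $\Ad(H)$ is $\GL(\Lie(K))$-conjugate to $\Ad(c_{\lambda}(H)) = c_{\Ad \circ \lambda}(\Ad(H))$ by an element of $R_{u}(P_{\Ad \circ \lambda})$, so the two fixed-point spaces have equal dimension. You instead give the underlying module-theoretic argument directly: $\Ad(H)$ preserves the $\lambda$-filtration of $V = \Lie(K)$, the associated graded carries the $\Ad \circ c_{\lambda}$-action, and semisimplicity of $V$ splits the filtration $H$-equivariantly, producing an $H$-module isomorphism between the two actions on $V$. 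These are two renditions of the same fact -- your splitting map is precisely the conjugating element of $R_{u}(P_{\Ad \circ \lambda})$ that the paper produces from $\GL(V)$-complete reducibility -- so the two proofs are cognate; yours is marginally more self-contained (it does not reroute through the notion of $\GL(V)$-cr), while the paper's is shorter because it can lean on the known equivalence and on the existing proof of \cite[Theorem~3.46]{Bate2005}.
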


\begin{proof} We mimic the proof of \cite[Theorem 3.46]{Bate2005}. Suppose that $H$ is not relatively $G$-cr with respect to $K$. Thus by \cite[Theorem 3.5]{Bate2011} there exists $\lambda \in Y(K)$ such that $H \le P_{\lambda}$ and $\dim C_{K}(H) < \dim C_{K}(c_{\lambda}(H))$. Since $H$ is separable for $K$ by hypothesis, it follows that
\[ \dim C_{\Lie(K)}(H) = \dim C_{K}(H) < \dim C_{K}(c_{\lambda}(H)) \le \dim C_{\Lie(K)}(c_{\lambda}(H)).\]

Note that 
$c_\lambda(H)$ is in $N$:
for each $a$ in $k$, and $x$ in $N$, we have that $\lambda(a) x \lambda(a)^{-1}$ belongs to $N$. 
As $N$ is closed, the limit of the former as $a$ tends to $0$ still belongs to $N$. In particular, $c_\lambda(H)$ belongs to $N$.   

Now consider the images of $H$ and $c_{\lambda}(H)$ under the map $\Ad \colon N \to \GL(\Lie(K))$. Then it is clear that $\Ad(c_{\lambda}(H)) = c_{\Ad \circ \lambda}(H)$. Since $\Lie(K)$ is semisimple as an $H$-module, we deduce that $\Ad(H)$ is $\GL(\Lie(K))$-completely reducible, and in particular it is $\GL(\Lie(K))$-conjugate to $\Ad(c_{\lambda}(H))$. The fixed points of $\Ad(H)$ and $\Ad(c_{\lambda}(H))$ on $\Lie(K)$ are precisely $C_{\Lie(K)}(H)$ and $C_{\Lie(K)}(c_{\lambda}(H))$, and so these have equal dimensions. This contradicts the strict inequality above.
\end{proof}

\begin{remark}
One may be tempted to prove Theorem~\ref{thm:sep_rel_gcr} more directly by working with the image in $\pi(N)$ and applying Theorem~\ref{THM:MAIN} and the absolute result \cite[Theorem 3.46]{Bate2005}. However, even in the absolute case, separability of $H$ does not imply separability of $\pi(H)$ in $\pi(N)$. As example of this phenomenon, it is well-known that every subgroup of $\GL_{n}(k)$ is separable, whereas $\PGL_{n}(k)$ has non-separable subgroups, for instance the normaliser of a maximal torus in $\PGL_{2}(k)$ is not separable when $\Char(k) = 2$, cf.\ \cite[Examples 3.28--3.30]{Bate2005}.
\end{remark}

\begin{remark} \label{rem:torus}
In \cite[Theorem 1.2]{Bate2010} it is shown that if $G$ is connected and $\Char(k)$ is very good for $G$, then every subgroup of $G$ is separable in $G$. This does not generalise to non-connected $G$. Again, the normaliser of a maximal torus in $\PGL_{2}(k)$ provides a counterexample when $\Char(k) = 2$. This subgroup has the form $G = T \rtimes \left<x\right>$ where $x$ is an involution inverting every element of the $1$-dimensional torus $T$. Then $G$ is centreless but acts trivially on the $1$-dimensional Lie algebra $\Lie(G)$, in particular $G$ is not separable as a subgroup of itself, although $2$ is very good for $G$ as the root system is trivial (cf.\ \cite[Remark 3.5(iv)]{Bate2010}, which is missing the necessary condition $p = 2$).
\end{remark}

It turns out that the above is essentially the only obstruction, arising because $\Char(k)$ divides the order of the finite group $T \rtimes \left<x\right>/T$. The following generalises \cite[Theorem 1.2]{Bate2010} both to non-connected $G$ and to the relative setting.
\begin{theorem} \label{thm:vg_sep}
With the notation of Theorem~\ref{THM:MAIN}, suppose that $\Char(k)$ is zero or is very good for $K$ and coprime to $|\pi(H)/\pi(H \cap N^{\circ})|$. Then $H$ is separable for $K$. In particular, if $\Char(k)$ is zero or is very good for $K$ and coprime to one of $|\pi(H)/\pi(H^{\circ})|$ or $|\pi(N)/\pi(N^{\circ})|$, then $H$ is separable for $K$.
\end{theorem}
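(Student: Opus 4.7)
The plan is to reduce the statement to a non-connected generalisation of \cite[Theorem~1.2]{Bate2010} via a semidirect product construction.

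First, I would note that both sides of the defining equality $\Lie(C_K(H)) = C_{\Lie(K)}(H)$ depend only on the conjugation action of $H$ on $K^{\circ}$ (and $\Lie(K) = \Lie(K^{\circ})$), which factors through $\pi(H) \le \pi(N)$ since $C$ centralises $K^{\circ}$. Moreover $\Lie(C_K(H)) = \Lie(C_{K^{\circ}}(H))$ as $C_K(H)^{\circ} = C_{K^{\circ}}(H)^{\circ}$. Therefore it suffices to prove $\Lie(C_{K^{\circ}}(\pi(H))) = C_{\Lie(K^{\circ})}(\pi(H))$, where $\pi(H)$ acts on $K^{\circ}$ via the natural embedding $\pi(N) \hookrightarrow \Aut(K^{\circ})$.

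Second, I would form the (possibly disconnected) reductive algebraic group $G' := K^{\circ} \rtimes \pi(N)$ via this algebraic action. Its identity component $(G')^{\circ} = K^{\circ} \rtimes \pi(N)^{\circ}$ is reductive with the same root system (up to central isogeny) as $K^{\circ} \times K^{\circ}$, so $\Char(k)$ is very good for $(G')^{\circ}$ precisely when it is very good for $K^{\circ}$, i.e., for $K$. Setting $\tilde{H} := \{1\} \rtimes \pi(H) \le G'$, a direct calculation yields $C_{G'}(\tilde{H}) = C_{K^{\circ}}(\pi(H)) \rtimes C_{\pi(N)}(\pi(H))$ and $\Lie(G') = \Lie(K^{\circ}) \oplus \Lie(\pi(N)^{\circ})$ as $\tilde{H}$-modules, where the $\tilde{H}$-action on $\Lie(K^{\circ})$ is the adjoint action of $\pi(H)$. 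Hence the equality $\Lie(C_{G'}(\tilde{H})) = C_{\Lie(G')}(\tilde{H})$ decomposes as a direct sum whose $\Lie(K^{\circ})$-summand is precisely the desired equality.

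Third, I would invoke the following non-connected generalisation of \cite[Theorem~1.2]{Bate2010}: \emph{if $G$ is a (possibly disconnected) reductive algebraic group and $H \le G$ with $\Char(k)$ zero or very good for $G^{\circ}$ and coprime to $|H/(H \cap G^{\circ})|$, then $H$ is separable in $G$.} This extension is expected in view of Remark~\ref{rem:torus} and follows by combining the connected case for $G^{\circ}$ with a Maschke-type averaging over the component group, whose order is invertible in $k$ by hypothesis. In our setting $|\tilde{H}/(\tilde{H} \cap (G')^{\circ})| = |\pi(H)/(\pi(H) \cap \pi(N^{\circ}))|$ divides $|\pi(H)/\pi(H \cap N^{\circ})|$ (since $\pi(H \cap N^{\circ}) \le \pi(H) \cap \pi(N^{\circ})$), hence is coprime to $\Char(k)$ by hypothesis, completing the main assertion. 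For the ``in particular'' part, $H^{\circ}$ is connected and contained in $N$ so lies in $N^{\circ}$, giving $\pi(H^{\circ}) \le \pi(H \cap N^{\circ})$ and hence $|\pi(H)/\pi(H \cap N^{\circ})|$ divides $|\pi(H)/\pi(H^{\circ})|$; similarly $|\pi(H)/(\pi(H) \cap \pi(N^{\circ}))|$ divides $|\pi(N)/\pi(N^{\circ})|$ via the induced injection into $\pi(N)/\pi(N^{\circ})$, so either strengthened hypothesis ensures coprimality with the component group of $\tilde{H}$ in $G'$, as needed.

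The main obstacle is the non-connected generalisation of \cite[Theorem~1.2]{Bate2010}: although expected in view of Remark~\ref{rem:torus}, a careful Maschke-type averaging is required to promote separability from $H \cap G^{\circ}$ (where the connected result of Bate-Martin-R\"ohrle applies directly) to the full group $H$, using invertibility of $|H/(H \cap G^{\circ})|$ in $k$.
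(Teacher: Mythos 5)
Your proposal is correct, and the essential ingredients match the paper's: reduce to a statement about conjugation by $\pi(H)$ on $K^{\circ}$, apply the connected case of \cite[Theorem~1.2]{Bate2010} to the identity component, and handle the finite component group via linear reductivity. The reduction step is where you take a genuinely different route. The paper stays inside $G$: using the subgroup $M$ from Lemma~\ref{lem:usefulsub} it replaces $H$ by $HC \cap M$ and $G$ by the reductive group $N = MZ(K^{\circ})^{\circ}$, arriving at a situation where $G^{\circ} = K$ and separability for $K$ coincides with separability in $G$. You instead leave $G$ behind entirely and build the external semidirect product $G' = K^{\circ} \rtimes \pi(N)$ with $\tilde{H} = \{1\} \rtimes \pi(H)$, then project the separability equation for $\tilde{H}$ in $G'$ onto its $\Lie(K^{\circ})$-summand; this buys you independence from $G$ and from Lemma~\ref{lem:usefulsub}, at the mild cost that $(G')^{\circ}$ is a larger group (isogenous to $K^{\circ}\times K^{\circ}/Z(K^{\circ})$, though its simple factors are still those of $K^{\circ}$, so the very-good condition is unchanged). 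The step you flag as the main obstacle, the non-connected extension of \cite[Theorem~1.2]{Bate2010}, is handled in the paper by the same idea you sketch, made precise via Richardson's smoothness-of-fixed-points result \cite[Lemma~4.1]{Richardson1982}: the scheme-theoretic centraliser of $H\cap G^{\circ}$ is smooth by the connected case, the linearly reductive finite quotient acts on it, and its fixed-point subscheme is again smooth. You also correctly observe that the relevant component group in your construction is $\pi(H)/(\pi(H)\cap\pi(N^{\circ}))$, which divides $|\pi(H)/\pi(H\cap N^{\circ})|$ and embeds into $\pi(N)/\pi(N^{\circ})$, so both forms of the ``in particular'' hypothesis land cleanly; the paper instead carries the finite group $\pi(H)/\pi(H\cap N^{\circ})$ throughout.
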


\begin{proof} For brevity, write $H' = H \cap N^{\circ}$. Note firstly that $H/H' \cong HN^{\circ}/N^{\circ} \le N/N^{\circ}$, and since $H^{\circ} \le H'$ we have a surjection $H/H^{\circ} \to H/H'$. Applying $\pi$, if $\Char(k)$ is positive and coprime to one of $|\pi(H)/\pi(H^{\circ})|$ or $|\pi(N)/\pi(N^{\circ})|$, then it is coprime to $|\pi(H)/\pi(H')|$. Thus it suffices to prove the first statement of the theorem.

Now, $C_{K}(H)^{\circ}$ is the largest connected subgroup of $K$ centralising $H$, hence is the largest connected subgroup of $K^{\circ}$ centralising $H$, thus $\dim C_{K}(H) = \dim C_{K^{\circ}}(H)$. Since also $\Lie(K) = \Lie(K^{\circ})$ it follows that we can assume $K = K^{\circ}$. Next let $M$ be a reductive subgroup of $N$ guaranteed by Lemma~\ref{lem:usefulsub}. It is clear that $\dim C_{K}(H) = \dim C_{K}(HC)$ and $C_{\Lie(K)}(H) = C_{\Lie(K)}(HC)$, and since $HC = (HC \cap M)C$ it suffices to prove the result assuming that $H \le M$ (this does not change $\pi(H)$ or $\pi(H \cap N^{\circ})$). We can therefore also assume $G = N = MZ(K)^{\circ}$, since this does not change $K$, $C_{K}(H)$, $C_{\Lie(K)}(H)$, $\pi(H)$ or $\pi(H \cap N^{\circ})$. In this case $H$ is separable for $K$ precisely when $H$ is separable in $G$.

We now have $G^{\circ} = N^{\circ} = K$, hence $\Char(k)$ is very good for $G$ by hypothesis. Now $H' = H \cap N^{\circ} = H \cap G^{\circ}$, in particular $H' \le G^{\circ}$ and by \cite[Theorem 1.2]{Bate2010} the subgroup $H'$ is separable in $G^{\circ}$. The finite group $H/H'$ acts on $C_{K}(H')$ and on $C_{\Lie(K)}(H')$, and $H$ is separable in $G$ (hence separable for $K$) precisely when the fixed points of $H/H'$ on $\Lie C_{K}(H') = C_{\Lie(K)}(H')$ are equal to the Lie algebra of $C_{C_{K}(H')}(H/H')$. But the action of $H$ on $K$ factors through $\pi(H)$, and so the action of $H/H'$ factors through $\pi(H)/\pi(H')$. By hypothesis, $\Char(k)$ is either zero or coprime to the order of this latter group, hence $\pi(H)/\pi(H')$ is linearly reductive and the desired result follows from \cite[Lemma 4.1]{Richardson1982}.
\end{proof}

Combining Theorem~\ref{thm:vg_sep} and Theorem~\ref{thm:sep_rel_gcr} gives the following, which in turn implies Theorem~\ref{THM:separable}.
\begin{corollary} \label{cor:vg_sep}
With the notation of Theorem~\ref{THM:MAIN}, suppose that $\Char(k)$ is zero or is very good for $K$ and coprime to $|\pi(H)/\pi(H \cap N^{\circ})|$. If $\Lie(K)$ is semisimple as an $H$-module, then $H$ is relatively $G$-completely reducible with respect to $K$.
\end{corollary}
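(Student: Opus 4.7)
The plan is essentially to string together the two results proved immediately before the corollary, with no additional machinery required. Specifically, under the given characteristic hypothesis, Theorem~\ref{thm:vg_sep} delivers exactly the input (separability of $H$ for $K$) needed for Theorem~\ref{thm:sep_rel_gcr}, whose other hypothesis (semisimplicity of $\Lie(K)$ as an $H$-module) is given.

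First I would verify that the hypotheses of Theorem~\ref{thm:vg_sep} are met: $K$ is reductive and by assumption $\Char(k)$ is zero or very good for $K$ and coprime to $|\pi(H)/\pi(H \cap N^{\circ})|$. Theorem~\ref{thm:vg_sep} then yields that $H$ is separable for $K$ in the sense of Definition~\ref{def:relKsep}, i.e.\ that $C_{\Lie(K)}(H) = \Lie(C_{K}(H))$ inside $\Lie(G)$.

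Next I would apply Theorem~\ref{thm:sep_rel_gcr}: the same setup $H \le N = N_{G}(K^{\circ})$ is in force, separability of $H$ for $K$ has just been established, and the semisimplicity of $\Lie(K)$ as an $H$-module is exactly the remaining hypothesis supplied by the corollary. Theorem~\ref{thm:sep_rel_gcr} then concludes that $H$ is relatively $G$-completely reducible with respect to $K$, which is the desired statement.

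There is no genuine obstacle; the only subtle point worth flagging is that the hypothesis is formulated so that Theorem~\ref{thm:vg_sep} applies with its sharpest form (using $|\pi(H)/\pi(H \cap N^{\circ})|$ rather than the weaker divisors $|\pi(H)/\pi(H^{\circ})|$ or $|\pi(N)/\pi(N^{\circ})|$ mentioned in the second sentence of that theorem). To deduce Theorem~\ref{THM:separable} from the corollary one simply observes, as in the proof of Theorem~\ref{thm:vg_sep}, that $|\pi(H)/\pi(H \cap N^{\circ})|$ divides $|\pi(N)/\pi(N^{\circ})|$, so the coprimality assumption of Theorem~\ref{THM:separable} implies the corresponding hypothesis here.
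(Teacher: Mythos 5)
Your proposal is correct and matches the paper's own argument exactly: the corollary is stated as the combination of Theorem~\ref{thm:vg_sep} (which gives separability of $H$ for $K$ under the stated characteristic hypothesis) and Theorem~\ref{thm:sep_rel_gcr} (which converts that separability plus semisimplicity of $\Lie(K)$ as an $H$-module into relative $G$-complete reducibility). Your remark about how the corollary implies Theorem~\ref{THM:separable} is also the intended reading.
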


In \cite[\S 3.5]{Bate2005} and \cite{Bate2010} it is shown that separability interacts closely with Richardson's notion of reductive pairs. We now generalise these results to the relative setting.

\begin{definition} \label{def:reductivepair}
Let $G$ be reductive, let $H,K \le G$ and suppose that $H$ normalises $K^{\circ}$. We say that $(G,K)$ is a \emph{reductive pair for $H$} if $\Lie(K)$ is an $H$-module direct summand of $\Lie(G)$.
\end{definition}
This generalises the usual notion of a reductive pair \cite[Definition 3.32]{Bate2005}, which is the special case $H = K$.

The following result generalises \cite[Corollary 5.3]{Bate2011a}, which is a corollary of \cite[Lemma 5.2]{Bate2011a}. The proof of this latter result does not use the assumption `$H \le K$', and therefore goes through word-for-word in this situation.
\begin{lemma}[cf.\ {\cite[Corollary 5.3]{Bate2011a}}] \label{lem:sep_red_sep}
Let $K \le G$ be reductive algebraic groups and let $H$ be a subgroup of $G$. If $H$ is separable for $G$ and $(G,K)$ is a reductive pair for $H$ then $H$ is separable for $K$.
\end{lemma}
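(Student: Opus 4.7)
The plan is to mirror the argument of \cite[Lemma 5.2 and Corollary 5.3]{Bate2011a}, noting that the assumption $H \le K$ used there plays no role; only the fact that $H$ normalises $K^{\circ}$ (built into the definition of a reductive pair for $H$) is needed. The strategy is to recast separability as surjectivity of the differential of an orbit map at the identity, and then exploit the $H$-module direct sum decomposition $\Lie(G) = \Lie(K) \oplus V$ coming from the reductive pair hypothesis.

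First I would fix a generic tuple $\mathbf{h} = (h_{1}, \ldots, h_{n}) \in H^{n}$ for $H$ and reduce to the case $K = K^{\circ}$. This is legitimate since $\Lie(K) = \Lie(K^{\circ})$ and $C_{K}(H)^{\circ} = C_{K^{\circ}}(H)^{\circ}$, so separability for $K$ coincides with separability for $K^{\circ}$, and $(G, K)$ is a reductive pair for $H$ if and only if $(G, K^{\circ})$ is. This reduction is crucial, as it upgrades $H \le N_{G}(K^{\circ})$ to $H \le N_{G}(K)$. Recall from \cite[Lemma 5.1]{Bate2011a} (whose proof does not use $H \le K$, as noted after Definition~\ref{def:relKsep}) that $H$ is separable for $K$ if and only if $d(\phi_{K})_{e} \colon \Lie(K) \to T_{\mathbf{h}}(K \cdot \mathbf{h})$ is surjective, where $\phi_{K}(k) = k \cdot \mathbf{h}$; similarly, the hypothesis that $H$ is separable for $G$ gives surjectivity of $d(\phi_{G})_{e} \colon \Lie(G) \to T_{\mathbf{h}}(G \cdot \mathbf{h})$.

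Next I would trivialise $T_{\mathbf{h}}(G^{n}) \cong \Lie(G)^{n}$ by left translation in each coordinate, so that $d(\phi_{G})_{e}(x) = (\Ad(h_{i}^{-1})x - x)_{i=1}^{n}$ for $x \in \Lie(G)$. Since $H$ normalises $K$, the operators $\Ad(h_{i}^{-1})$ preserve $\Lie(K)$, and by the reductive pair hypothesis they preserve its $H$-module complement $V$ as well; hence $d(\phi_{G})_{e}(\Lie(K)) \subseteq \Lie(K)^{n}$ and $d(\phi_{G})_{e}(V) \subseteq V^{n}$. Combining this with the surjectivity of $d(\phi_{G})_{e}$ yields
\[ T_{\mathbf{h}}(G \cdot \mathbf{h}) = d(\phi_{G})_{e}(\Lie(K)) \oplus d(\phi_{G})_{e}(V) \subseteq \Lie(K)^{n} \oplus V^{n}. \]
The crux is the geometric observation that $K \cdot \mathbf{h}$ lies in the left translate $h_{1}K \times \cdots \times h_{n}K$: for $k \in K$, the conjugate $kh_{i}k^{-1} = h_{i}(h_{i}^{-1}kh_{i})k^{-1}$ belongs to $h_{i}K$ since $h_{i}^{-1}Kh_{i} = K$. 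In the left-translation identification this gives $T_{\mathbf{h}}(K \cdot \mathbf{h}) \subseteq \Lie(K)^{n}$. Intersecting the displayed direct sum with $\Lie(K)^{n}$ then yields
\[ T_{\mathbf{h}}(K \cdot \mathbf{h}) \subseteq T_{\mathbf{h}}(G \cdot \mathbf{h}) \cap \Lie(K)^{n} = d(\phi_{G})_{e}(\Lie(K)) = d(\phi_{K})_{e}(\Lie(K)), \]
and the reverse inclusion is automatic, so $d(\phi_{K})_{e}$ is surjective and $H$ is separable for $K$.

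The most delicate step is establishing that $T_{\mathbf{h}}(K \cdot \mathbf{h})$ lies in $\Lie(K)^{n}$ under the chosen trivialisation. This is not automatic from the orbit map alone, since in inseparable situations the image of $d(\phi_{K})_{e}$ can be strictly smaller than $T_{\mathbf{h}}(K \cdot \mathbf{h})$; it is forced here by the fact that $K \cdot \mathbf{h}$ is contained in a smooth translate of $K^{n}$, which in turn requires $H$ to normalise $K$ rather than merely $K^{\circ}$, explaining the role of the preliminary reduction.
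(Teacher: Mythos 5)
Your proof is correct and takes essentially the same approach as the paper, which simply defers to the argument of \cite[Lemma 5.2 and Corollary 5.3]{Bate2011a} (noting that the hypothesis $H \le K$ is never used there). You have reconstructed that argument explicitly — separability as surjectivity of the differential of the orbit map, the left-translation trivialisation, the $H$-stable splitting $\Lie(G) = \Lie(K) \oplus V$, and the containment $K \cdot \mathbf{h} \subseteq h_{1}K \times \cdots \times h_{n}K$ — and correctly added the routine reduction to $K = K^{\circ}$ needed to handle disconnected $K$, which the paper leaves implicit.
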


The following corollary of Lemma~\ref{lem:sep_red_sep} generalises \cite[Corollary 5.4]{Bate2011a} to the case that $G$ is not necessarily connected.
\begin{corollary} \label{cor:sep_red_sep}
Take $G$, $K$, $H$ as in Lemma~\ref{lem:sep_red_sep} and let $\psi \colon G \to \Aut(G^{\circ})$ denote the map induced by conjugation. If $(G,K)$ is a reductive pair for $H$ and $\Char(k)$ is zero, or is very good for $G$ and coprime to $|\psi(H)/\psi(H \cap G^{\circ})|$, then $H$ is separable for $K$.
\end{corollary}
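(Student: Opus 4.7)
The plan is to reduce the statement to Lemma~\ref{lem:sep_red_sep} by first establishing, via Theorem~\ref{thm:vg_sep}, that $H$ is separable in $G$ in the usual (absolute) sense, i.e.\ separable for $G$ in the language of Definition~\ref{def:relKsep}. Since $(G,K)$ is assumed to be a reductive pair for $H$, Lemma~\ref{lem:sep_red_sep} will then immediately give the desired conclusion that $H$ is separable for $K$.

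To carry out the first step, I would apply Theorem~\ref{thm:vg_sep} with the role of the reductive subgroup $K$ there played by $G$ itself. In this specialisation one has $N_G(G^\circ) = G$ and $C_G(G^\circ) = \ker(\psi)$, so the quotient map $\pi \colon G \to G/C_G(G^\circ)$ of Theorem~\ref{THM:MAIN}'s setup coincides (up to the canonical injection $G/C_G(G^\circ) \hookrightarrow \Aut(G^\circ)$) with $\psi$. Under this identification,
\[ |\pi(H)/\pi(H \cap N^\circ)| = |\psi(H)/\psi(H \cap G^\circ)|, \]
and the arithmetic hypothesis of our corollary (that $\Char(k)$ is zero or very good for $G$ and coprime to $|\psi(H)/\psi(H \cap G^\circ)|$) matches precisely the hypothesis of Theorem~\ref{thm:vg_sep}. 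That theorem then yields that $H$ is separable for $G$.

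Second, I would invoke Lemma~\ref{lem:sep_red_sep}: since $(G,K)$ is a reductive pair for $H$ by assumption and $H$ is separable in $G$ by the preceding step, the lemma concludes that $H$ is separable for $K$.

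Neither step presents a real obstacle; the only bookkeeping worth spelling out is the identification of the quotient map $\pi$ (with $K$ replaced by $G$) with $\psi$, which is immediate from $\ker(\psi) = C_G(G^\circ)$ together with the first isomorphism theorem. This is also where one checks that the group $|\psi(H)/\psi(H \cap G^\circ)|$ in our hypothesis is exactly what Theorem~\ref{thm:vg_sep} demands in the specialisation.
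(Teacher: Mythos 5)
Your proof is correct and matches the paper's argument exactly: the paper likewise applies Theorem~\ref{thm:vg_sep} with $K=G$ to obtain separability of $H$ for $G$, then invokes Lemma~\ref{lem:sep_red_sep}. Your explicit identification of $\pi$ with $\psi$ via $\ker(\psi)=C_G(G^\circ)$ is the right bookkeeping, though the paper leaves it implicit.
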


\begin{proof}
Apply Theorem~\ref{thm:vg_sep} (taking $K = G$ there) to conclude that $H$ is separable for $G$, and then apply Lemma \ref{lem:sep_red_sep}.
\end{proof}

\begin{proof}[Proof of Theorem~\ref{thm:GLV}] \label{prf:GLV}
It is well-known that every subgroup of $\GL(V)$ is separable in $\GL(V)$, hence Lemma~\ref{lem:sep_red_sep} implies that $H$ is separable for $K$. Again, since $\Lie(K)$ is a semisimple $H$-module, the desired conclusion follows from Theorem~\ref{thm:sep_rel_gcr}.
\end{proof}

Combining Corollary~\ref{cor:sep_red_sep} and Theorem~\ref{thm:sep_rel_gcr} gives the following, which in turn implies Theorem~\ref{thm:G-connected-red-pair}.
\begin{corollary} \label{cor:sep_red_sep_gcr}
Let $K \le G$ be reductive algebraic groups, let $H$ be a subgroup of $G$ and let $\psi \colon G \to \Aut(G^{\circ})$ denote the map induced by conjugation. Suppose that $(G,K)$ is a reductive pair for $H$ and $\Char(k)$ is either zero, or is very good for $G$ and coprime to $|\psi(H)/\psi(H \cap G^{\circ})|$. If $\Lie(K)$ is semisimple as an $H$-module then $H$ is relatively $G$-completely reducible with respect to $K$.
\end{corollary}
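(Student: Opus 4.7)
The plan is to combine Corollary~\ref{cor:sep_red_sep} with Theorem~\ref{thm:sep_rel_gcr} in direct analogy with how Corollary~\ref{cor:vg_sep} combined Theorem~\ref{thm:vg_sep} with Theorem~\ref{thm:sep_rel_gcr}. The argument is essentially formal once the hypotheses are lined up.

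First I would note that the assumption ``$(G,K)$ is a reductive pair for $H$'' carries with it, by Definition~\ref{def:reductivepair}, the condition that $H$ normalises $K^\circ$, so we are squarely in the setting of Theorem~\ref{THM:MAIN}. Then I would apply Corollary~\ref{cor:sep_red_sep} with the given hypotheses on $\Char(k)$ and on the reductive pair $(G,K)$: this yields that $H$ is separable for $K$ in the sense of Definition~\ref{def:relKsep}, i.e.\ $\Lie(C_K(H)) = C_{\Lie(K)}(H)$ as subspaces of $\Lie(G)$.

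Having $H$ separable for $K$, and with the standing hypothesis that $\Lie(K)$ is semisimple as an $H$-module, the desired conclusion follows immediately from Theorem~\ref{thm:sep_rel_gcr}: $H$ is relatively $G$-completely reducible with respect to $K$. There is no real obstacle at this final step; the substantive work has already been done upstream, in Theorem~\ref{thm:vg_sep} (which handles non-connectedness via the finite group $\pi(H)/\pi(H\cap N^\circ)$), in Lemma~\ref{lem:sep_red_sep} (which uses that $\Lie(K)$ is an $H$-module summand of $\Lie(G)$ to transfer separability from $G$ to $K$), and in the proof of Theorem~\ref{thm:sep_rel_gcr} itself. One should finally observe that Theorem~\ref{thm:G-connected-red-pair} is an immediate specialisation: when $G$ is connected, $H = H \cap G^\circ$, so $|\psi(H)/\psi(H \cap G^\circ)| = 1$ and the coprimality condition becomes vacuous, leaving only the ``very good for $G$'' hypothesis.
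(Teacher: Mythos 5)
Your proposal matches the paper's argument exactly: the paper derives this corollary by combining Corollary~\ref{cor:sep_red_sep} (which gives separability of $H$ for $K$) with Theorem~\ref{thm:sep_rel_gcr}, just as you do. Your observations that Definition~\ref{def:reductivepair} already encodes $H \le N_G(K^\circ)$ and that Theorem~\ref{thm:G-connected-red-pair} is the connected specialisation are both correct and consistent with the paper's remarks.
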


\begin{remark}
Theorem~\ref{thm:vg_sep}, Corollary~\ref{cor:sep_red_sep} and Corollary~\ref{cor:sep_red_sep_gcr} all hold with $|H/(H \cap N^{\circ})|$ in place of $|\pi(H)/\pi(H \cap N^{\circ})|$ and other similar adjustments. While this makes for slightly cleaner statements, it also misses some pathological cases, such as letting $K$ be a connected reductive group and $G = H = K \times S$ for a finite $p$-group $S$, where $p = \Char(k) > 0$. In this case, the question of whether $H$ is separable for $K$ does not depend on properties of $S$, although $H/(H \cap K) \cong S$ so $\Char(k)$ divides $|H/(H \cap G^{\circ})|$ if $|S| > 0$.
\end{remark}

\section{Relative complete reducibility of Lie subalgebras} \label{sec:liealg}

In this section we consider the analogue of Theorem~\ref{THM:MAIN} for subalgebras of $\Lie(G)$. We write $\mathfrak{g} = \Lie(G)$, and we denote the Lie algebras of the subgroups $K$, $N$, $C$, $P_{\lambda}$, $L_{\lambda}$ of $G$ by $\mathfrak{k}$, $\mathfrak{n}$, $\mathfrak{c}$, $\mathfrak{p}_{\lambda}$ and $\mathfrak{l}_{\lambda}$, respectively.

Recall from \cite[Definition 3.9]{Bate2011} that a subalgebra $\mathfrak{h}$ of $\mathfrak{g}$ is called relatively $G$-completely reducible with respect to a reductive subgroup $K \le G$ if, whenever $\mathfrak{h} \subseteq \mathfrak{p}_{\lambda}$ for $\lambda \in Y(K)$, there exists $\mu \in Y(K)$ such that $P_{\lambda} = P_{\mu}$ and $\mathfrak{h} \subseteq \mathfrak{l}_{\mu}$. If this holds for $K = G$ then $\mathfrak{h}$ is called $G$-completely reducible, cf.~\cite{McNinch2007}.

\begin{lemma} \label{lem:liealg-parabolic}
In the notation of Theorem~\ref{THM:MAIN}, for all $\lambda \in Y(K)$ the following hold.
\begin{enumerate}
	\item $\pi(P_{\lambda}^{\circ} \cap N) = P_{\pi \circ \lambda}^{\circ}$, $\pi(L_{\lambda}^{\circ} \cap N) = L_{\pi \circ \lambda}^{\circ}$. \label{lie-parab-i}
	\item $\pi^{-1}(P_{\pi \circ \lambda}^{\circ}) = P_{\lambda}^{\circ} \cap N$, $\pi^{-1}(L_{\pi \circ \lambda}^{\circ}) = L_{\lambda}^{\circ} \cap N$. \label{lie-parab-ii}
	\item $\Lie(P_{\lambda} \cap N) = \mathfrak{p}_{\lambda} \cap \mathfrak{n}$, $\Lie(L_{\lambda} \cap N)  = \mathfrak{l}_{\lambda} \cap \mathfrak{n}$. \label{lie-parab-iii}
	\item $d\pi(\mathfrak{p}_{\lambda} \cap \mathfrak{n}) = \mathfrak{p}_{\pi \circ \lambda}$, $d\pi(\mathfrak{l}_{\lambda} \cap \mathfrak{n}) = \mathfrak{l}_{\pi \circ \lambda}$. \label{lie-parab-iv}
	\item $(d\pi)^{-1}(\mathfrak{p}_{\pi \circ \lambda}) = \mathfrak{p}_{\lambda} \cap \mathfrak{n}$, $(d\pi)^{-1}(\mathfrak{l}_{\pi \circ \lambda}) = \mathfrak{l}_{\lambda} \cap \mathfrak{n}$. \label{lie-parab-v}
\end{enumerate}
\end{lemma}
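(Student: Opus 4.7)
The plan is to establish the Lie algebra statements (iii)--(v) via weight-space analysis of the $\lambda$-action on $\mathfrak{n}$, and then derive the group-level statements (i)--(ii) as analogues.

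For (iii), viewing $\lambda \in Y(K) \subseteq Y(N)$ as a cocharacter of $N$, the intersections $P_{\lambda} \cap N$ and $L_{\lambda} \cap N$ are precisely the R-parabolic and R-Levi subgroup of $N$ with respect to $\lambda$. Their Lie algebras are respectively the non-negative and zero weight spaces of $\Ad \circ \lambda$ on $\mathfrak{n}$ (the Levi case using smoothness of centralisers of tori). Since the weight decomposition of $\mathfrak{g}$ under $\lambda$ restricts compatibly to the $\Ad(N)$-stable subspace $\mathfrak{n}$, these weight spaces coincide with $\mathfrak{p}_{\lambda} \cap \mathfrak{n}$ and $\mathfrak{l}_{\lambda} \cap \mathfrak{n}$.

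For (iv) and (v), the differential $d\pi \colon \mathfrak{n} \to \Lie(\pi(N))$ is surjective with kernel $\mathfrak{c}$, and is $k^{\ast}$-equivariant with respect to $\lambda$ on $\mathfrak{n}$ and $\pi \circ \lambda$ on $\Lie(\pi(N))$. Because $C$ centralises $K^{\circ} \supseteq \lambda(k^{\ast})$, the kernel $\mathfrak{c}$ lies in the zero weight space on $\mathfrak{n}$. The weight decomposition therefore descends cleanly to $\mathfrak{n}/\mathfrak{c} \cong \Lie(\pi(N))$; combined with (iii) this yields (iv). Part (v) follows by taking preimages under $d\pi$, using $\ker(d\pi) = \mathfrak{c} \subseteq \mathfrak{l}_{\lambda} \cap \mathfrak{n} \subseteq \mathfrak{p}_{\lambda} \cap \mathfrak{n}$.

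For (i) and (ii), note that $(P_{\lambda} \cap N)^{\circ} = P_{\lambda} \cap N^{\circ}$ is the R-parabolic of the connected reductive group $N^{\circ}$ with respect to $\lambda \in Y(N^{\circ})$, hence connected. The restriction $\pi|_{N^{\circ}} \colon N^{\circ} \twoheadrightarrow \pi(N)^{\circ}$ is a surjective morphism of connected reductive groups under which R-parabolics and R-Levis correspond, giving $\pi(P_{\lambda} \cap N^{\circ}) = P_{\pi \circ \lambda}^{\circ}$ and similarly for the Levi; the preimage formulas follow using $\ker \pi = C \subseteq L_{\lambda} \subseteq P_{\lambda}$. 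The main obstacle is part (iii): identifying the Lie algebra of an R-parabolic subgroup of the possibly disconnected group $N$ with the appropriate weight space requires careful application of the general theory of R-parabolics together with smoothness of torus centralisers, after which the remaining parts reduce to bookkeeping with the weight decomposition and the quotient $\pi$.
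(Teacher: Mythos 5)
Your treatment of parts (iii)--(v) is essentially sound and close in spirit to the paper's. For (iii) the paper likewise appeals to Springer's analysis (Theorem 13.4.2(ii)) for the parabolic case and the torus-centraliser description for the Levi case. For (iv)--(v) you use the $\lambda$-weight decomposition of $\mathfrak{n}$ together with the fact that $\mathfrak{c}$ lies in the zero weight space, whereas the paper instead does a dimension count; both hinge on the separability of the quotient $N^{\circ} \to N^{\circ}/(N^{\circ} \cap C)$, which the paper makes explicit via \cite[5.5.6(ii)]{Springer2009} and you leave implicit in the assertion that $d\pi$ is surjective with kernel $\mathfrak{c}$. That is a small omission but not a gap.

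The genuine gap is in parts (i)--(ii). You assert that $\pi|_{N^{\circ}} \colon N^{\circ} \twoheadrightarrow \pi(N)^{\circ}$ is a surjective morphism of connected \emph{reductive} groups and then invoke the standard correspondence of R-parabolics and R-Levis under such morphisms. But $N^{\circ} = \Kder C^{\circ}$ is not reductive in general: $C^{\circ} = C_{G}(K^{\circ})^{\circ}$ can have a non-trivial unipotent radical, which is then contained in $R_{u}(N^{\circ})$ (Remark~\ref{rem:nonsplit} in the paper gives a concrete example with $C^{\circ}$ itself unipotent). The result you are citing (essentially \cite[Lemma 2.11]{Bate2005}) is formulated for reductive groups, so you cannot apply it to $N^{\circ}$ directly. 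The paper circumvents this by introducing the reductive subgroup $M$ of Lemma~\ref{lem:usefulsub}: since $\pi|_{M^{\circ}} \colon M^{\circ} \to (N/C)^{\circ}$ is a surjective morphism of connected \emph{reductive} groups, \cite[Lemma 2.11(i)]{Bate2005} applies to give $\pi(P_{\lambda}^{\circ} \cap N) = \pi(P_{\lambda}^{\circ} \cap M^{\circ}) = P_{\pi\circ\lambda}^{\circ}$, and (ii) follows by taking preimages using $\ker\pi = C \le L_{\lambda} \le P_{\lambda}$. To repair your argument you would either need to pass through $M^{\circ}$ as the paper does, or supply a separate proof that the R-parabolic/R-Levi correspondence under $\pi$ holds for the non-reductive connected group $N^{\circ}$.
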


\begin{proof} For all $\lambda \in Y(K)$ we have $P_{\pi \circ \lambda}^{\circ} = P_{\pi \circ \lambda} \cap (N/C)^{\circ}$, since this is a parabolic subgroup of the connected reductive subgroup $(N/C)^{\circ}$, and similarly $L_{\pi \circ \lambda}^{\circ} = L_{\pi \circ \lambda} \cap (N/C)^{\circ}$. Now recall the subgroup $M$ from Lemma~\ref{lem:usefulsub}, and that the restriction $\pi \colon M \to N/C$ is an isogeny. Thus $\pi \colon M^{\circ} \to (N/C)^{\circ}$ is a surjective map of connected reductive groups. Thus part \ref{lie-parab-i} follows from \cite[Lemma 2.11(i)]{Bate2005}. For part \ref{lie-parab-ii}, using part \ref{lie-parab-i} we see that $\pi^{-1}(P_{\pi \circ \lambda}^{\circ}) = \pi^{-1}(\pi(P_{\lambda}^{\circ})) = (P_{\lambda}^{\circ} \cap N)C = P_{\lambda}^{\circ} \cap N$, and similarly for $L_{\lambda}$.

Part \ref{lie-parab-iii} follows from the proof of \cite[Theorem 13.4.2(ii)]{Springer2009}. The proof shows in particular that the containment (63) given on p.\ 234 there is an equality; this is the desired result for $P_{\lambda}$. Also $\Lie(L_{\lambda} \cap N)$ is the subalgebra of $\mathfrak{n}$ centralised by $\lambda(k^{\ast})$, which is precisely $\mathfrak{l}_{\lambda} \cap \mathfrak{n}$.

For part \ref{lie-parab-iv}, the left-hand side is clearly contained in the right-hand side. On the other hand $\mathfrak{n} = \Lie(N^{\circ})$ and the restriction $\pi \colon N^{\circ} \to N^{\circ}/(N^{\circ} \cap C)$ is a quotient of $N^{\circ}$ by the closed subgroup $N^{\circ} \cap C$ and is therefore separable \cite[5.5.6(ii)]{Springer2009} (cf.\ also \cite[Exercise 5.5.9(5)(c)]{Springer2009}), so $d\pi$ induces an isomorphism $\mathfrak{n}/\mathfrak{c} \to \Lie(N/C)$. Therefore,
\begin{align*}
\dim d\pi( \mathfrak{p}_{\lambda} \cap \mathfrak{n}) + \dim \mathfrak{c} &= \dim (\mathfrak{p}_{\lambda} \cap \mathfrak{n}) \\
&= \dim (P_{\lambda}^{\circ} \cap N) \\
&= \dim \pi(P_{\lambda}^{\circ} \cap N) + \dim C \\
&= \dim P_{\pi \circ \lambda} + \dim C \\
&= \dim \mathfrak{p}_{\pi \circ \lambda} + \dim C,
\end{align*}
where the first equality uses the separability of the restriction of $\pi$ to $N^{\circ}$ and the second equality uses \ref{lie-parab-iii}. We thus deduce that $\dim d\pi( \mathfrak{p}_{\lambda} \cap \mathfrak{n}) = \dim \mathfrak{p}_{\pi \circ \lambda}$, and so the two subalgebras are equal. An identical argument shows that $d\pi(\mathfrak{l}_{\lambda} \cap \mathfrak{n}) = \mathfrak{l}_{\pi \circ \lambda}$.

Part \ref{lie-parab-v} follows directly from part \ref{lie-parab-iv}, as $(d\pi)^{-1}(\mathfrak{p}_{\pi\circ\lambda})=(d\pi)^{-1}(d\pi(\mathfrak{p}_\lambda \cap \mathfrak{n} ))=\mathfrak{p}_\lambda \cap \mathfrak{n}$.
Similarly for $\mathfrak{l}_\lambda$.
\end{proof}

Our analogue of Theorem~\ref{THM:MAIN} for Lie algebras is now as follows.

\begin{theorem} \label{THM:LIE}
Let $K \le G$ be reductive algebraic groups. 
 Let $N = N_{G}(K^{\circ}) \le N_{G}(\mathfrak{k})$ and $C = C_{G}(K^{\circ}) \le C_{G}(\mathfrak{k})$, let $\pi \colon N \to N/C$ be the quotient map, and write $d\pi$ for the differential $\mathfrak{n} \to \mathfrak{n}/\mathfrak{c} = \Lie(N/C)$.

Let $\mathfrak{h}$ be a Lie subalgebra of $\mathfrak{n}$. Then $\mathfrak{h}$ is relatively $G$-completely reducible with respect to $K$ if and only if $d\pi(\mathfrak{h})$ is $\pi(N)$-completely reducible.
\end{theorem}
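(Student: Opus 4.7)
My plan is to imitate the proof of Theorem~\ref{THM:MAIN}, using Lemma~\ref{lem:liealg-parabolic} as the Lie-algebra analogue of Proposition~\ref{prop:usefulsub}. The central observation is that for any $\lambda \in Y(K)$, the hypothesis $\mathfrak{h} \subseteq \mathfrak{n}$ together with parts (iv) and (v) of Lemma~\ref{lem:liealg-parabolic} immediately yields
\[
\mathfrak{h} \subseteq \mathfrak{p}_\lambda \iff d\pi(\mathfrak{h}) \subseteq \mathfrak{p}_{\pi\circ\lambda},
\qquad
\mathfrak{h} \subseteq \mathfrak{l}_\lambda \iff d\pi(\mathfrak{h}) \subseteq \mathfrak{l}_{\pi\circ\lambda}.
\]
On the cocharacter side, the isogeny $M \to \pi(N)$ of Lemma~\ref{lem:usefulsub}(ii) guarantees that every $\tau \in Y(\pi(N))$ satisfies $\pi\circ\lambda = n\tau$ for some positive integer $n$ and some $\lambda \in Y(\Kder) \subseteq Y(K)$; since $P$ and $L$ are unchanged under positive integer rescaling, this gives $P_{\pi\circ\lambda} = P_\tau$ and $L_{\pi\circ\lambda} = L_\tau$.

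With these two ingredients the proof is formally symmetric. For the forward direction, given $\tau \in Y(\pi(N))$ with $d\pi(\mathfrak{h}) \subseteq \mathfrak{p}_\tau$, lift to $\lambda \in Y(K)$ as above, deduce $\mathfrak{h} \subseteq \mathfrak{p}_\lambda$ from the dictionary, apply the hypothesis to produce $\mu \in Y(K)$ with $P_\mu = P_\lambda$ and $\mathfrak{h} \subseteq \mathfrak{l}_\mu$, and conclude that $\nu = \pi\circ\mu$ witnesses $\pi(N)$-complete reducibility of $d\pi(\mathfrak{h})$ at $\tau$. The reverse direction swaps the roles of $\lambda$ and $\tau$ and lifts in the opposite direction.

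The main obstacle is subtle: in both directions one needs the equality of \emph{groups} $P_\mu = P_\lambda$ in $G$ (respectively $P_\nu = P_\tau$ in $\pi(N)$) to survive transfer through $\pi$, not merely equality of Lie algebras or of identity components. I expect to handle this via the reduction strategy of Theorem~\ref{THM:MAIN}: first replace $\mathfrak{h}$ by $\mathfrak{h} + \mathfrak{c}$, which affects neither the containments in $\mathfrak{p}_\lambda$ or $\mathfrak{l}_\lambda$ (since $\mathfrak{c} \subseteq \mathfrak{l}_\lambda$) nor the image $d\pi(\mathfrak{h})$ (since $\mathfrak{c} = \ker d\pi$, as follows from separability of $\pi|_{N^{\circ}}$ used in the proof of Lemma~\ref{lem:liealg-parabolic}); then use the vector space decomposition $\mathfrak{n} = \mathfrak{m} \oplus \mathfrak{c}$ from Lemma~\ref{lem:usefulsub} to reduce further to the case $\mathfrak{h} \subseteq \mathfrak{m}$; and finally replace $G$ by the reductive group $MZ(K^{\circ})^{\circ}$, in which $G^{\circ} = K^{\circ}$, $Y(G) = Y(K)$, and $C^{\circ}$ is a central torus of $G^{\circ}$. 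In this reduced set-up, relative $G$-complete reducibility with respect to $K$ coincides with the absolute notion, and $\pi$ becomes a quotient of a reductive group by a normal subgroup with central identity component; the correspondence between R-parabolic and R-Levi subgroups of $G$ containing $C$ and those of $G/C$ is then a clean bijection, so the dictionary above delivers the required group equalities and both directions close.
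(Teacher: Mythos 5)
Your plan takes a genuinely different route from the paper's own (very short) proof. The paper does \emph{not} pass through the reduction to $G = MZ(K^{\circ})^{\circ}$ that is used to prove Theorem~\ref{THM:MAIN}; instead it argues directly from Lemma~\ref{lem:liealg-parabolic}(iv),(v) -- the Lie-algebra dictionary $\mathfrak{h} \subseteq \mathfrak{p}_\lambda \iff d\pi(\mathfrak{h}) \subseteq \mathfrak{p}_{\pi\circ\lambda}$ (and likewise for $\mathfrak{l}$) -- together with the surjectivity of $Y(K) \to Y(N/C)$ up to positive multiples. Your ``dictionary'' step and your observation about lifting cocharacters via the isogeny $M \to \pi(N)$ match the paper's ingredients exactly, and your description of cocharacter lifting (as valid only up to positive integer scaling) is actually a bit more careful than the paper's phrasing.

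Two points in your proposed reduction need more care. First, you invoke a vector-space decomposition $\mathfrak{n} = \mathfrak{m} \oplus \mathfrak{c}$ ``from Lemma~\ref{lem:usefulsub}.'' That lemma only gives the group-level facts $N = MC$ and $M \cap C$ finite; the Lie-algebra decomposition is a further claim, equivalent to the injectivity of $d\pi|_{\mathfrak{m}}$, i.e.\ to the separability of the isogeny $\pi|_M \colon M \to N/C$. This does hold (the kernel $M \cap C$ is a finite reduced group acting freely by translation, so $\pi|_M$ is \'etale), but the student should supply this reason rather than cite a lemma that does not state it. Second, and more seriously, the final step -- ``the correspondence between R-parabolic and R-Levi subgroups of $G$ containing $C$ and those of $G/C$ is then a clean bijection'' -- is asserted without justification, and in the general reduced situation it is \emph{not} a clean bijection indexed by $Y(K)$. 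Indeed the paper's own remark following Theorem~\ref{thm:othersubs} exhibits $\lambda \in Y(K) \setminus Y(M)$ for which $\pi(P_\lambda \cap N) \neq P_{\pi \circ \lambda}$; Proposition~\ref{prop:usefulsub} only delivers the bijection you want for $\lambda \in Y(M)$. The cocharacter $\mu \in Y(K)$ produced by the hypothesis of relative $G$-complete reducibility of $\mathfrak{h}$ has no reason to lie in $Y(M)$, so the group-level equality $P_\lambda = P_\mu$ does not obviously transfer to $P_{\pi\circ\lambda} = P_{\pi\circ\mu}$ by the mechanism you describe. This is exactly the subtlety you flag at the start of your third paragraph, but the reduction you propose does not dispose of it: you would still need either a reformulation of the definition replacing ``$P_\lambda = P_\mu$'' by ``$\mu = u\cdot\lambda$ for $u \in R_u(P_\lambda) \cap K$,'' or an appeal to the orbit-closure criterion \cite[Theorem~3.10(iii)]{Bate2011} as in Theorem~\ref{THM:GEOMETRIC-LIE}, or a Lie-algebra analogue of \cite[\S 6.2]{Bate2005} (the reference the paper cites at the corresponding stage of the group-level proof of Theorem~\ref{THM:MAIN}, and which does not obviously transfer verbatim to subalgebras). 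Until that is filled in, the proposal does not close.
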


\begin{proof} The map $\pi \colon N\to N/C$ induces a surjection $Y(K)\to Y(N/C)$.
It follows from Lemma~\ref{lem:liealg-parabolic}(iv), (v) 
that $\mathfrak{h}$ is contained in $\mathfrak{p}_\lambda$ respectively $\mathfrak{l}_\lambda$ for $\lambda \in Y(K)$ if and only if $d\pi(\mathfrak{h})$ is contained in $\mathfrak{p}_{\pi\circ\lambda}$ (respectively $\mathfrak{l}_{\pi \circ \lambda}$). 
\end{proof}

The following is the Lie algebra counterpart of Corollary~\ref{cor:NORMAL-II}.
Since both $\mathfrak{n}/\mathfrak{c}$ and the trivial subalgebra in $\mathfrak{n}/\mathfrak{c}$ are $\pi(N)$-cr, 
Theorem~\ref{THM:LIE} gives the following.

\begin{corollary} \label{cor:NORMAL-II-LIE}
Let $K \le G$ be reductive algebraic groups. Then the subalgebras $\mathfrak{n}_{\mathfrak{g}}(K^{\circ})$ and 
$\mathfrak{c}_{\mathfrak{g}}(K^{\circ})$ of $\mathfrak{g}$ 
are relatively $G$-completely reducible with respect to $K$.
\end{corollary}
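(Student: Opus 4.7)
The plan is to apply Theorem~\ref{THM:LIE} directly to each of the two candidate subalgebras, since the corollary is explicitly described as an immediate consequence. Writing $\mathfrak{n}_{\mathfrak{g}}(K^{\circ}) = \mathfrak{n} = \Lie(N)$ and $\mathfrak{c}_{\mathfrak{g}}(K^{\circ}) = \mathfrak{c} = \Lie(C)$, both are Lie subalgebras of $\mathfrak{n}$, so the hypothesis of Theorem~\ref{THM:LIE} is satisfied. Hence it suffices to show that the images $d\pi(\mathfrak{n})$ and $d\pi(\mathfrak{c})$ are $\pi(N)$-completely reducible subalgebras of $\Lie(\pi(N))$.

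The first step is to identify these images. Recall from the proof of Lemma~\ref{lem:liealg-parabolic}(iv) that the restriction $\pi|_{N^{\circ}}$ is a separable quotient, so the induced differential $d\pi \colon \mathfrak{n} \to \Lie(N/C)$ is surjective with kernel exactly $\mathfrak{c}$. Therefore $d\pi(\mathfrak{c}) = 0$ is the trivial subalgebra, while $d\pi(\mathfrak{n}) = \Lie(\pi(N))$ is the whole ambient Lie algebra.

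The second step is to check that both extremes are $\pi(N)$-completely reducible. The zero subalgebra lies in $\mathfrak{l}_{\mu}$ for every $\mu \in Y(\pi(N))$, so given any $\lambda \in Y(\pi(N))$ with $0 \subseteq \mathfrak{p}_{\lambda}$, one just takes $\mu = \lambda$. For $\Lie(\pi(N))$, suppose $\Lie(\pi(N)) \subseteq \mathfrak{p}_{\lambda}$. Since $P_{\lambda}^{\circ}$ is a parabolic subgroup of the connected reductive group $\pi(N)^{\circ}$ and $\dim \mathfrak{p}_{\lambda} \ge \dim \pi(N)^{\circ}$, comparing dimensions forces $P_{\lambda}^{\circ} = \pi(N)^{\circ}$, so $\lambda$ is central in $\pi(N)^{\circ}$. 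But then $\pi(N)^{\circ} \subseteq L_{\lambda}$, hence $\mathfrak{l}_{\lambda} \supseteq \Lie(\pi(N))$, and once more $\mu = \lambda$ works.

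Applying Theorem~\ref{THM:LIE} then yields the corollary. Since every step reduces to either a direct invocation of Theorem~\ref{THM:LIE} or a dimension check, there is no substantive obstacle. The only technical ingredient is the identification $\ker(d\pi|_{\mathfrak{n}}) = \mathfrak{c}$, which rests on the separability of $\pi|_{N^{\circ}}$; this is already dispatched in the proof of Lemma~\ref{lem:liealg-parabolic} and may be quoted without repeating the argument.
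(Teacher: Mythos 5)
Your proof is correct and follows the same route as the paper: identify $d\pi(\mathfrak{n}) = \mathfrak{n}/\mathfrak{c}$ and $d\pi(\mathfrak{c}) = 0$, observe that the full Lie algebra and the trivial subalgebra of $\pi(N)$ are both $\pi(N)$-completely reducible, and invoke Theorem~\ref{THM:LIE}. The paper states these two completely-reducibility facts without justification, whereas you verify them directly (via the dimension comparison and the centrality of $\lambda$); this extra detail is harmless and correct.
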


As with relative $G$-complete reducibility of subgroups, a Lie subalgebra $\mathfrak{h}$ of $\mathfrak{g}$ is relatively $G$-cr with respect to a reductive subgroup $K \le G$ precisely when the $K$-orbit of any (equivalently, of every) finite tuple $\mathbf{h} \in \mathfrak{h}^{n}$ which generates $\mathfrak{h}$ as a Lie algebra is closed in $\mathfrak{g}^{n}$ \cite[Theorem 3.10(iii)]{Bate2011}. Thus results from geometric invariant theory can be brought to bear. In particular, if $\mathfrak{h}$ is not relatively $G$-cr with respect to $K$, then there exists a so-called `optimal destabilising parabolic subgroup' for $\mathfrak{h}$, see \cite[Definition 3.23 and Remark 3.24]{Bate2011}. This is a \emph{canonical} parabolic subgroup $P_{\lambda}$ $(\lambda \in Y(K))$ such that $\mathfrak{h} \subseteq \mathfrak{p}_{\lambda}$ and $\mathfrak{h} \nsubseteq \mathfrak{l}_{\lambda}$.

The following result generalises \cite[Theorem 1(2)]{McNinch2007}. It follows directly from this absolute result and Theorem~\ref{THM:LIE}, but can also be proved directly by considering optimal destabilising parabolic subgroups, mirroring \cite[Example 5.29]{Bate2013}.

\begin{theorem}
\label{THM:relative-McNinch}
In the notation of Theorem~\ref{THM:MAIN}, if $H$ is relatively $G$-completely reducible with respect to $K$, then the Lie algebra $\mathfrak{h}$ of $H$ is also relatively $G$-completely reducible with respect to $K$.
\end{theorem}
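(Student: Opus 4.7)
The plan is to combine Theorems~\ref{THM:MAIN} and~\ref{THM:LIE} with the absolute version of the result, namely \cite[Theorem 1(2)]{McNinch2007}, which asserts that in the absolute case the Lie algebra of a $G$-completely reducible subgroup is itself a $G$-completely reducible Lie subalgebra. The strategy is to transport the hypothesis down to the reductive quotient $\pi(N)$, invoke the absolute theorem there, and translate the conclusion back through Theorem~\ref{THM:LIE}.

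In detail, by Theorem~\ref{THM:MAIN} the hypothesis that $H$ is relatively $G$-completely reducible with respect to $K$ is equivalent to $\pi(H)$ being $\pi(N)$-completely reducible. Applying McNinch's absolute result inside the reductive group $\pi(N)$ yields that $\Lie(\pi(H))$ is a $\pi(N)$-completely reducible Lie subalgebra of $\Lie(\pi(N)) = \mathfrak{n}/\mathfrak{c}$. The next step is to identify $\Lie(\pi(H))$ with $d\pi(\mathfrak{h})$: since $\pi \colon N \to N/C$ is the separable quotient by the normal subgroup $C$, its differential $d\pi$ is surjective, and its restriction to the closed subgroup $H$ produces $d\pi(\mathfrak{h})$ as a subspace of $\Lie(\pi(H))$ which coincides with it when $\pi|_H$ is smooth. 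With $d\pi(\mathfrak{h})$ thus known to be $\pi(N)$-completely reducible, Theorem~\ref{THM:LIE} delivers that $\mathfrak{h}$ is relatively $G$-completely reducible with respect to $K$, as required.

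The main obstacle will be making the identification $d\pi(\mathfrak{h}) = \Lie(\pi(H))$ rigorous in positive characteristic, since if the scheme-theoretic intersection $H \cap C$ fails to be reduced then $\pi|_H$ may be inseparable and $d\pi(\mathfrak{h})$ could sit as a proper Lie subalgebra of $\Lie(\pi(H))$. To bypass this subtlety entirely one can argue directly, mirroring \cite[Example 5.29]{Bate2013}: supposing for contradiction that a Lie-generating tuple $\mathbf{x} \in \mathfrak{h}^m$ has non-closed $K$-orbit in $\mathfrak{g}^m$, Kempf--Rousseau theory furnishes an optimal cocharacter $\lambda \in Y(K)$ with $\mathfrak{h} \subseteq \mathfrak{p}_\lambda$ but $\mathfrak{h}$ contained in no R-Levi subalgebra $\mathfrak{l}_\mu$ with $P_\mu = P_\lambda$; reducing to connected $H$ via Theorem~\ref{thm:clifford}(i) and exploiting the canonical nature of the optimal parabolic $P_\lambda$ under the adjoint action of $H$ on $\mathfrak{g}^m$ then forces $H \le P_\lambda$ but $H$ contained in no such $L_\mu$, contradicting the hypothesis that $H$ is relatively $G$-completely reducible with respect to $K$.
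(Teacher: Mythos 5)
Your second, direct argument is essentially the paper's own proof. The paper likewise takes the optimal destabilising R-parabolic subgroup $P_\lambda$ ($\lambda \in Y(K)$) for $\mathfrak{h}$, observes that $H \le N_{N_G(\mathfrak{k})}(\mathfrak{h}) \le P_\lambda$ by the canonicity of $P_\lambda$ (\cite[Remark 3.24]{Bate2011}), since $H$ normalises $\mathfrak{h} = \Lie(H)$ and $H \le N \le N_G(\mathfrak{k})$, and then applies the hypothesis on $H$ to obtain $\mu \in Y(K)$ with $P_\mu = P_\lambda$ and $H \le L_\mu$, whence $\mathfrak{h} \le \mathfrak{l}_\mu$, contradicting optimality. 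Your detour through Theorem~\ref{thm:clifford}(i) to reduce to connected $H$ is unnecessary (the normaliser argument makes no connectedness assumption, and $\Lie(H) = \Lie(H^\circ)$ anyway) but does no harm.

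Your caution about the first route is well-placed, and in fact more careful than the paper itself, which asserts without justification that the theorem ``follows directly'' from \cite[Theorem 1(2)]{McNinch2007} and Theorem~\ref{THM:LIE}. The gap is precisely the one you name: surjectivity of $d\pi$ on $\mathfrak{n}$ does not ensure that the restricted quotient $\pi|_H \colon H \to \pi(H)$ is separable, so $d\pi(\mathfrak{h})$ may be a proper subalgebra of $\Lie(\pi(H))$. A concrete instance is $N = \GL_2$, $C = Z(\GL_2)$, $H = \SL_2$ in characteristic $2$: then $\pi|_H \colon \SL_2 \to \PGL_2$ is a degree-$2$ purely inseparable isogeny, and $d\pi(\mathfrak{sl}_2)$ is a $2$-dimensional proper ideal of $\Lie(\PGL_2) = \mathfrak{pgl}_2$. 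To salvage the first route one would need an extra ingredient, e.g.\ an ideal analogue of the fact that a normal subgroup of a $G$-cr subgroup is again $G$-cr (noting $d\pi(\mathfrak{h})$ is always an ideal of $\Lie(\pi(H))$, as $\pi(H)$ normalises it). The direct argument you give second, and which the paper adopts, avoids this issue altogether.
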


\begin{proof}
Suppose that $\mathfrak{h}$ is not relatively $G$-cr with respect to $K$ and let $P_{\lambda}$ ($\lambda \in Y(K)$) be the optimal destabilising parabolic subgroup for $\mathfrak{h}$. By the optimality of $P_{\lambda}$ (\cite[Remark 3.24]{Bate2011}), we have $N_{N_{G}(\mathfrak{k})}(\mathfrak{h}) \le P_{\lambda}$, and since $H \le N_{G}(\mathfrak{h})$ and $H \le N \le N_{G}(\mathfrak{k})$ we have $H \le P_{\lambda}$ also. Then by hypothesis we have $H \le L_{\mu}$ for some $\mu \in Y(K)$ such that $P_{\lambda} = P_{\mu}$, and therefore $\mathfrak{h} \le \mathfrak{l}_{\mu}$.
\end{proof}

Note that the converse of Theorem~\ref{THM:relative-McNinch} already fails in the absolute case \cite{McNinch2007}.

Here is the counterpart of Theorem~\ref{THM:GEOMETRIC} for Lie algebras, which is equivalent to 
Theorem~\ref{THM:LIE}, thanks to \cite[Theorem 3.10(iii)]{Bate2011} which is the analogue of Theorem~\ref{thm:gcr_gen_tup}
for Lie subalgebras of $ \mathfrak{g}$ and their generating tuples.
 
\begin{theorem} 
\label{THM:GEOMETRIC-LIE}
Let $K \le G$ be reductive algebraic groups, write $N = N_{G}(K^{\circ})$, $C = C_{G}(K^{\circ})$, and let $\pi \colon N \to N/C$ be the quotient map,
and write $d\pi$ for the differential $\mathfrak{n} \to \mathfrak{n}/\mathfrak{c} = \Lie(N/C)$. 
Let $\mathbf{h} \in \mathfrak{n}^{n}$ $(n \ge 1)$ and write $d\pi$ also for the map $\mathfrak{n}^{n} \to (\mathfrak{n}/\mathfrak{c})^{n}$

Then $K \cdot \mathbf{h}$ is closed in $\mathfrak{g}^{n}$ if and only if $\pi(N) \cdot d\pi(\mathbf{h})$ is closed in $(\mathfrak{n}/\mathfrak{c})^{n}$.
\end{theorem}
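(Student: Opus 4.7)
The plan is to deduce Theorem~\ref{THM:GEOMETRIC-LIE} from Theorem~\ref{THM:LIE} in precisely the same manner in which Theorem~\ref{THM:GEOMETRIC} is shown to be equivalent to Theorem~\ref{THM:MAIN}, using the Lie-subalgebra analogue of Theorem~\ref{thm:gcr_gen_tup} provided by \cite[Theorem 3.10(iii)]{Bate2011} as the bridge between orbit-closure statements and relative complete reducibility statements for Lie subalgebras.

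First, I would fix $\mathbf{h} = (h_1, \ldots, h_n) \in \mathfrak{n}^n$ and let $\mathfrak{h}$ denote the Lie subalgebra of $\mathfrak{n}$ generated by the entries of $\mathbf{h}$. Since $d\pi \colon \mathfrak{n} \to \mathfrak{n}/\mathfrak{c}$ is a homomorphism of Lie algebras, $d\pi(\mathfrak{h})$ is the Lie subalgebra of $\mathfrak{n}/\mathfrak{c} = \Lie(\pi(N))$ generated by the entries of $d\pi(\mathbf{h})$. Thus both tuples are generating tuples for their respective associative/Lie-algebraic hulls, which is the input required for \cite[Theorem 3.10(iii)]{Bate2011}.

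Next, invoking \cite[Theorem 3.10(iii)]{Bate2011} with the pair $(G, K)$ and the generating tuple $\mathbf{h}$ for $\mathfrak{h}$ shows that $K \cdot \mathbf{h}$ is closed in $\mathfrak{g}^n$ if and only if $\mathfrak{h}$ is relatively $G$-completely reducible with respect to $K$. Applying the same result in the absolute case to the reductive group $\pi(N)$ acting on its own Lie algebra $\mathfrak{n}/\mathfrak{c}$, with generating tuple $d\pi(\mathbf{h})$ for $d\pi(\mathfrak{h})$, gives that $\pi(N) \cdot d\pi(\mathbf{h})$ is closed in $(\mathfrak{n}/\mathfrak{c})^n$ if and only if $d\pi(\mathfrak{h})$ is $\pi(N)$-completely reducible. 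Theorem~\ref{THM:LIE} directly equates these two relative complete reducibility statements, and chaining the three equivalences yields the result.

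No significant obstacle should arise; the only point requiring a line of justification is that $d\pi(\mathbf{h})$ generates $d\pi(\mathfrak{h})$, which is immediate from $d\pi$ being a Lie algebra homomorphism. The substantive content has already been absorbed into Theorem~\ref{THM:LIE} and into the orbit-closure criterion of \cite[Theorem 3.10(iii)]{Bate2011}, so the proof is essentially a one-line chain of iff-statements.
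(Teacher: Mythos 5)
Your proposal is correct and is exactly the route the paper takes: the paper itself states Theorem~\ref{THM:GEOMETRIC-LIE} as being equivalent to Theorem~\ref{THM:LIE} via \cite[Theorem 3.10(iii)]{Bate2011}, which is precisely the generating-tuple/orbit-closure bridge you use. Your added observation that $d\pi(\mathbf{h})$ generates $d\pi(\mathfrak{h})$ is the only detail the paper leaves implicit, and it is handled correctly.
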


The following is the analogue of Theorem~\ref{THM:geom} for the diagonal action of $K$ on $\mathfrak{g}^{n}$.

\begin{corollary}
\label{cor:geom-LIE}
With the above notation, the following are equivalent.
\begin{enumerate}
	\item $K \cdot \mathbf{h}$ is closed in $\mathfrak{g}^{n}$. \label{lie-geom-i}
	\item $\pi(K) \cdot d\pi(\mathbf{h})$ is closed in  $(\mathfrak{n}/\mathfrak{c})^{n}$. \label{lie-geom-ii}
	\item $(d\pi)^{-1}(d\pi(K \cdot \mathbf{h}))$ is closed in $\mathfrak{g}^{n}$. \label{lie-geom-iii}
	\item Every orbit of $K$ on $(d\pi)^{-1}(d\pi(K \cdot \mathbf{h}))$ is closed in $\mathfrak{g}^{n}$. \label{lie-geom-iv}
\end{enumerate}
\end{corollary}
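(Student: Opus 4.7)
The strategy is to mirror the proof of Theorem~\ref{THM:geom}, with $d\pi$ playing the role of $\pi$ and additive translation by $\mathfrak{c}^n$ replacing multiplication by $C^n$. The equivalence \ref{lie-geom-i} $\Leftrightarrow$ \ref{lie-geom-ii} is precisely Theorem~\ref{THM:GEOMETRIC-LIE}, so the work lies in establishing \ref{lie-geom-ii} $\Leftrightarrow$ \ref{lie-geom-iii} and \ref{lie-geom-i} $\Leftrightarrow$ \ref{lie-geom-iv}.

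For \ref{lie-geom-ii} $\Leftrightarrow$ \ref{lie-geom-iii}, I would use that $\mathfrak{n}^n$ is closed in $\mathfrak{g}^n$, and that by Lemma~\ref{lem:liealg-parabolic}(iv) the map $d\pi \colon \mathfrak{n} \to \mathfrak{n}/\mathfrak{c} = \Lie(N/C)$ is a surjective linear map with kernel $\mathfrak{c}$. Choosing a linear complement to $\mathfrak{c}$ exhibits $d\pi$ as a trivial vector-bundle projection and hence as an open surjection, so a subset of $(\mathfrak{n}/\mathfrak{c})^n$ is closed if and only if its preimage in $\mathfrak{n}^n$---equivalently in $\mathfrak{g}^n$---is closed. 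Since $d\pi$ is equivariant for the adjoint action of $K$, which factors through $\pi(K)$, we have $d\pi(K \cdot \mathbf{h}) = \pi(K) \cdot d\pi(\mathbf{h})$; applying the preceding observation to this set yields \ref{lie-geom-ii} $\Leftrightarrow$ \ref{lie-geom-iii}.

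For \ref{lie-geom-i} $\Leftrightarrow$ \ref{lie-geom-iv}, I would decompose $K = \bigcup_{i=1}^{t} K^\circ x_i$ into right cosets of $K^\circ$ and compute
\[
(d\pi)^{-1}(d\pi(K \cdot \mathbf{h})) \;=\; K \cdot \mathbf{h} + \mathfrak{c}^n \;=\; \bigcup_{i=1}^{t} \bigl( K^\circ \cdot (x_i \cdot \mathbf{h}) + \mathfrak{c}^n \bigr).
\]
The key observation, analogous to the fact from the proof of Theorem~\ref{THM:geom} that $K^\circ$ centralises $C$, is that the adjoint action of $K^\circ$ on $\mathfrak{c}$ is trivial, since $C = C_G(K^\circ)$ forces conjugation by any $k \in K^\circ$ to fix $C$ pointwise. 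Consequently every $K^\circ$-orbit in $(d\pi)^{-1}(d\pi(K \cdot \mathbf{h}))$ has the form $K^\circ \cdot (x_i \cdot \mathbf{h}) + \mathbf{c}$ for some $\mathbf{c} \in \mathfrak{c}^n$, which is the image of the $K^\circ$-orbit $K^\circ \cdot (x_i \cdot \mathbf{h})$ under the variety automorphism $\mathbf{y} \mapsto \mathbf{y} + \mathbf{c}$ of $\mathfrak{g}^n$. If $K \cdot \mathbf{h}$ is closed in $\mathfrak{g}^n$, each $K^\circ \cdot (x_i \cdot \mathbf{h})$ is closed, hence so is each of its translates, and hence every $K^\circ$-orbit---and therefore every $K$-orbit---in $(d\pi)^{-1}(d\pi(K \cdot \mathbf{h}))$ is closed. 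The converse \ref{lie-geom-iv} $\Rightarrow$ \ref{lie-geom-i} is immediate from $K \cdot \mathbf{h} \subseteq (d\pi)^{-1}(d\pi(K \cdot \mathbf{h}))$.

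I do not expect a genuine obstacle: beyond the two ingredients just highlighted---$\ker d\pi = \mathfrak{c}$, implicit in Lemma~\ref{lem:liealg-parabolic}(iv), and the triviality of the adjoint $K^\circ$-action on $\mathfrak{c}$---the argument is a routine transcription of the group-theoretic proof of Theorem~\ref{THM:geom}. As a by-product, the displayed decomposition makes \ref{lie-geom-iii} $\Rightarrow$ \ref{lie-geom-iv} visible directly, in parallel with Remark~\ref{rem:geometric}, independently of Theorem~\ref{THM:GEOMETRIC-LIE}.
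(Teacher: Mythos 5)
Your proposal mirrors the paper's proof almost exactly: same use of Theorem~\ref{THM:GEOMETRIC-LIE} for \ref{lie-geom-i}~$\Leftrightarrow$~\ref{lie-geom-ii}, same quotient-topology/surjectivity argument for \ref{lie-geom-ii}~$\Leftrightarrow$~\ref{lie-geom-iii}, and the same coset decomposition combined with the observation that $K^{\circ}$ acts trivially on $\mathfrak{c}$ and that translation by $\mathfrak{c}^n$ is a variety automorphism for \ref{lie-geom-i}~$\Leftrightarrow$~\ref{lie-geom-iv}. The only cosmetic difference is that you make the ``$d\pi$ is a trivial vector-bundle projection'' reasoning explicit where the paper simply invokes the quotient topology and surjectivity of $d\pi$; the substance is identical.
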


\begin{proof} 
The equivalence of \ref{lie-geom-i} and \ref{lie-geom-ii} is given by Theorem~\ref{THM:GEOMETRIC-LIE}. The equivalence of \ref{lie-geom-ii} and \ref{lie-geom-iii} follows from the fact that the topology on $(\mathfrak{n}/\mathfrak{c})^{n}$ is the quotient topology, and we also use the fact that $d\pi$ is surjective (as shown in the proof of Lemma~\ref{lem:liealg-parabolic}\ref{lie-parab-iv} above).

Let $K = \cup_{i=1}^t K^\circ x_i$ be the decomposition of $K$ into right cosets of $K^\circ$. Writing $\mathbf{h} = (h_{1},\ldots,h_{n})$ we have 
\begin{align*}
(d\pi)^{-1}(d\pi(K \cdot \mathbf{h})) 
&= \bigcup_{i=1}^t (d\pi)^{-1}(d\pi \left(K^\circ \cdot (x_i \cdot \mathbf{h}) \right)\\
&= \bigcup_{i=1}^t  \{ (x \cdot (x_i \cdot h_1)+c_1,\ldots, x \cdot ( x_i \cdot h_n))+c_n) \ssep x \in K^\circ,\ c_{i} \in  \mathfrak{c}\},\\
&= \bigcup_{i=1}^t  \{ (x \cdot ((x_i  \cdot h_1)+c_1),\ldots, x \cdot ((x_i \cdot h_n) +c_n)) \ssep x \in K^\circ,\ c_{i} \in  \mathfrak{c}\},
\end{align*}
where the last equality follows since $K^{\circ}$ centralises $\mathfrak{c}$.

Now if $K \cdot \mathbf{h}$ is closed in $\mathfrak{g}^{n}$, then so is $K^\circ \cdot (x_i \cdot \mathbf{h})$ for each $i$.
Thus $(d\pi)^{-1}(d\pi(K\cdot \mathbf{h}))$ is a union of $K^\circ$-orbits, each of which is translated to one of the closed $K^\circ$-orbits 
$K^\circ \cdot (x_i \cdot \mathbf{h})$ by
an element of $\mathfrak{c}^{n}$.
As translation is a variety automorphism $\mathfrak{g}^{n} \to \mathfrak{g}^{n}$, it follows that every $K^\circ$-orbit in $(d\pi)^{-1}(d\pi(K \cdot \mathbf{h}))$ is closed in $\mathfrak{g}^{n}$.
Consequently, every $K$-orbit in $(d\pi)^{-1}(d\pi(K \cdot \mathbf{h}))$ is closed in $\mathfrak{g}^{n}$ as well.
So \ref{lie-geom-iv} follows from \ref{lie-geom-i} and the reverse implication is clear. 
\end{proof}

Comments concerning the equivalences in Corollary~\ref{cor:geom-LIE} similar to those in Remark~\ref{rem:geometric} apply. Moreover, one defines relative $G$-irreducibility of Lie subalgebras of $\mathfrak{g}$ (with respect to $K$) in terms of $K$-stable points on $\mathfrak{g}^{n}$ \cite[Definition 3.12]{Bate2011}, and results analogous to Corollary~\ref{cor:irred} and \ref{cor:irredgeom} apply in this case, with the obvious modifications.

\section{Changing the field} \label{sec:rationality}

In this section we prove Theorem~\ref{THM:MAIN-RATIONAL}, which generalises Theorem~\ref{THM:MAIN} to arbitrary fields. This allows us to generalise many other results, including Theorem~\ref{THM:GEOMETRIC}. First we recall some relevant notions from \cite[\S 4]{Bate2011}.
In this section $k$ denotes an arbitrary field and $\overline k$ is the algebraic closure of $k$. Algebraic groups and varieties are taken to be defined
over $\overline k$. If $V$ is a $k$-variety and $k'/k$ is an algebraic extension then we denote the set of $k'$-points of $V$ by $V(k')$. The set of $k$-defined cocharacters of a $k$-group $M$ is denoted $Y_k(M)$.  
We say that a $G$-variety $V$ is defined over $k$ if both $V$ and the action of $G$ on $V$ are defined over $k$.

We begin with the definition of relative $G$-complete reducibility over $k$, 
\cite[Definition 4.1]{Bate2011}.

\begin{definition}
\label{defn:relativelyk}
Let $K \le G$ be reductive algebraic $k$-groups.
A subgroup $H$ of $G$
is \emph{relatively $G$-completely reducible over $k$ with respect to $K$} if
for every $\lambda \in Y(K)$ such that $P_\lambda$ is $k$-defined and 
$H\subseteq P_\lambda$,
there exists $\mu \in Y(K)$ such that
$P_\lambda = P_\mu$, $L_\mu$ is $k$-defined and $H \subseteq L_\mu$.
If $K=G$, then we say
that $H$ is
\emph{$G$-completely reducible over $k$}, see also 
\cite{Serre2003-2004} and \cite[\S 5]{Bate2005}.
\end{definition}

When $k = \overline{k}$ this definition coincides with Definition~\ref{def:gcr}, since in this case each R-parabolic subgroup and R-Levi subgroup of $G$ is $k$-defined.

According to \cite[Lemma~4.8]{Bate2011}, when discussing relative $G$-complete reducibility over $k$ with respect to $K$,
it suffices to consider R-parabolic subgroups and R-Levi subgroups of the form $P_{\lambda}$ and $L_{\lambda}$ for 
$\lambda \in Y_k(K)$,
rather than all $k$-defined R-parabolic subgroups and R-Levi subgroups arising from a cocharacter of $K$.

\subsection{Rational analogue of Theorem~\ref{THM:MAIN}}

We now embark on proving the rational version of Theorem~\ref{THM:MAIN}. We need to generalise results from Section~\ref{sec:proof} and from \cite{Bate2005}. From now on we suppose that in the setting of Theorem~\ref{THM:MAIN}, the groups $G$, $K$, $N$ and $C$ are all $k$-defined. Then the quotient map $\pi \colon N \to N/C$ is a $k$-defined morphism. Let $M$ be a reductive subgroup of $G$ guaranteed by Lemma~\ref{lem:usefulsub}, and let $\pi_{M}$ denote the restriction of $\pi$ to $M$, so that $\pi_{M}$ is an isogeny $M \to N/C$.

\begin{lemma} \label{lem:usefulsub-rational}
With the above assumptions, the subgroup $M$ may be taken to be $k$-defined, and then $\pi_{M}$ is a $k$-defined isogeny.
\end{lemma}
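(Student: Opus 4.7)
The plan is to revisit the construction of $M$ from the proof of Lemma~\ref{lem:usefulsub} and verify that each step can be performed over $k$, identifying the single non-trivial input: the rational version of Brion's theorem.

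First I would check that every object entering the original proof is already $k$-defined under our standing hypothesis. Since $K$ is a $k$-defined algebraic group, so are its identity component $K^{\circ}$ (a standard fact for smooth $k$-groups), its derived subgroup $[K^{\circ},K^{\circ}]$ (the derived subgroup of a connected $k$-defined group is $k$-defined), and hence also $N = N_G(K^\circ)$ and $C = C_G(K^\circ)$ (normalisers and centralisers of $k$-defined subgroups being $k$-defined for smooth groups). Consequently the quotient $\sigma \colon N \twoheadrightarrow N/[K^{\circ},K^{\circ}]$ and $\pi \colon N \twoheadrightarrow N/C$ are both $k$-defined morphisms of $k$-defined algebraic groups.

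The only non-formal step in the construction of $M$ is the choice of the finite subgroup $F \le N/[K^{\circ},K^{\circ}]$ satisfying $N/[K^{\circ},K^{\circ}] = F \cdot \sigma([K^{\circ},K^{\circ}]C)$, which is obtained via \cite[Theorem 1.1]{Brion2015}. The key point I would emphasise is that this theorem is stated and proved over an arbitrary base field, producing a $k$-defined finite subgroup $F$ of the $k$-defined algebraic group $N/[K^{\circ},K^{\circ}]$. This is the step I expect to be the main (and essentially only) obstacle, and handling it amounts to citing the $k$-rational content of Brion's theorem directly; no further descent argument should be needed.

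Granted such a $k$-defined $F$, we set $M \deq \sigma^{-1}(F)$. This is $k$-defined as the preimage of a $k$-defined subgroup under a $k$-defined morphism, and by Lemma~\ref{lem:usefulsub} it already has the properties $M^{\circ} = [K^{\circ},K^{\circ}]$, $M \cap C$ finite and $N = MC$. Finally, $\pi_{M}$ is a $k$-defined morphism, being the restriction of the $k$-defined morphism $\pi$ to the $k$-defined subgroup $M$, and it is an isogeny by Lemma~\ref{lem:usefulsub}\ref{usefulsub-ii}. This completes the argument.
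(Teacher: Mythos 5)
Your proposal is correct and matches the paper's proof in all essentials: trace the construction of $M$ from Lemma~\ref{lem:usefulsub}, observe that the formal steps (derived subgroup, quotient map, preimage, restriction) all preserve $k$-definedness, and cite the $k$-rational content of \cite[Theorem 1.1]{Brion2015} for the existence of a $k$-defined finite complement $F$. One small caveat: you assert that normalisers and centralisers of $k$-defined subgroups are automatically $k$-defined, which can fail over imperfect fields; the paper sidesteps this by simply \emph{assuming} that $N$ and $C$ are $k$-defined (a standing hypothesis in that section), so your argument goes through, but that particular sentence should be dropped or replaced by the hypothesis.
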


\begin{proof} By assumption, $K$ is $k$-defined, hence so are $K^{\circ}$ and $\Kder$, \cite[Ch.~I 1.2 Proposition (b); 2.3 Corollary]{Borel1991}. Therefore so is the quotient map $N \to N/\Kder$. Now \cite[Theorem 1.1]{Brion2015} applies to the more general setting of $k$-group schemes with finite quotients, hence the subgroup $M$ constructed in Lemma~\ref{lem:usefulsub} is the pre-image (under $\pi$) of a finite $k$-defined subgroup of $N/C$, hence is a $k$-defined subgroup of $G$. Now $\pi$ is $k$-defined hence so is its restriction to $M$.
\end{proof}

\begin{lemma} \label{lem:k-cochar-morphism}
Keeping the above assumptions, suppose that $C$ is normal in $G$. If $\lambda \in Y_{k}(G/C)$ then there exists $\mu \in Y_{k}(G/C)$ such that $P_{\mu} = P_{\lambda}$, $L_{\mu} = L_{\lambda}$ and $\mu = \pi \circ \nu$ for some $\nu \in Y_{k}(M)$.
\end{lemma}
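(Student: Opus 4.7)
The plan is to lift $\lambda$, after replacing it by a suitable positive multiple, through the $k$-defined isogeny $\pi_{M} \colon M \to G/C$ provided by Lemma~\ref{lem:usefulsub-rational} (here $N = G$ since $C$ is assumed normal in $G$). This reduces the statement to a standard lifting of cocharacters along an isogeny plus a Galois-descent argument for the $k$-rationality.

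\emph{Step 1 (algebraic lift).} The image $\lambda(\overline{k}^{\ast})$ lies in a maximal torus $T'$ of $G/C$, and the identity component $T$ of $\pi_{M}^{-1}(T')$ is a maximal torus of $M$ with $\pi_{M}(T) = T'$. The restriction $\pi_{M}|_{T} \colon T \to T'$ is an isogeny of tori, so the induced map $Y(T) \to Y(T')$ is injective with image of finite index $n_{0}$. Hence there exists a positive integer $n$ (for example $n = n_{0}$, after possibly enlarging to clear $\lambda$) and a unique $\nu \in Y(T) \subseteq Y(M)$ with $\pi \circ \nu = n\lambda$.

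\emph{Step 2 (rationality via Galois descent).} The absolute Galois group $\Gamma = \mathrm{Gal}(\overline{k}/k)$ acts on $Y(M)$ and on $Y(G/C)$. Since $\pi_{M}$ is $k$-defined, post-composition with $\pi$ is $\Gamma$-equivariant. By hypothesis $n\lambda$ is $\Gamma$-fixed, so for each $\sigma \in \Gamma$,
\[
\pi \circ {}^{\sigma}\!\nu \;=\; {}^{\sigma}(\pi \circ \nu) \;=\; {}^{\sigma}(n\lambda) \;=\; n\lambda \;=\; \pi \circ \nu.
\]
Injectivity of the induced map on cocharacters then forces ${}^{\sigma}\!\nu = \nu$ for every $\sigma$, so $\nu \in Y_{k}(M)$ as required.

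\emph{Step 3 (matching parabolic and Levi).} Set $\mu = n\lambda = \pi \circ \nu$. Then $\mu \in Y_{k}(G/C)$ and is of the desired form. Since $\overline{k}^{\ast}$ is divisible we have $\mu(\overline{k}^{\ast}) = \lambda((\overline{k}^{\ast})^{n}) = \lambda(\overline{k}^{\ast})$, so $P_{\mu} = P_{\lambda}$ and $L_{\mu} = L_{\lambda}$.

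The main obstacle is Step~2: because $\pi_{M}$ need not be separable when $\mathrm{char}(k)$ divides $|\ker \pi_{M}|$, one cannot lift $\lambda$ by differentiation and inversion; the rationality must be extracted from the uniqueness of the lift combined with $\Gamma$-equivariance of post-composition with $\pi$. The rest of the argument is formal once Lemma~\ref{lem:usefulsub-rational} is in hand.
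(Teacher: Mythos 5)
Your proof is correct, and the core idea is the same as the paper's: replace $\lambda$ by a suitable positive multiple $n\lambda$ so that it lies in the image of the map on cocharacters induced by the $k$-defined isogeny $\pi_{M}$, and observe that passing to a multiple changes neither $P_{\lambda}$ nor $L_{\lambda}$. Where you diverge is in how $k$-rationality of the lift is obtained. The paper works exclusively with $k$-defined tori from the start: it takes $T = \lambda(\overline{k}^{\ast})$ (a $k$-split $1$-dimensional torus, since $\lambda \in Y_{k}(G/C)$) and $S = \pi_{M}^{-1}(T)^{\circ}$ (automatically $k$-defined and $k$-split), and considers the injective finite-index map $Y_{k}(S) \to Y_{k}(T)$ directly, so the lift $\nu$ lands in $Y_{k}(S) \subseteq Y_{k}(M)$ with no further argument. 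You instead lift through a maximal torus $T$ of $M$ (not assumed $k$-defined) and then recover rationality by Galois descent, using that post-composition with the $k$-morphism $\pi$ is $\Gamma$-equivariant and that the induced map on cocharacters is injective. That is a valid route, and your remark that one cannot invert $d\pi_{M}$ when the isogeny is inseparable correctly identifies why some argument beyond differentiation is needed.

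One small point worth tightening in your Step 2: the injectivity you invoke should be justified at the level of $Y(M) \to Y(G/C)$ as a map of sets, not just $Y(T) \to Y(T')$, because ${}^{\sigma}\nu$ need not lie in $Y(T)$ when $T$ is not $k$-defined. The set map $Y(M) \to Y(G/C)$, $\nu \mapsto \pi_{M} \circ \nu$, is indeed injective because two cocharacters with the same image under $\pi_{M}$ differ by a morphism $\overline{k}^{\ast} \to \ker \pi_{M}$ into a finite group, which is constant and hence trivial. Alternatively, one can note that both $\nu$ and ${}^{\sigma}\nu$ must factor through the $k$-defined $1$-dimensional torus $\pi_{M}^{-1}(\lambda(\overline{k}^{\ast}))^{\circ}$, which is essentially the paper's observation, and then the injectivity of the cocharacter map for a $1$-dimensional torus applies. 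With that clarification your argument is complete; the paper's version is simply shorter because it never leaves the $k$-defined tori and therefore never needs the descent step.
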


\begin{proof} 
	With $M$ and $\pi_{M}$ as above, $\pi_M$ is an isogeny and hence $\pi_M$ is quasi-central. Since $d\pi:\mathfrak{n}\to \mathfrak{n/c}$ is the quotient map, the kernel of $d\pi_M$ is just $\Lie(M)\cap \Lie(C_G(K^\circ))\subseteq \Lie(M)\cap \Lie(C_G(M^\circ))$, and the latter is central in $\Lie(M)$. Then $\pi_M$ is central, cf.~\cite[\S 22.3]{Borel1991}. Thus, by \cite[22.5 Corollary]{Borel1991}, the preimage of a $k$-defined torus is $k$-defined. Let $T = \lambda(\overline{k}^{\ast})$ and let $S = \pi_{M}^{-1}(T)^{\circ}$. Thus $S$ and $T$ are $1$-dimensional $k$-defined tori. Since $\pi$ is a $k$-morphism, it induces a map $Y_{k}(S) \to Y_{k}(T)$ whose image has finite index, say $n$. Then $\mu = n\lambda$ satisfies the required conditions.
\end{proof}

The following is the rational version of Theorem~\ref{THM:MAIN} in the special case that $G = N$ and $C^{\circ}$ is a torus. The proof of Theorem~\ref{THM:MAIN-RATIONAL} below proceeds by reducing to this special case.
\begin{lemma} \label{lem:mainthm-rational-nondegenerate}
Let $G$ be a $k$-defined reductive group. Let $C$ be a $k$-defined normal subgroup of $G$ such that $C^{\circ}$ is a torus, and let $\pi$ be the quotient map $G \to G/C$. Let $M$ be a subgroup guaranteed by Lemma~\ref{lem:usefulsub-rational}, so that $M \cap C$ is finite and $\pi$ induces an isogeny $\pi_{M} \colon M \to G/C$.

Then a subgroup $H$ of $G$ is $G$-completely reducible over $k$ if and only if $\pi(H)$ is $(G/C)$-completely reducible over $k$.
\end{lemma}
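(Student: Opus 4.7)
The strategy is to adapt the reduction at the end of the proof of Theorem~\ref{THM:MAIN} --- which transfers the assertion to the absolute result of \cite[\S 6.2]{Bate2005} in the special case when $K^\circ$ is normal in $G$ and $C = C_G(K^\circ)$ has $C^\circ$ a central torus --- and to upgrade it to a $k$-rational reduction by invoking Lemma~\ref{lem:k-cochar-morphism} and Proposition~\ref{prop:usefulsub} in their $k$-defined form. The two directions of the equivalence will be handled essentially symmetrically, once a key preliminary step is established.

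Since $C$ is normal in $G$ and $C^\circ$ is a torus, $C^\circ$ is central in $G^\circ$ (a normal subtorus of a connected reductive group is central). The preliminary step is to show that for every $\mu \in Y_k(G)$ there exists $\tilde\mu \in Y_k(M)$ with $P_{\tilde\mu} = P_\mu$ and $L_{\tilde\mu} = L_\mu$. I would fix a $k$-defined maximal torus $T \le G^\circ$ containing both $\mu(\overline{k}^\ast)$ and $C^\circ$, which exists by standard results on $k$-defined tori in reductive groups. Since $G^\circ = M^\circ \cdot C^\circ$ with $M^\circ \cap C^\circ$ finite, choosing a $k$-defined maximal torus $T_M \le M^\circ$ contained in $T$ yields a $k$-rational decomposition $Y(T)_{\mathbb{Q}} = Y(T_M)_{\mathbb{Q}} \oplus Y(C^\circ)_{\mathbb{Q}}$. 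Projecting $\mu$ onto the first summand and clearing denominators produces $\tilde\mu \in Y_k(T_M) \subseteq Y_k(M)$; since $C^\circ$ is central in $G^\circ$, the discarded $C^\circ$-component acts trivially by conjugation, so $P_{\tilde\mu} = P_\mu$ and $L_{\tilde\mu} = L_\mu$.

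Given the preliminary step, the forward direction proceeds as follows. Assume $H$ is $G$-completely reducible over $k$, and take $\lambda \in Y_k(G/C)$ with $\pi(H) \le P_\lambda$. By Lemma~\ref{lem:k-cochar-morphism} we may assume $\lambda = \pi \circ \nu$ for some $\nu \in Y_k(M)$; Proposition~\ref{prop:usefulsub} then gives $H \le \pi^{-1}(P_\lambda) = P_\nu$, with $P_\nu$ $k$-defined. The hypothesis yields $\mu \in Y_k(G)$ with $P_\mu = P_\nu$, $L_\mu$ $k$-defined, and $H \le L_\mu$; the preliminary step replaces $\mu$ by $\tilde\mu \in Y_k(M)$, and $\pi \circ \tilde\mu \in Y_k(G/C)$ is then the desired witness: a further application of Proposition~\ref{prop:usefulsub} yields $P_{\pi \circ \tilde\mu} = \pi(P_{\tilde\mu}) = \pi(P_\nu) = P_\lambda$, $L_{\pi \circ \tilde\mu}$ is $k$-defined, and $\pi(H) \le \pi(L_{\tilde\mu}) = L_{\pi \circ \tilde\mu}$. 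The backward direction is essentially symmetric: given $\nu \in Y_k(G)$ with $P_\nu$ $k$-defined and $H \le P_\nu$, use the preliminary step to replace $\nu$ by $\tilde\nu \in Y_k(M)$; apply the $(G/C)$-cr hypothesis to $\pi(H) \le P_{\pi \circ \tilde\nu}$ to obtain a $k$-defined R-Levi of $G/C$; realise it as $\pi \circ \mu'$ for $\mu' \in Y_k(M)$ via Lemma~\ref{lem:k-cochar-morphism}, and pull back via Proposition~\ref{prop:usefulsub} to produce the required $k$-defined R-Levi $L_{\mu'}$ of $G$ containing $H$, with $P_{\mu'} = P_{\tilde\nu} = P_\nu$.

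The main obstacle is the preliminary step: ensuring $k$-rationality of the decomposition of $\mu$ into its $M$- and $C^\circ$-components, so that $\tilde\mu$ is genuinely $k$-defined rather than merely a cocharacter over $\overline k$. This hinges on the existence of a $k$-defined maximal torus of $G^\circ$ simultaneously containing $\mu(\overline{k}^\ast)$ and $C^\circ$, together with the $k$-rationality of both $C^\circ$ and the isogeny $\pi_M \colon M \to G/C$ inside that torus. Once this is secured, the remainder is a faithful translation of the $\overline k$-argument of \cite[\S 6.2]{Bate2005} into the $k$-defined setting, with Lemma~\ref{lem:k-cochar-morphism} and Proposition~\ref{prop:usefulsub} providing the necessary bookkeeping for $k$-defined R-parabolics and R-Levis.
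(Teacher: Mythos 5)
Your ``preliminary step'' --- that every $\mu \in Y_{k}(G)$ can be replaced by some $\tilde\mu \in Y_{k}(M)$ with $P_{\tilde\mu} = P_{\mu}$ and $L_{\tilde\mu} = L_{\mu}$ --- is false when $G$ is disconnected, which is precisely the generality the lemma must cover. The justification you give (``the discarded $C^{\circ}$-component acts trivially by conjugation'') is only valid on $G^{\circ}$: since $C$ is normal in $G$, $C^{\circ}$ is central in $G^{\circ}$, but $C^{\circ}$ need not be central in $G$. If $n\mu = \tilde\mu + n\tau$ with $\tau \in Y_{k}(C^{\circ})$ and $g \notin G^{\circ}$, one computes $(n\mu)(a)\cdot g = \bigl(\tilde\mu(a)\cdot g\bigr)\,c(a)$ where $c(a) = g^{-1}\tau(a)^{n}g\,\tau(a)^{-n} \in C^{\circ}$ is a possibly nontrivial cocharacter, so the existence of $\lim_{a \to 0}(n\mu)(a)\cdot g$ and $\lim_{a\to 0}\tilde\mu(a)\cdot g$ can genuinely differ. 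For a concrete counterexample fitting the hypotheses, take $K = K^{\circ} = T$ a nontrivial torus, $G = N = T \rtimes \langle w\rangle$ with $w$ inverting $T$, so $C = C^{\circ} = T$ and $M$ is finite with $M^{\circ}$ trivial. Then $Y_{k}(M)$ contains only the trivial cocharacter, whose R-parabolic is all of $G$, while any nontrivial $\mu \in Y_{k}(T)$ has $P_{\mu} = T \subsetneq G$; no $\tilde\mu \in Y_{k}(M)$ can recover $P_{\mu}$.

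Because you invoke this uniform replacement both for the witnessing $\mu$ in the forward direction and for an arbitrary $\nu \in Y_{k}(G)$ with $H \le P_{\nu}$ in the backward direction, both directions of your argument break at this point. The paper does not make this uniform claim. In the forward direction it instead produces a cocharacter in $Y_{k}(M)$ directly by conjugating: it takes $u \in R_{u}(P_{\lambda})(k)$ with $H \le L_{u\cdot\lambda}$ (via \cite[Lemma 2.5(iii)]{Bate2013}) and observes $u \in R_{u}(P_{\lambda}\cap M)(k)$ (via \cite[Lemma 6.15(iii)]{Bate2005}), so $u\cdot\lambda \in Y_{k}(M)$ --- there is no attempt to match the original $L_{\mu}$. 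In the backward direction, the paper decomposes $\lambda = \sigma + \tau$ with $\sigma \in Y_{k}(M)$ and $\tau \in Y_{k}(C)$, makes a similar conjugation, and then argues $H \le P_{\lambda}\cap L_{\sigma} = P_{\tau}\cap L_{\sigma} = L_{\tau}\cap L_{\sigma} \le L_{\lambda}$; the identity $P_{\tau}\cap L_{\sigma} = L_{\tau}\cap L_{\sigma}$ plays exactly the role that your (false) step tried to shortcut. To repair your proof you would need to replace the preliminary step with these more delicate local arguments on the non-identity components.
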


\begin{proof}
As in the proof of Theorem~\ref{THM:MAIN} it suffices to assume that $H \le M$, since $H$ is contained in a parabolic subgroup $P_{\lambda}$ ($\lambda \in Y_{k}(G)$) if and only if $HC \le P_{\lambda}$ if and only if $HC \cap M \le P_{\lambda}$, and similarly for $L_{\lambda}$.

So suppose $H$ is $G$-cr over $k$, and suppose that $\pi(H) \le P_{\mu}(G/C)$ where $\mu \in Y_{k}(G/C)$. By Lemma~\ref{lem:k-cochar-morphism} we can assume that $\mu = \pi \circ \lambda$ for some $\lambda \in Y_{k}(M)$. Then $H \le \pi^{-1}(\pi(H)) \le \pi^{-1}(P_{\mu}) = P_{\lambda}(G)$, by Proposition~\ref{prop:usefulsub}. Since $H$ is $G$-cr over $k$, by \cite[Lemma 2.5(iii)]{Bate2013} there exists $u \in R_{u}(P_{\lambda})(k)$ such that $H \le L_{u \cdot \lambda}$. Now by \cite[Lemma 6.15(iii)]{Bate2005} we have $u \in R_{u}(P_{\lambda} \cap M)(k)$, so $u \cdot \lambda \in Y_{k}(M)$. Thus using Proposition~\ref{prop:usefulsub} again, we have $\pi(H) \le \pi(L_{u \cdot \lambda} \cap M) = L_{\pi(u) \cdot \mu}$. Moreover $P_{\pi(u) \cdot \mu} = \pi(P_{u \cdot \lambda}) = \pi(P_{\lambda}) = P_{\mu}$, also by Proposition~\ref{prop:usefulsub}, so $L_{\pi(u) \cdot \mu}$ is an R-Levi subgroup of $P_{\mu}$. It follows that $\pi(H)$ is $(G/C)$-cr over $k$.

Conversely, suppose that $\pi(H)$ is $(G/C)$-cr over $k$, and suppose that $H \le P_{\lambda}$ for some $\lambda \in Y_{k}(G)$. Then $\pi(H) \le P_{\mu}$ where $\mu = \pi \circ \lambda \in Y_{k}(G/C)$. As $\pi(H)$ is $(G/C)$-cr over $k$, there is an R-Levi subgroup $L$ of $P_{\mu}$ such that $\pi(H) \le L$. By the proof of Lemma~\ref{lem:k-cochar-morphism}, we can replace $\mu$ and $\lambda$ by some positive integer multiples (without changing the corresponding R-parabolic or R-Levi subgroups) so that there exists $\sigma \in Y_{k}(M)$ with $\pi \circ \sigma = \mu$. We have $\lambda(\overline{k}^{\ast}) \le \sigma(\overline{k}^{\ast})C^{\circ}$, and it follows that there exists a $k$-cocharacter $\tau \in Y_{k}(C)$ such that $\lambda = \sigma + \tau$.

Since $\pi(H)$ is $(G/C)$-cr over $k$, there exists $\nu \in Y_{k}(G/C)$ so that $P_{\nu} = P_{\mu}$ and $\pi(H) \le L_{\nu}$. By Lemma~\ref{lem:k-cochar-morphism} we can choose $\nu$ so that $\nu = \pi \circ \sigma'$ for some $\sigma' \in Y_{k}(M)$. Using Proposition~\ref{prop:usefulsub} we have $P_{\sigma'} = \pi^{-1}(P_{\nu}) = \pi^{-1}(P_{\mu}) = P_{\sigma}$. By \cite[Corollary 2.6]{Bate2013}, we can adjust $\nu$ and $\sigma'$ (without affecting the corresponding R-parabolic and R-Levi subgroups) so that $\sigma' = u \cdot \sigma$ for some $u \in R_{u}(P_{\sigma} \cap M)(k) = R_{u}(P_{\sigma})(k) = R_{u}(P_{\lambda})(k)$. Thus, replacing $\lambda$ by $u \cdot \lambda$ if necessary, we can assume that $\pi(H) \le L_{\mu}$. We have $H \le \pi_{M}^{-1}(L_{\mu}) = L_{\sigma} \cap M$ by Proposition~\ref{prop:usefulsub}. We have $\lambda \in Y_{k}(L_{\sigma})$, and
\[ H \le P_{\lambda} \cap (L_{\sigma} \cap M) \le P_{\lambda} \cap L_{\sigma} = P_{\tau} \cap L_{\sigma}. \]
Now $L_{\sigma}^{\circ} \le (L_{\sigma} \cap M^{\circ})C^{\circ} \le P_{\tau} \cap L_{\sigma}$, hence $R_{u}(P_{\tau} \cap L_{\sigma}) = \{1\}$, and so $P_{\tau} \cap L_{\sigma} = L_{\tau} \cap L_{\sigma}$. Thus $H \le L_{\tau} \cap L_{\sigma} \le L_{\lambda}$, and so $H$ is $G$-cr over $k$.
\end{proof}

The following is now the rational version of Theorem~\ref{THM:MAIN}.

\begin{theorem} \label{THM:MAIN-RATIONAL}
Let $K \le G$ be reductive algebraic $k$-groups, write $N = N_{G}(K^{\circ})$, $C = C_{G}(K^{\circ})$, and let $\pi \colon N \to N/C$ be the quotient map. Suppose that $N$ and $C$ are $k$-defined, and let $H \le N$. 

Then $H$ is relatively $G$-completely reducible over $k$ with respect to $K$ if and only if $\pi(H)$ is $\pi(N)$-completely reducible  over $k$.
\end{theorem}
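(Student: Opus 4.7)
The plan is to mirror the proof of Theorem~\ref{THM:MAIN} while tracking $k$-rationality at every step, reducing ultimately to the absolute statement provided by Lemma~\ref{lem:mainthm-rational-nondegenerate}. Throughout, Lemma~\ref{lem:usefulsub-rational} furnishes a $k$-defined reductive subgroup $M \le N$ with $M^{\circ} = \Kder$, $M \cap C$ finite, and $N = MC$; this is what lets the reductions of Theorem~\ref{THM:MAIN} be carried out rationally. Following the remark after \cite[Lemma~4.8]{Bate2011}, we may restrict attention throughout to cocharacters in $Y_{k}(K)$ rather than all $\lambda \in Y(K)$ with $P_{\lambda}$ $k$-defined.

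The first reduction replaces $H$ by $HC \cap M$. Since $C \le L_{\lambda} \le P_{\lambda}$ for every $\lambda \in Y(K)$ and $HC = (HC \cap M)C$, a subgroup $H \le N$ is contained in $P_{\lambda}$ (respectively $L_{\lambda}$) exactly when $HC \cap M$ is, and moreover $\pi(H) = \pi(HC) = \pi(HC \cap M)$. This is a purely set-theoretic equivalence, indifferent to whether $P_{\lambda}$ or $L_{\lambda}$ is $k$-defined, so it is compatible with Definition~\ref{defn:relativelyk}. The second reduction replaces $G$ by the $k$-defined reductive subgroup $G' = MZ(K^{\circ})^{\circ}$, which contains both $K^{\circ}$ (since $(G')^{\circ} = \Kder Z(K^{\circ})^{\circ} = K^{\circ}$) and the (reduced) $H$. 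The rational analogue of Remark~\ref{rem:GcrMcr} — namely that for $\lambda \in Y_{k}(K)$ one has $P_{\lambda}(G') = P_{\lambda}(G) \cap G'$ and $L_{\lambda}(G') = L_{\lambda}(G) \cap G'$, both $k$-defined in $G'$ — then shows that $H$ is relatively $G$-cr over $k$ with respect to $K$ if and only if $H$ is relatively $G'$-cr over $k$ with respect to $K^{\circ}$. Moreover $\pi(G') = \pi(M) = \pi(N)$, so the codomain of $\pi$ is unaltered.

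After these reductions we have $Y_{k}(G') = Y_{k}(K^{\circ}) = Y_{k}(K)$, and hence being relatively $G'$-cr over $k$ with respect to $K$ coincides with being $G'$-cr over $k$. The subgroup $C' := C_{G'}(K^{\circ}) = C \cap G'$ is normal in $G'$ with identity component $(C')^{\circ} = Z(K^{\circ})^{\circ}$ a $k$-defined torus, and the restriction $\pi|_{G'}$ induces an isomorphism $G'/C' \cong \pi(N)$. Lemma~\ref{lem:mainthm-rational-nondegenerate} now applies to $G'$ and $C'$ and delivers precisely the desired equivalence between $H$ being $G'$-cr over $k$ and $\pi(H)$ being $(G'/C') = \pi(N)$-cr over $k$.

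The main technical point is the rational version of Remark~\ref{rem:GcrMcr} used in the second reduction: one must check that a $k$-defined R-Levi complement in $P_{\lambda}(G')$ corresponds to a $k$-defined R-Levi complement in $P_{\lambda}(G)$ and vice versa, both of which must come from cocharacters of $K$. Beyond this, the delicate $k$-rational manipulation of cocharacters — replacing an arbitrary $k$-defined cocharacter of $G/C$ by one of the form $\pi \circ \nu$ with $\nu \in Y_{k}(M)$ — has already been absorbed into the proof of Lemma~\ref{lem:mainthm-rational-nondegenerate} via Lemma~\ref{lem:k-cochar-morphism}, so no further cocharacter surgery is required in the present argument.
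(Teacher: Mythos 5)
Your proof follows essentially the same route as the paper: invoke Lemma~\ref{lem:usefulsub-rational} to get a $k$-defined $M$, reduce $H$ to $HC \cap M$, shrink $G$ to $MZ(K^{\circ})^{\circ}$ so that $Y_k(G) = Y_k(K)$ and relative complete reducibility over $k$ becomes absolute, and then apply Lemma~\ref{lem:mainthm-rational-nondegenerate}. The only difference is presentational: the paper is terse, simply asserting that "the same reduction holds in this rational setting," whereas you spell out the rational analogue of Remark~\ref{rem:GcrMcr} via intersections of R-parabolic and R-Levi subgroups, which is a reasonable elaboration of what the paper leaves implicit. One small caveat: your stated form of that analogue (that $P_{\lambda}(G') = P_{\mu}(G')$ already gives an R-Levi witness in $G$) is most cleanly justified not by inferring $P_{\lambda}(G) = P_{\mu}(G)$ directly, but by passing through the standard fact that any R-Levi subgroup of $P_{\lambda}(G')$ containing $H$ is of the form $L_{u\cdot\lambda}(G')$ with $u \in R_u(P_{\lambda}(G'))(k) \subseteq R_u(P_{\lambda}(G))(k)$, so that $u\cdot\lambda$ works in $G$ as well; this is the same device used inside Lemma~\ref{lem:mainthm-rational-nondegenerate}. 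With that understanding, your argument is correct and matches the paper's.
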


\begin{proof} The proof of Theorem~\ref{THM:MAIN} begins by reducing to the case that $G = N = MZ(K^{\circ})^{\circ}$, where $M$ is a reductive subgroup guaranteed by Lemma~\ref{lem:usefulsub}. Using Lemma~\ref{lem:usefulsub-rational}, the same reduction holds in this rational setting, since none of the arguments require special properties of the field $k$, only that the groups and quotient maps involved are defined over $k$. So under this reduction, $Y_k(G) = Y_k(G^{\circ}) = Y_k(K^{\circ}) = Y_k(K)$, so that $H$ is relatively $G$-cr over $k$ with respect to $K$ if and only if $H$ is $G$-cr over $k$. Moreover after reduction to this case, the group $C^{\circ} = Z(K^{\circ})^{\circ}$ is a torus, and so the required conclusion follows from Lemma~\ref{lem:mainthm-rational-nondegenerate}.
\end{proof}

\begin{remark}
Note that while $G$, $K$, $N$ and $C$ all need to be $k$-defined for the above proofs to work, we do not make any such assumption on $H$.
\end{remark}

\subsection{Rational analogues of Theorem~\ref{THM:GEOMETRIC} and corollaries}
In order to generalise Theorem~\ref{THM:GEOMETRIC} to arbitrary fields,
we need a notion of a ``closed orbit'' for a group $M(k)$ of $k$-points
of a reductive $k$-group $M$ acting on a $k$-variety.
The correct notion for us is as follows, \cite[Definition~3.8]{Bate2013}:

\begin{definition}\label{defn:cocharclosed}
Let $M$ be a reductive $k$-group and let $V$ be an $M$-variety defined over $k$.
Let $v \in V$. We say that the $M(k)$-orbit $M(k)\cdot v$ is
\emph{cocharacter-closed over $k$} if for
any $\lambda \in Y_k(M)$ such that
$v' = \lim_{a\to 0}\lambda(a)\cdot v$ exists, $v'$ is $M(k)$-conjugate
to $v$.
\end{definition}

The usefulness of this notion is shown by the following 
characterisation of relative 
$G$-complete reducibility over $k$ in terms of 
cocharacter-closure of a rational orbit of a generic tuple.
Combining 
\cite[Corollary~5.3]{Bate2017}, \cite[Theorem~4.12 (iii)]{Bate2011}, and 
\cite[Theorem 9.3]{Bate2017},
we obtain the following rational version of Theorem~\ref{thm:gcr_gen_tup}. For $K = G$ this is just \cite[Theorem 9.3]{Bate2017}.

\begin{theorem}
\label{thm:crvscocharclosed}
Let $K \le G$ be reductive algebraic $k$-groups.
Let $H$ be a subgroup of $G$ and let $\mathbf{h}\in H^n$ 
be a generic tuple of $H$.
Then $H$ is relatively $G$-completely reducible over $k$ with respect to $K$ 
if and only if
$K(k) \cdot \mathbf{h}$ is cocharacter-closed over $k$.
\end{theorem}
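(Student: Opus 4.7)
The proof will simply piece together the three cited results, and the parenthetical remark suggests the overall shape. For the absolute case $K=G$ the assertion is \cite[Theorem 9.3]{Bate2017}, so what remains is to pass from the absolute to the relative setting while keeping track of rationality. The plan is to translate both sides of the equivalence into statements about the single point $\mathbf{h}$ under cocharacters in $Y_k(K)$, and then to invoke the absolute theorem inside a suitable R-Levi subgroup.

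First I would reduce to $k$-defined cocharacters of $K$ on both sides. On the left-hand side, \cite[Lemma~4.8]{Bate2011} shows that relative $G$-complete reducibility over $k$ with respect to $K$ can be tested using only $\lambda \in Y_k(K)$; on the right-hand side Definition~\ref{defn:cocharclosed} already uses only $Y_k(K)$. Next I would translate the group-theoretic condition into a pointwise condition on the generic tuple: since $\mathbf{h}$ generates the same associative subalgebra as $H$, the limit $\lim_{a\to 0}\lambda(a)\cdot\mathbf{h}$ exists in $G^n$ precisely when $H \le P_\lambda$, and in that case equals $c_\lambda(\mathbf{h})$, which is itself a generic tuple for $c_\lambda(H) \le L_\lambda$. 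Under these translations, the left-hand condition becomes: for every $\lambda \in Y_k(K)$ for which the limit exists, there is a $\mu \in Y_k(K)$ with $P_\mu = P_\lambda$ such that both $\mathbf{h}$ and $c_\lambda(\mathbf{h})$ lie in $L_\mu^n$; the right-hand condition becomes: for every such $\lambda$, $c_\lambda(\mathbf{h})$ is $K(k)$-conjugate to $\mathbf{h}$.

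The heart of the argument is to match these two pointwise conditions. This is exactly the role of \cite[Theorem~4.12(iii)]{Bate2011} together with \cite[Corollary~5.3]{Bate2017}: the former provides the generic-tuple characterisation of relative $G$-complete reducibility from which this dictionary between ``$H \le L_\mu$'' and ``$c_\lambda(\mathbf{h})$ lies in a prescribed $K$-orbit'' is drawn, and the latter is the rational statement that controls the passage between ``Levi containment via a $k$-rational cocharacter'' and ``$K(k)$-conjugacy of the limit''. With these in hand, the absolute case \cite[Theorem~9.3]{Bate2017}, applied inside $L_\lambda$ (which is itself reductive and defined over $k$), converts a $K(k)$-rational conjugacy of $c_\lambda(\mathbf{h})$ and $\mathbf{h}$ into the existence of a conjugating element in $R_u(P_\lambda)(k)$, and thereby into a cocharacter $\mu \in Y_k(K)$ with $P_\mu = P_\lambda$ and $H \le L_\mu$; the reverse implication is the easier direction.

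The main obstacle will be the rationality step: producing a $k$-defined cocharacter $\mu$ out of a rational $K(k)$-conjugation, or conversely producing a rational conjugating element out of a Levi-containing $\mu \in Y_k(K)$, cannot be done by naively applying the algebraically closed theory (the field of definition of a conjugating element in $R_u(P_\lambda)(\overline{k})$ need not be $k$). This is precisely what necessitates \cite[Corollary~5.3]{Bate2017} and the rational optimality machinery developed in \cite{Bate2017}; once that machinery is invoked, the reductions above assemble the equivalence in essentially the same way that Theorem~\ref{thm:gcr_gen_tup} was used in the algebraically closed setting.
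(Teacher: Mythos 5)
The paper gives no detailed argument for this theorem: it simply states that the result is obtained by combining \cite[Corollary~5.3]{Bate2017}, \cite[Theorem~4.12(iii)]{Bate2011}, and \cite[Theorem~9.3]{Bate2017}, the last being exactly the $K=G$ case. Your proposal cites precisely these three results and sketches a plausible way to assemble them, so the overall approach matches the paper's. One caveat in your elaboration: the role you attribute to \cite[Theorem~9.3]{Bate2017} --- ``applied inside $L_\lambda$'' to convert $K(k)$-conjugacy of $c_\lambda(\mathbf{h})$ and $\mathbf{h}$ into a conjugating element of $R_u(P_\lambda)(k)$ --- is not what that theorem does; Theorem~9.3 is just the absolute ($K=G$) instance of the statement being proved. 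The conversion between $K(k)$-conjugacy of the limit and $R_u(P_\lambda \cap K)(k)$-conjugacy is part of the rational Hilbert--Mumford/optimality package in \cite{Bate2017} (this is the content of Corollary~5.3 there, together with results such as \cite[Lemma~2.5, Corollary~2.6]{Bate2013}), while \cite[Theorem~4.12(iii)]{Bate2011} supplies the rational generic-tuple characterisation of relative $G$-complete reducibility. With that attribution corrected, your sketch is in the same spirit as the paper's citation-based proof.
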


Owing to Theorem~\ref{thm:crvscocharclosed},   
Theorem~\ref{THM:MAIN-RATIONAL} is equivalent to the following rational version of 
Theorem~\ref{THM:GEOMETRIC}.

\begin{theorem} 
\label{THM:GEOMETRIC-RATIONAL}
Let $K \le G$ be reductive algebraic $k$-groups, write $N = N_{G}(K^{\circ})$, $C = C_{G}(K^{\circ})$, and let $\pi \colon N \to N/C$ be the quotient map. Suppose that $N$ and $C$ are $k$-defined, let $\mathbf{h} \in N^{n}$ $(n \ge 1)$ and write $\pi$ also for the map $N^{n} \to (N/C)^{n}$.

Then $K(k) \cdot \mathbf{h}$ is cocharacter-closed over $k$ if and only if $\pi(N)(k) \cdot \pi(\mathbf{h})$ is cocharacter-closed over $k$.
\end{theorem}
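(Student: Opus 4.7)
The plan is to derive the theorem as a formal consequence of Theorem~\ref{THM:MAIN-RATIONAL}, mirroring the argument in the remark immediately after Theorem~\ref{THM:GEOMETRIC} that handled the algebraically closed case. The bridge between the relative complete reducibility formulation and the geometric formulation is Theorem~\ref{thm:crvscocharclosed}, which characterises relative $G$-complete reducibility over $k$ precisely as cocharacter-closedness over $k$ of the orbit of a generic tuple.

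Let $H$ be the closed subgroup of $N$ topologically generated by the entries of $\mathbf{h}$, so that $\mathbf{h}$ is automatically a generic tuple for $H$. Before invoking Theorem~\ref{thm:crvscocharclosed} on both sides, I would first lengthen $\mathbf{h}$ to a tuple $\tilde{\mathbf{h}} \in N^{m}$ by appending finitely many elements of $H$ chosen so that $\pi(\tilde{\mathbf{h}})$ generates the same associative $k$-algebra as $\pi(H)$. Each appended element is a polynomial in the entries of $\mathbf{h}$ in any fixed faithful representation of $G$, so $\tilde{\mathbf{h}}$ is still a generic tuple for $H$, while $\pi(\tilde{\mathbf{h}})$ is a generic tuple for $\pi(H)$.

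Now I would run the following chain of equivalences. By Theorem~\ref{thm:crvscocharclosed} applied in $G$ with respect to $K$, the orbit $K(k) \cdot \tilde{\mathbf{h}}$ is cocharacter-closed over $k$ if and only if $H$ is relatively $G$-completely reducible over $k$ with respect to $K$. By Theorem~\ref{THM:MAIN-RATIONAL} this is in turn equivalent to $\pi(H)$ being $\pi(N)$-completely reducible over $k$, and a second application of Theorem~\ref{thm:crvscocharclosed}, this time in the absolute setting inside the reductive $k$-group $\pi(N)$, translates this back to cocharacter-closedness over $k$ of $\pi(N)(k) \cdot \pi(\tilde{\mathbf{h}})$. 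It then remains to remove the tilde: because conjugation by elements of $G$ and the cocharacter limit operation both commute with polynomial expressions in matrix entries, cocharacter-closedness transfers between $\mathbf{h}$ and $\tilde{\mathbf{h}}$ in $G^{n}$ versus $G^{m}$, and similarly between $\pi(\mathbf{h})$ and $\pi(\tilde{\mathbf{h}})$ in $(N/C)^{n}$ versus $(N/C)^{m}$; stringing these observations together produces the claimed biconditional.

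The main obstacle I anticipate is the lengthening step: verifying that one can simultaneously choose $\tilde{\mathbf{h}}$ generic for $H$ with $\pi(\tilde{\mathbf{h}})$ generic for $\pi(H)$, and confirming that cocharacter-closedness over $k$ is genuinely insensitive to such lengthening on both the $G$ and the $N/C$ sides. Once this transfer lemma is in hand, the proof reduces to the formal chain of equivalences above.
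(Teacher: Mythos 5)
Your proposal is correct and takes essentially the same route as the paper, which handles this by simply declaring Theorem~\ref{THM:GEOMETRIC-RATIONAL} to be equivalent to Theorem~\ref{THM:MAIN-RATIONAL} owing to Theorem~\ref{thm:crvscocharclosed}, in exact parallel with the remark after Theorem~\ref{thm:gcr_gen_tup} in the algebraically closed setting. One simplification worth noting: the lengthening step, which you flag as the main anticipated obstacle, is superfluous here. Since you take $H$ to be topologically generated by the entries of $\mathbf{h}$, the image $\pi(\mathbf{h})$ topologically generates $\pi(H)$, and so $\pi(\mathbf{h})$ is already a generic tuple for $\pi(H)$ by \cite[Remark 5.6]{Bate2013}. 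The chain $K(k)\cdot\mathbf{h}$ cocharacter-closed $\Leftrightarrow$ $H$ relatively $G$-cr over $k$ w.r.t.\ $K$ $\Leftrightarrow$ $\pi(H)$ $\pi(N)$-cr over $k$ $\Leftrightarrow$ $\pi(N)(k)\cdot\pi(\mathbf{h})$ cocharacter-closed then closes directly by two applications of Theorem~\ref{thm:crvscocharclosed} and one of Theorem~\ref{THM:MAIN-RATIONAL}, with no tilde to remove. (The lengthening device from the earlier remark is only needed for the \emph{other} direction of that equivalence, where $H$ is arbitrary and $\mathbf{h}$ a given generic tuple that may not topologically generate $H$.) Even if you kept the lengthening, your removal-of-the-tilde step is cleanest argued not via polynomial expressions in matrix entries but simply by observing that both $\mathbf{h}$ and $\tilde{\mathbf{h}}$ are generic tuples for the same $H$, so Theorem~\ref{thm:crvscocharclosed} makes the two cocharacter-closedness conditions equivalent.
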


Next, Theorem~\ref{THM:MAIN-RATIONAL} allows us to immediately deduce the relative version of \cite[Corollary~9.7]{Bate2017}.
However the result holds without the restriction $H \leq N$.
One implication of the first part holds without the condition on $C_K(H)$, by \cite[Theorem~4.13]{Bate2011}.
If $k'/k$ is an algebraic extension of perfect fields then both implications 
of the first part hold without the condition on $C_K(H)$, by \cite[Theorem~4.14]{Bate2017}.
The next corollary therefore follows at once from \cite[Theorem~5.7]{Bate2017} and Theorem~\ref{thm:crvscocharclosed}.

\begin{corollary}
Let $K \le G$ be reductive algebraic $k$-groups
	and let $H$ be a $k$-defined subgroup of $G$ such that $C_K(H)$ is $k$-defined.
	Then the following hold.
	\begin{enumerate}
		\item For any separable algebraic extension $k'/k$, $H$ is relatively 
		$G$-completely reducible over $k'$ with respect to $K$ if and only if $H$
		is relatively $G$-completely reducible over $k$ with respect to $K$.
		\item For a $k$-defined torus $S$ of $C_K(H)$ let $L=C_K(S)$.
		Then $H$ is relatively $G$-completely reducible over $k$ with respect to $K$
		if and only if $H$ is relatively $G$-completely reducible over $k$ with
		respect to $L$.
	\end{enumerate}
\end{corollary}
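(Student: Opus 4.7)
The plan is to translate both assertions into statements about cocharacter-closedness of a rational orbit on $G^{n}$, and then invoke \cite[Theorem~5.7]{Bate2017} directly. Fix a generic tuple $\mathbf{h} \in H^{n}$ for $H$ (for instance, a tuple of topological generators; see \cite[Remark 5.6]{Bate2013}). Since $\mathbf{h}$ and $H$ generate the same associative subalgebra of $\Mat_{m\times m}(k)$ under a faithful representation, the pointwise stabiliser $C_{K}(\mathbf{h})$ coincides with $C_{K}(H)$, hence is $k$-defined by hypothesis.

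For part (i), Theorem~\ref{thm:crvscocharclosed} applied over $k$ and over $k'$ reduces the claim to the equivalence: $K(k)\cdot\mathbf{h}$ is cocharacter-closed over $k$ if and only if $K(k')\cdot\mathbf{h}$ is cocharacter-closed over $k'$. This is precisely the separable-extension part of \cite[Theorem~5.7]{Bate2017}, applied to the action of $K$ on $G^{n}$; its hypothesis that the stabiliser be $k$-defined is supplied by the identification $C_{K}(\mathbf{h}) = C_{K}(H)$ above. The remarks preceding the statement (that one direction needs no hypothesis on $C_{K}(H)$ by \cite[Theorem~4.13]{Bate2011}, and that both directions hold without it when $k$ and $k'$ are perfect by \cite[Theorem~4.14]{Bate2017}) propagate through this same reduction.

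For part (ii), I would apply Theorem~\ref{thm:crvscocharclosed} twice, once with the reductive $k$-subgroup $K$ and once with $L = C_{K}(S)$. Since $S$ is contained in $C_{K}(H)$, it centralises $H$, so $H \le L$; thus $\mathbf{h}$ is also a generic tuple for $H$ viewed as a subgroup of the reductive $k$-group $L$. The claim thus becomes the equivalence of cocharacter-closedness of $K(k)\cdot\mathbf{h}$ and of $L(k)\cdot\mathbf{h}$ over $k$ in $G^{n}$, and this is the assertion of \cite[Theorem~5.7]{Bate2017} concerning centralisers of $k$-defined tori inside the stabiliser.

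The only real obstacle is the bookkeeping step of matching hypotheses between \cite[Theorem~5.7]{Bate2017} and the present setting, specifically the identification $C_{K}(H) = C_{K}(\mathbf{h})$ that allows one to quote the $k$-defined-stabiliser hypothesis verbatim; once this is in place, the corollary is a direct combination of Theorem~\ref{thm:crvscocharclosed} with \cite[Theorem~5.7]{Bate2017}, and no further geometric input is required.
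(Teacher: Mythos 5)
Your proof is correct and follows the same route as the paper: translate both assertions to cocharacter-closedness of $K(k)\cdot\mathbf{h}$ via Theorem~\ref{thm:crvscocharclosed}, identify the stabiliser $C_K(\mathbf{h})$ with $C_K(H)$ to supply the $k$-definedness hypothesis, and then invoke \cite[Theorem~5.7]{Bate2017}. The paper states this as a one-line deduction; you have merely spelled out the same reduction (your aside in part (ii) about $\mathbf{h}$ being a generic tuple ``for $H$ viewed as a subgroup of $L$'' is unnecessary, since the generic tuple depends only on $H$ as a subgroup of $G$, but it does no harm).
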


We end this section with a geometric consequence of Theorem~\ref{THM:GEOMETRIC-RATIONAL} which 
provides a rational version of Theorem~\ref{THM:geom}.
For that we require the notion of a cocharacter closed subset of a $G$-variety over $k$, \cite[Definition~1.2(a)]{Bate2017}.

\begin{definition}
\label{defn:cocharclosure}
Let $V$ be an affine variety over $k$ on which $G$ acts. 
Given a subset $X$ of $V$, we say that $X$ 
is \emph{cocharacter-closed (over $k$)} if for every $v \in X$ and
every $\lambda \in Y_k(G)$ such that $v' = \lim_{a \to 0} \lambda(a) \cdot v$ 
exists, $v' \in X$.
\end{definition}

Note that this definition coincides with Definition~\ref{defn:cocharclosed} 
 if $X = G(k) \cdot v$ for some $v \in V$.

The cocharacter-closed subsets of $V$ form the closed sets of a topology on $V$
(it is clear that arbitrary intersections and unions of cocharacter-closed sets are cocharacter-closed,
and that the empty set and the whole space $V$ are cocharacter-closed), cf.~\cite[Remark~3.1(iii)]{Bate2017}.
It is this topology which is used in our rational version of Theorem~\ref{THM:geom}.

\begin{theorem}
\label{THM:rational-geom}
Let $K \le G$ be reductive algebraic $k$-groups, write $N = N_{G}(K^{\circ})$, $C = C_{G}(K^{\circ})$, and let $\pi \colon N \to N/C$ be the quotient map. Let $\mathbf{h} \in N^{n}$ $(n \ge 1)$ and write $\pi$ also for the map $N^{n} \to (N/C)^{n}$. Suppose that $N$ and $C$ are $k$-defined.
Then the following are equivalent.
\begin{enumerate}
	\item $K(k) \cdot \mathbf{h}$ is cocharacter-closed over $k$  in $G^{n}$. \label{rat-geom-i}
	\item $\pi(K(k)) \cdot \pi(\mathbf{h})$ is cocharacter-closed over $k$ in $(N/C)^{n}$. \label{rat-geom-ii}
	\item $\pi^{-1}(\pi(K(k) \cdot \mathbf{h}))$ is cocharacter-closed  over $k$ in $G^{n}$. \label{rat-geom-iii}
	\item Every $K(k)$-orbit on $\pi^{-1}(\pi(K(k) \cdot \mathbf{h}))$ is cocharacter-closed over $k$ in $G^{n}$. \label{rat-geom-iv}
\end{enumerate}
\end{theorem}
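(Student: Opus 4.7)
The plan is to mimic the proof of the non-rational Theorem~\ref{THM:geom}, replacing the Zariski-topological inputs by their cocharacter-theoretic analogues and using the rational structural refinements from the previous subsection. The equivalence (i) $\Leftrightarrow$ (ii) is an immediate application of Theorem~\ref{THM:GEOMETRIC-RATIONAL} to the tuple $\mathbf{h}$.

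For (i) $\Leftrightarrow$ (iv), I would observe that every $v \in \pi^{-1}(\pi(K(k)\cdot\mathbf{h}))$ satisfies $\pi(v) \in \pi(K(k))\cdot\pi(\mathbf{h})$, whence the $\pi(K(k))$-orbit of $\pi(v)$ coincides with that of $\pi(\mathbf{h})$. Applying the already-established equivalence (i) $\Leftrightarrow$ (ii) with $v$ in place of $\mathbf{h}$ then shows that $K(k)\cdot v$ is cocharacter-closed over $k$ precisely when $K(k)\cdot\mathbf{h}$ is, independently of the choice of $v$. The implication (iv) $\Rightarrow$ (iii) is now formal: $\pi^{-1}(\pi(K(k)\cdot\mathbf{h}))$ is a union of $K(k)$-orbits, and by Definition~\ref{defn:cocharclosure} an arbitrary union of cocharacter-closed subsets is again cocharacter-closed.

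The remaining implication (iii) $\Rightarrow$ (ii) is the main obstacle. Given $\mu \in Y_k(\pi(K))$ and $w \in \pi(K(k))\cdot\pi(\mathbf{h})$ such that $w' = \lim_{a \to 0}\mu(a)\cdot w$ exists, the strategy is to lift both the cocharacter and the point. With $M$ as in Lemma~\ref{lem:usefulsub-rational}, Lemma~\ref{lem:k-cochar-morphism} supplies a positive integer $m$ and a cocharacter $\nu \in Y_k(M) \subseteq Y_k(K)$ with $m\mu = \pi \circ \nu$; replacing $\mu$ by $m\mu$ leaves the limit $w'$ unchanged. Choose any $v \in N^n$ with $\pi(v) = w$. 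Because $\nu \in Y_k(M)$, Proposition~\ref{prop:usefulsub} yields the identity $\pi^{-1}(P_{\pi \circ \nu}) = P_\nu \cap N$, so the existence of $w'$ in $(N/C)^n$ forces $v \in (P_\nu \cap N)^n$, and therefore $v' = \lim_{a \to 0}\nu(a)\cdot v$ exists in $G^n$ with $\pi(v') = w'$. Since $v \in \pi^{-1}(\pi(K(k)\cdot\mathbf{h}))$ and this set is cocharacter-closed by (iii), we conclude $v' \in \pi^{-1}(\pi(K(k)\cdot\mathbf{h}))$, and hence $w' = \pi(v') \in \pi(K(k))\cdot\pi(\mathbf{h})$, as required.

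The hard part is exactly this last step: cocharacter-closedness in the quotient $(N/C)^n$ is not visible from cocharacter-closedness in $G^n$ without both a lifting of cocharacters of $\pi(K)$ through the isogeny $M \to N/C$ and a transfer of the limit-existence criterion ``$v \in P_\lambda^n$'' between $N^n$ and $(N/C)^n$; these ingredients are supplied by Lemma~\ref{lem:k-cochar-morphism} and Proposition~\ref{prop:usefulsub} respectively, and they are the key to bridging the two settings.
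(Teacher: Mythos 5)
Your argument is correct, and it is a genuinely different route from the paper's. Both proofs obtain (i)$\Leftrightarrow$(ii) from Theorem~\ref{THM:GEOMETRIC-RATIONAL}, but they then diverge. For (i)$\Leftrightarrow$(iv) the paper first reduces to the case $K = K^{\circ}$ via \cite[Corollary~3.5]{Bate2017}, writes out $\pi^{-1}(\pi(K(k)\cdot\mathbf{h}))$ explicitly as a union of $C^{n}$-translates of the orbit $K(k)\cdot\mathbf{h}$, and observes that translation by $C^{n}$ is a $k$-variety automorphism commuting with the $K$-action; you instead apply the already-established equivalence (i)$\Leftrightarrow$(ii) to each point $v$ of the preimage, noting that $\pi(K(k))\cdot\pi(v)=\pi(K(k))\cdot\pi(\mathbf{h})$, so all the $K(k)$-orbits in the preimage are simultaneously cocharacter-closed or not. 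This is a clean shortcut that avoids the reduction to connected $K$ and the translation bookkeeping. For (ii)$\Leftrightarrow$(iii) the paper simply asserts that the cocharacter-closure topology on $(N/C)^{n}$ is the quotient of that on $N^{n}$, whereas you prove the nontrivial direction (iii)$\Rightarrow$(ii) explicitly, using Lemma~\ref{lem:k-cochar-morphism} to lift the destabilising cocharacter of $N/C$ (up to replacing it by a suitable multiple, so the limit and the pair $(P,L)$ are unaffected) to some $\nu\in Y_{k}(M)\subseteq Y_{k}(K)$, and Proposition~\ref{prop:usefulsub} to transfer the limit-existence condition between $N$ and $N/C$. Your more explicit version makes visible exactly why the quotient-topology claim holds, which the paper leaves to the reader. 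The remaining step (iv)$\Rightarrow$(iii), that an arbitrary union of cocharacter-closed sets is cocharacter-closed, is stated in the text following Definition~\ref{defn:cocharclosure}, so that is fine. In short: correct, complete, and arguably a tidier organisation of the implications than the one in the paper.
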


\begin{proof} Thanks to Theorem~\ref{THM:GEOMETRIC-RATIONAL}, \ref{rat-geom-i} and \ref{rat-geom-ii} are equivalent. Since $N^{n}$ is closed in $G^{n}$ 
and stabilised by $K$, it is cocharacter-closed for the action of $K$ by \cite[Remark~3.1(ii)]{Bate2017}.
Further, since $\pi(K(k))\cdot \pi(\mathbf{h}) = \pi(K(k) \cdot \mathbf{h})$ and the topology on $(N/C)^{n}$ is the quotient topology afforded by the cocharacter-closed subsets of $N$, it follows that \ref{rat-geom-ii} and \ref{rat-geom-iii} are equivalent. 

Clearly, \ref{rat-geom-iv} implies \ref{rat-geom-i}. Now suppose \ref{rat-geom-i}.
Thanks to \cite[Corollary~3.5]{Bate2017},
$K(k) \cdot \mathbf{h}$ is cocharacter-closed over $k$ if and only if $K^\circ(k) \cdot \mathbf{h}$ is cocharacter-closed over $k$. Thus we may assume that $K$ is connected.
 Writing $\mathbf{h} = (h_1,\ldots,h_n)$ we have
\begin{align*}
\pi^{-1}(\pi(K(k) \cdot \mathbf{h})) 
&= \left\{ \left((x \cdot h_1)c_1,\ldots, (x \cdot h_n)c_n\right) \ssep x \in K(k),\ c_{i} \in C \right\},\\
&= \left\{ \left(x \cdot (h_1c_1),\ldots, x \cdot (h_nc_n)\right) \ssep x \in K(k),\ c_{i} \in C \right\},
\end{align*}
where the last equality follows again since
$K = K^\circ$ centralises $C$.
Thus $\pi^{-1}(\pi(K(k) \cdot \mathbf{h}))$ is a union of $K(k)$-orbits, 
each of which is a translate of the cocharacter-closed $K(k)$-orbit  
$K(k) \cdot \mathbf{h}$ by
an element of $C^{n}$.
As translation is a $k$-variety automorphism $G^{n} \to G^{n}$, 
it follows that every $K(k)$-orbit on $\pi^{-1}(\pi(K(k) \cdot \mathbf{h}))$ is cocharacter-closed in $G^{n}$,
as desired.
\end{proof}

\bigskip

Acknowledgments: 
The second author acknowledges support from the Alexander von Humboldt Foundation.
The authors wish to thank Ben Martin for helpful comments on early versions of the paper.

\end{document}